\newtheorem{thm}{Theorem}[section]
\newtheorem{Lemma}[thm]{Lemma}
\newtheorem{remark}{Remark}[section]
\newtheorem{theorem}[thm]{Theorem}
\newtheorem{proposition}[thm]{Proposition}
\numberwithin{equation}{section}
\newcommand{\beq}{\begin{equation}}
\newcommand{\eeq}{\end{equation}}
\newcommand{\ben}{\begin{eqnarray}}
\newcommand{\een}{\end{eqnarray}}
\newcommand{\beno}{\begin{eqnarray*}}
\newcommand{\eeno}{\end{eqnarray*}}
\newcommand{\bb}{\mathbf{b}}
\newcommand{\bv}{\mathbf{v}}
\newcommand{\bh}{\mathbf{h}}
\newcommand{\bF}{\mathbf{F}}
\newcommand{\bJ}{\mathbf{J}}
\newcommand{\bG}{\mathbf{G}}
\newcommand{\bE}{\mathbf{E}}
\newcommand{\bB}{\mathbf{B}}
\newcommand{\bH}{\mathbf{H}}
\newcommand{\by}{\mathbf{y}}
\newcommand{\bx}{\mathbf{x}}
\numberwithin{equation}{section}
\begin{document}
\title[Ideal incompressible MHD equations]{Low regularity solutions for the Cauchy problem of the ideal incompressible Magnetohydrodynamics equations}

\subjclass[2010]{Primary 35Q35, 35R05, 35A01}

\author{Huali Zhang}
\address{Hunan University, Lushan South Road in Yuelu District, Changsha, 410882, People's Republic of China.}

\email{hualizhang@hnu.edu.cn}

\date{\today}

\keywords{ideal incompressible MHD equations, low regularity solutions, degenerate wave-elliptic system, null structure, bilinear estimate.}

\begin{abstract}
	In Lagrangian coordinates, the local well-posedness of low regularity solutions is established for an ideal incompressible magnetohydrodynamic (MHD) system subject to a homogeneous background magnetic field. First, the MHD system is reformulated into a degenerate wave-elliptic system with a particular null structure. By introducing a suitably defined solution space, several refined product estimates are derived. Next, using the inherent null structure, a Klainerman-Machedon type bilinear estimate is obtained for the nonlinear terms. These nice structures and estimates yield the local well-posedness of the ideal incompressible MHD equations in Lagrangian coordinates for initial velocity fields $\bv_0 \in H^{s}(\mathbb{R}^n)$ with $s > \frac{n+1}{2}$ $(n=2,3,4)$. Moreover, the regularity requirement is lowered by half a derivative compared with the classical exponent $s > \frac{n}{2}+1$.

\end{abstract}

\maketitle
\section{Introduction}
Magnetohydrodynamics (MHD) studies the dynamic interplay between velocity and magnetic fields within electrically conducting fluids, including plasmas, liquid metals, seawater, and electrolytes. This paper focuses on establishing the well-posedness of low regularity solutions to the Cauchy problem for the ideal MHD system expressed in Lagrangian coordinates. To introduce the problem, we first present the incompressible MHD system in its Eulerian formulation, given by
\begin{equation}\label{MHD}
	\begin{cases}
		& \partial_t \bar{\bv}  + (\bar{\bv} \cdot \nabla) \bar{\bv}- (\bar{\bb} \cdot \nabla) \bar{\bb}+\nabla \bar{q}=0,\quad (t,\bx)\in \mathbb{R}^+\times \mathbb{R}^n,
		\\
		& \partial_t \bar{\bb} + (\bar{\bv} \cdot \nabla) \bar{\bb} - (\bar{\bb} \cdot \nabla) \bar{\bv} =0,\qquad \quad \ \ (t,\bx)\in \mathbb{R}^+\times \mathbb{R}^n,
		\\
		& \text{div}\bar{\bv}=\text{div}\bar{\bb}=0, \qquad \qquad \qquad \qquad \quad \ \ (t,\bx)\in \mathbb{R}^+\times \mathbb{R}^n,
		\\
		& (\bar{\bv}, \bar{\bb})|_{t=0}=(\bar{\bv}_0, \bar{\bb}_0),  \qquad  \qquad \qquad  \qquad \quad  \bx\in \mathbb{R}^n,
	\end{cases}
\end{equation}
where $\bar{\bv}=(\bar{v}^1,\bar{v}^1,\cdots,\bar{v}^n)^{\mathrm{T}}$ denotes the velocity field, $\bar{\bb}=(\bar{b}^1,\bar{b}^1,\cdots,\bar{b}^n)^{\mathrm{T}}$ the magnetic field, and $\bar{q}$ the pressure. $\bar{\bv}_0$ and $\bar{\bb}_0$ are the initial data which satisfy the divergence free condition
\begin{equation*}
	\mathrm{div}\bar{\bv}_0=\mathrm{div} \bar{\bb}_0=0, \quad \bx\in \mathbb{R}^n.
\end{equation*}
Next, we introduce the Lagrangian trajectory $\bx(t,\by)=(x^1,x^2,\cdots,x^n)^{\text{T}}$ by
\begin{equation}\label{Lag2}
	\begin{cases}
		&\frac{d}{dt} x^i(t,\by)=\bar{v}^i(t,\bx(t,\by)),
		\\
		&x^i(0,\by)=y^i.
	\end{cases}
\end{equation} 
Define
\begin{equation}\label{Lag3}
	\begin{split}
		& b^i(t,\by)=\bar{b}^i(t,\bx(t,\by)),\qquad  v^i(t,\by)=\bar{v}^i(t,\bx(t,\by)), \qquad q(t,\by)=\bar{q}(t,\bx(t,\by)),
	\end{split}
\end{equation} 
and
\begin{equation}\label{Lag4}
	\begin{split}
		H^{ia}=\frac{\partial x^i}{\partial y^a}.
	\end{split}
\end{equation} 
Based on \eqref{MHD}-\eqref{Lag4}, when $\bb_0$ is a constant vector, the equations \eqref{MHD} can be written by (see Lemma \ref{Lagm} for details)  
\begin{equation}\label{Lag5}
	\begin{cases}
		&\frac{\partial^2 H^{ia}}{\partial t^2}-b_0^k b_0^j \frac{\partial^2 H^{ia}}{\partial y^k y^j}+ \frac{\partial}{\partial y^a} (\partial_{x_i} q)=0,
		\\
		& H^{ia}|_{t=0}=\delta^{ia}, \quad \frac{\partial H^{ia}}{\partial t}|_{t=0}=\frac{\partial v^i_0}{\partial y^a},
	\end{cases}
\end{equation}
where $q$ and $\bH=(H^{ia})_{n\times n}$ satisfy\footnote{Here $\bH^{-1}$ is a inverse matrix of $\bH$. The operator $\Delta_x$ is denoted by $\Delta_x=\sum^n_{i=1}\frac{\partial^2 }{ \partial x^i \partial x^i}$. }
\begin{align}\label{Lag5A}
	\Delta_x {q}
	=& (\bH^{-1})^{ki} (\bH^{-1})^{mj} \big(  \frac{\partial H^{jk} }{\partial t}   \frac{\partial H^{im}}{\partial t}  
	-b_0^l b_0^r \frac{\partial H^{jk}}{\partial y^l}  \frac{\partial H^{im}}{\partial y^r} \big),
	\\\label{Lag5B}
	(\bH^{-1})^{ia} = & \frac{1}{(n-1)!} \epsilon_{i i_2 i_3 \cdots i_n }\epsilon^{a a_2 a_3 \cdots a_n } H^{i_2 a_2}  \cdots H^{i_n a_n}.
\end{align}
Let
\begin{equation*}
	\bB=
	\left(
	\begin{array}{cccc}
		b_0^1 b_0^1 & b_0^1 b_0^2 & \cdots & b_0^1 b_0^n  \\
		b_0^1 b_0^2 & b_0^2 b_0^2 & \cdots & b_0^2 b_0^n \\
		\cdots & \cdots & \cdots & \cdots \\
		b_0^1 b_0^n & b_0^2 b_0^n & \cdots & b_0^n b_0^n
	\end{array}
	\right )
\end{equation*}
denote the coefficient matrix of the second term in equation \eqref{Lag5}. When $\bb_0 \neq \mathbf{0}$, $\bB$ is a positive semi-definite matrix of rank one. Consequently, the coupled system \eqref{Lag5}--\eqref{Lag5B} forms a degenerate wave--elliptic system in $n \geq 2$ spatial dimensions. In this work, we investigate the local well-posedness of this system under very low regularity assumptions on the initial data. Before presenting our main results, we briefly recall relevant prior contributions in the literature.

\subsection{Previous results and Motivation}
In the case of $\bb=0$, then \eqref{MHD} is reduced to the classical incompressible Euler equations 
\begin{equation}\label{IEE}
	\begin{cases}
		& \partial_t \bar{\bv} + (\bar{\bv} \cdot \nabla) \bar{\bv}+\nabla \bar{q}=0,
		\\
		& \text{div}\bar{\bv}=0, 
		\\
		& \bar{\bv}|_{t=0}=\bar{\bv}_0. 
	\end{cases}
\end{equation}
Kato and Ponce \cite{KP} proved the local well-posedness of \eqref{IEE} when the initial velocity $\bar{\mathbf{v}}_0$ belongs to the Sobolev space $W^{s,p}(\mathbb{R}^n)$ with $s > 1 + \frac{n}{p}$. Chae \cite{Chae} established the local existence of solutions by considering initial data $\bar{\mathbf{v}}_0$ in Triebel-Lizorkin spaces. Conversely, Bourgain and Li \cite{BL, BL2} demonstrated that the Cauchy problem is ill-posed for initial velocities $\bar{\mathbf{v}}_0 \in W^{1 + \frac{n}{p}}(\mathbb{R}^n)$, where $1 \leq p < \infty$ and $n = 2,3$. Recently, Kim and Jeong \cite{KJ} proved the ill-posedness of \eqref{IEE} when $\bar{\bv}_0 \in H^{\frac{n}{2}+1}(\mathbb{R}^n)$, $n\geq 3$.

Interestingly, the well-posedness theory differs in the context of elastic materials. For incompressible Neo-Hookean elastic equations, Andersson and Kapitanski \cite{AK} derived a first-order half-wave system for the vorticity in Lagrangian coordinates and established Strichartz estimates for the velocity and the elastic tensor. Under additional H\"older regularity assumptions on the vorticity, they proved well-posedness for low-regularity solutions in both Lagrangian and Eulerian coordinates, requiring $s > \frac{7}{4}$ for $n=2$ or $s > 2$ for $n=3$. By finding a ``wave map" null structure embedded in the equations, Zhang \cite{Zhang} obtained a refined well-posedness result for incompressible Neo-Hookean elastic equations. In the Eulerian framework, the regularity requirement remains $s > \frac{7}{4}$ for $n=2$ or $s > 2$ for $n=3$. For general initial data in Lagrangian coordinates, the condition relaxes to $s > \frac{n+1}{2}$ for $n=2,3$; and for small initial data in Lagrangian coordinates, it further reduces to $s > \frac{n}{2}$ for $n=2,3$. Similarly, it is also of great interest to study the low regularity problem for the ideal incompressible MHD system.

The general form of the MHD system is described by the equations
\begin{equation}\label{MHD0}
	\begin{cases}
		& \partial_t \bar{\bv} -\nu \Delta \bar{\bv} + (\bar{\bv} \cdot \nabla) \bar{\bv}- (\bar{\bb} \cdot \nabla) \bar{\bb}+\nabla \bar{q}=0,\quad (t,\bx)\in \mathbb{R}^+\times \mathbb{R}^n,
		\\
		& \partial_t \bar{\bb} -\mu \Delta \bar{\bb} + (\bar{\bv} \cdot \nabla) \bar{\bb} - (\bar{\bb} \cdot \nabla) \bar{\bv} =0,\qquad \quad \ \ (t,\bx)\in \mathbb{R}^+\times \mathbb{R}^n,
		\\
		& \text{div}\bar{\bv}=\text{div}\bar{\bb}=0, \qquad \qquad \qquad \qquad \qquad \qquad \ \ \ (t,\bx)\in \mathbb{R}^+\times \mathbb{R}^n,
		\\
		& (\bar{\bv}, \bar{\bb})|_{t=0}=(\bar{\bv}_0, \bar{\bb}_0),  \qquad  \qquad \qquad  \qquad \qquad \qquad  \bx\in \mathbb{R}^n,
	\end{cases}
\end{equation}
where the viscosity $\nu\geq 0$ and magnetic diffusivity $\mu\geq 0$ are constants. There is an extensive literature on the progress of local and global well-posedness problems for \eqref{MHD0}. In the fully dissipative case $\nu > 0$ and $\mu > 0$, it is well known
that the classical solution in two dimensions is global in time, and the weak solution is regular and unique. However, the global existence of classical solutions and the regularity of weak solutions remain challenging open problems for the three-dimensional MHD system. For more details, please refer to Sermange and Temam's paper \cite{ST}. In the non-dissipative but resistive case $\nu = 0,\ \mu > 0$, Lei and Zhou \cite{LZw} proved the global existence of weak solutions for the two-dimensional system. Furthermore, under mixed partial regularity assumptions, global well-posedness can also be obtained by Cao and Wu \cite{CW}.

In the non-resistive case $\mu = 0,\ \nu > 0$, Jiu and Niu \cite{JN} established local existence in 2D for initial data in $H^s$ with integer $s \geq 3$. Later, Fefferman et al. \cite{Feff1} proved local well-posedness for $(\bb_0,\bv_0) \in H^s(\mathbb{R}^n)$ with $s > \frac{n}{2}$. By deriving a new maximal regularity property of the heat equation, Fefferman et al. \cite{Feff} further lowered the regularity requirement to $(\bb_0,\bv_0) \in H^s(\mathbb{R}^n) \times H^{s-1+\varepsilon}(\mathbb{R}^n)$ for $s > \frac{n}{2}$ and any $0 < \varepsilon < 1$. Recently, Chen-Nie-Ye \cite{CNY} showed that the problem is ill-posed when $(\bb_0,\bv_0) \in H^{n/2}(\mathbb{R}^n) \times H^{n/2-1}(\mathbb{R}^n)$ for $n \geq 2$, thus settling the sharp regularity threshold in Sobolev spaces for the incompressible non-resistive MHD system. When the problem is considered in Besov spaces, the situation changes slightly. Chemin et al. \cite{Chemin} obtained local existence for $(\bb_0,\bv_0) \in B^{n/2}_{2,1}(\mathbb{R}^n) \times B^{n/2-1}_{2,1}(\mathbb{R}^n)$ with $n = 2,3$. Subsequently, Li-Tan-Yin \cite{LTY} and Li-Yin-Zhu \cite{LYZ} established well-posedness under the condition $(\bb_0,\bv_0) \in \dot{B}^{n/p}_{p,1}(\mathbb{R}^n) \times \dot{B}^{n/p-1}_{p,1}(\mathbb{R}^n)$ for $1 \leq p \leq 2n$ and $n \geq 2$. Regarding global solutions, especially for small smooth initial data, we refer to the insightful result of Lin and Zhang \cite{LZ}. Motivated by \cite{LZ}, a number of interesting works have been devoted to the global well-posedness of the non-resistive incompressible MHD system; see, for example, \cite{LXZ} and \cite{XZ}.

In the case $\mu = \nu = 0$, system \eqref{MHD0} reduces to the ideal incompressible MHD equations \eqref{MHD}. The global well-posedness of strong small solutions has been investigated by many authors in various settings; see, for example, Bardos-Sulem-Sulem \cite{BB}, He-Xu-Yu \cite{HXY}, Cai-Lei \cite{CL}, and Wei-Zhang \cite{WZ}. Recently, Faraco et al. \cite{Fa} studied weak solutions of the ideal MHD system. For local strong solutions, Schmidt \cite{Schmidt} and Secchi \cite{Sec} established local existence when the initial data belong to $H^s(\mathbb{R}^n)$ with $s > 1 + \frac{n}{2}$. Chen, Miao, and Zhang \cite{CMZ} proved local well-posedness in Triebel-Lizorkin spaces, while Miao and Yuan \cite{MY} obtained corresponding results in Besov spaces. Very recently, Ifrim, Pineau, Tataru, and Taylor \cite{IBTT} established sharp regularity results for the free boundary problem of the ideal MHD system in Eulerian coordinates.
\subsection{Motivation}
Despite recent advances in higher regularity settings and related free boundary problem, the Cauchy problem for the ideal MHD system remains unsolved at low regularity.

The basic energy estimates of system \eqref{MHD} gives
\begin{equation*}
	\|(\bv,\bb)\|_{L^\infty_t H^s} \lesssim \|(\bv_0,\bb_0)\|_{H^s} \exp\left( \int^t_0 \| (\nabla\bv, \nabla\bb)\|_{L^\infty_x} d\tau \right).
\end{equation*}
Therefore, existing well-posedness results \cite{CMZ,MY,Schmidt,Sec} rely on the Sobolev imbedding $H^{\frac{n}{2}+} \hookrightarrow  L^\infty$, which permits the estimate
\begin{equation*}
	\|(\nabla\bv, \nabla\bb)\|_{L^\infty_x} \lesssim \|(\bv,\bb)\|_{H^s},\quad s>\frac{n}{2}+1.
\end{equation*}
However, for regularity indices $s \leq \frac{n}{2}+1$, the quantities $\|\nabla\bv\|_{L^\infty_x}$ and $\|\nabla\bb\|_{L^\infty_x}$ can no longer be bounded by the $H^s$-norm of the solution. This loss of pointwise control over gradients introduces fundamental analytical difficulties in closing energy estimates, leaving the well-posedness theory in this range an open problem.

From \cite{BB,CL,HXY}, it is known that the ideal MHD system \eqref{MHD} can be reformulated as a one-dimensional quasilinear wave system in Eulerian coordinates. In the context of low regularity solutions for quasilinear wave equations, Strichartz estimates are a fundamental tool (see the references \cite{KR2} and \cite{SmT} for details). A natural approach would be to bound a quantity such as $\|(\nabla\bv, \nabla\bb)\|_{L_t^1L^\infty_x}$, which has been successfully carried out for related systems such as the incompressible neo-Hookean elastic equations \cite{AK, Zhang}. To the best of our knowledge, no Strichartz estimates are available for the one-dimensional wave equation, which creates a major trouble for studying low regularity solutions of \eqref{MHD} in Eulerian coordinates.


In our paper, a crucial observation is that when the ideal MHD system \eqref{MHD} is expressed in Lagrangian coordinates, it reduces to a one-dimensional semilinear wave system with a specific null structure--precisely, equations \eqref{Lag5}--\eqref{Lag5B}. In this reformulation, the principal difficulty in the Eulerian setting--the need to control $\|(\nabla\bv, \nabla\bb)\|_{L^1_tL^\infty_x}$ is circumvented. Indeed, we introduce a carefully chosen solution space $H^{s-1,1}_{\theta}$ (see \eqref{HS1} and \eqref{HS2} below), defined through a norm that respects both the one-dimensional wave operator and the null structure of the nonlinearities. Using the reformulated system \eqref{Lag5}--\eqref{Lag5B} and the variable substitution $\bG=\bH-\bE$ ($\bE$ is an identity matrix), we can derive a new energy estimate (see Section \ref{PT} for details)
\begin{equation*}
	\begin{split}
		| \bG |_{H^{s-1,1}_{\theta}} \lesssim \|(\bv_0,\bb_0)\|_{H^s}+ T^{\frac{\varepsilon}{2}}(1+ | \bG |^{n^2+3n+1}_{H^{s-1,1}_{\theta}}) \|Q_0(\bG,\bG)\|_{H^{s-1,0}_0},
	\end{split}
\end{equation*}
and prove the bilinear estimate (see Lemma \ref{eQR} for details)
\begin{equation*}
\|Q_0(\bG,\bG)\|_{H^{s-1,0}_0} \lesssim | \bG |^2_{H^{s-1,1}_{\theta}}.
\end{equation*}
This nice structure and estimates strongly motivate the study of low regularity well-posedness for the ideal incompressible MHD system in Lagrangian coordinates. Specifically, we establish the local well-posedness of low regularity solutions to \eqref{Lag5}--\eqref{Lag5B} under the assumptions $\mathbf{v}_0 \in H^s(\mathbb{R}^n)$ with $s > \frac{n+1}{2}$, for $n = 2, 3, 4$, and $\mathbf{b}_0 $ being a nonzero constant vector. This represents the first well-posedness result for the ideal MHD equations in a regularity regime below the classical exponent $s>\frac{n}{2}+1$, and serves as a key step toward identifying the sharp regularity required for the Cauchy problem. Our main results are stated as follows.



\subsection{Statement of the result}
\begin{theorem}\label{thm}
	Assume that $\frac{n+1}{2}<s\leq \frac{n}{2}+1$ and ${2 \leq n \leq 4 } $. Suppose that the initial data satisfies $\bv_0\in H^s(\mathbb{R}^n)$ and $\bb_0$ a constant vector ($\bb_0\neq \mathbf{0}$). Then there exists $T>0$ depending only on $n, s$ and $\| \bv_0\|_{H^s}$ such that the system \eqref{Lag5}--\eqref{Lag5B} admits a unique solution with
	\begin{equation*}
		\bH-\bE \in C([0,T],H^{s-1,1}(\mathbb{R}^n))\cap C^1([0,T],H^{s-1,0}(\mathbb{R}^n)),
	\end{equation*}
	 where $\bE$ is a $n\times n$ identity matrix. Moreover, the velocity and magnetic fields satisfy
	 \begin{equation*}
	 	\bv\in C([0,T],H^s(\mathbb{R}^n)),\quad \bb\in C([0,T],H^s(\mathbb{R}^n)).
	 \end{equation*}
\end{theorem}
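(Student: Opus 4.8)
The plan is to treat \eqref{Lag5}--\eqref{Lag5B} as a semilinear $1$-D wave system for the perturbation $\bF=\bH-\bE$ and to close a Picard iteration in a space adapted to the degenerate wave operator, the gain of $\tfrac12$ derivative coming entirely from a null form hidden in the pressure. Since $\bb_0$ is a nonzero constant vector, after a rotation we may assume $\bb_0=c\,\mathbf{e}_1$ with $c=\abs{\bb_0}$, so that $b_0^kb_0^j\p_{y^k}\p_{y^j}=c^2\p_{y^1}^2$ and, by \eqref{rank}, the principal part of \eqref{Lag5} is the $1$-D wave operator $\Box_c=\p_t^2-c^2\p_{y^1}^2$ acting on functions of $\by\in\mathbb{R}^n$. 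Writing $\bF=\bH-\bE$, the system reads $\Box_c F^{ia}=N^{ia}$ with $F^{ia}|_{t=0}=0$, $\p_tF^{ia}|_{t=0}=\p_{y^a}v_0^i$, and $N^{ia}=-\p_{y^a}(\p_{x_i}q)$. Using the chain rule $\p_{x_i}=(\bH^{-1})^{ai}\p_{y^a}$, equation \eqref{Lag5A} becomes a variable-coefficient elliptic equation for $q$ whose leading operator is a perturbation of $\Delta_\by$ (since $\bH^{-1}=\bE+O(\bF)$ by \eqref{Lag5B}) and whose source is exactly the null form
\beq\label{nullform}
Q^c_0(A,B):=\p_tA\,\p_tB-c^2\p_{y^1}A\,\p_{y^1}B
\eeq
adapted to $\Box_c$. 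Hence $N^{ia}$ has the schematic form $N=\mathcal{R}\big[Q^c_0(\p\bH,\p\bH)\big]+(\text{cubic and higher})$, where $\mathcal{R}=\p_{y}\p_{y}\Delta_x^{-1}$ is a matrix of order-zero operators.

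The second step is to build the contraction. Given $\bF$ in a ball of the solution space, let $\Phi(\bF)=\tilde\bF$ solve the linear problem $\Box_c\tilde F^{ia}=N^{ia}(\bF)$ with the above data. The solution space combines the natural energy norm $\sup_{[0,T]}\big(\norm{\bF}_{H^{s-1}}+\norm{\p_{y^1}\bF}_{H^{s-1}}+\norm{\p_t\bF}_{H^{s-1}}\big)$ with a Klainerman--Machedon wave-Sobolev ($X^{s,\theta}$-type) norm measured against the characteristic set $\tau^2=c^2\xi_1^2$ of $\Box_c$; this is the space for which the solution lies in $C([0,T],H^{s-1,1})\cap C^1([0,T],H^{s-1,0})$. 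The required linear estimates are the standard energy inequality for $\Box_c$, the transfer estimate embedding the wave-Sobolev space into $C_tH^{s-1}_x$, and the Duhamel bound; each is routine once the space is fixed, and they reduce matters to estimating $N(\bF)$ in the dual norm.

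The heart of the proof---and the main obstacle---is the nonlinear estimate at the low threshold $s>\tfrac{n+1}{2}$. Because the wave propagates only in $y^1$, there is \emph{no} Strichartz or transverse dispersive decay to exploit, so a naive product estimate only closes at the classical exponent $s>1+\tfrac n2$; the extra half-derivative must be extracted from the null form \eqref{nullform}. The key algebraic cancellation is that $Q^c_0$ annihilates co-propagating waves: if $A=f(y^1-ct,y')$ and $B=g(y^1-ct,y')$ then $Q^c_0(A,B)=0$, so in frequency $Q^c_0$ carries the symbol $\tau_1\tau_2-c^2\xi_1\eta_1$, which vanishes when the two inputs co-propagate and supplies, after a dyadic and angular decomposition, a factor measuring their transversality. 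This yields a bilinear estimate of Klainerman--Machedon type, $\norm{Q^c_0(A,B)}_{\mathcal{N}}\lesssim\norm{A}_{X}\norm{B}_{X}$, valid precisely for $s>\tfrac{n+1}{2}$. It then remains to absorb the pressure: since $\mathcal{R}=\p_y\p_y\Delta_x^{-1}$ is order zero and, by \eqref{Lag5B} together with the elliptic estimate for the perturbed Laplacian, bounded on the relevant spaces (the $O(\bF)$ part of $\Delta_x-\Delta_\by$ being treated perturbatively and the higher-order terms controlled by the product/algebra estimates), applying $\mathcal{R}$ does not disturb the bilinear gain. I expect the verification of this bilinear estimate for the degenerate $1$-D operator, together with showing that the nonlocal pressure operator respects the null structure, to be the decisive technical difficulty.

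Finally, the contraction closes on $[0,T]$ with $T=T(n,s,\norm{\bv_0}_{H^s})$, giving a unique $\bF=\bH-\bE$ in the stated space, which a standard persistence-of-regularity and limiting argument upgrades to $C([0,T],H^{s-1,1})\cap C^1([0,T],H^{s-1,0})$. The velocity is recovered from $\p_tH^{ia}=\p_{y^a}v^i$: since $\p_t\bF\in C([0,T],H^{s-1})$ encodes $\na_\by\bv\in H^{s-1}$, we obtain $\bv\in C([0,T],H^s)$. For the magnetic field we use the frozen-in law $b^i=H^{ia}b_0^a=c\,H^{i1}$ together with the curl-free symmetry $\p_{y^b}H^{ia}=\p_{y^a}H^{ib}$ (valid because $H^{ia}=\p_{y^a}x^i$): the transverse derivatives of the $\bb_0$-column satisfy $\p_{y^j}b^i=c\,\p_{y^1}H^{ij}$, which lies in $H^{s-1}$ by the extra $\p_{y^1}$-regularity built into $H^{s-1,1}$. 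Hence $\na\bb\in H^{s-1}$ and $\bb\in C([0,T],H^s)$, completing the proof.
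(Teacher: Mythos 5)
Your plan matches the paper's proof essentially step for step: reduction to the degenerate $1$-D wave system for $\bH-\bE$, a wave-Sobolev space $H^{s-1,1}_\theta$ built on the weight $\left<|\tau|-|\xi_1|\right>$, a Klainerman--Machedon bilinear estimate for the null form $Q_0$ at $s>\frac{n+1}{2}$, an order-zero treatment of the pressure via the Lagrangian/Eulerian change of variables, and a contraction argument, with $\bv$ and $\bb$ recovered exactly as you describe. The only small discrepancy is in the expected mechanism of the bilinear estimate: since the wave operator is $1$-D there is no angular decomposition to perform; the paper instead uses that the delta constraint pins $\eta_1$ to two points (giving the full gain from the null symbol in the $(t,y^1)$ plane) and pays full Sobolev embedding in the $n-1$ transverse variables, which is precisely where the threshold $s>\frac{n+1}{2}$ arises.
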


\begin{remark}
	\begin{enumerate}
	\item	Compared to the classical requirement $ s > \frac{n}{2} + 1 $ established by Schmidt \cite{Schmidt} and Secchi \cite{Sec}, Theorem \ref{thm} lowers the regularity demand on the initial velocity by half a derivative. It is also interesting to compare our result with the incompressible Euler equations, for which Bourgain and Li \cite{BL, BL2} showed that the condition $ s > \frac{n}{2} + 1 $ ($n=2,3$) is in fact sharp.
	\item
	 For the free boundary problem of the incompressible ideal MHD system, well-posedness holds when the regularity index satisfies $ s > \frac{n}{2} + 1 $, as shown in \cite{IBTT}. By contrast, for the Cauchy problem, the regularity threshold can be lowered by establishing bilinear estimates on the solutions. Such improvements may not be obtained for free boundary problems.
	 \item Throughout the paper, $\bb\in H^s(\mathbb{R}^n)$ means that $\bb-\bb_0\in H^s(\mathbb{R}^n)$.
  \end{enumerate}
\end{remark}

\begin{remark}
	\begin{enumerate}
		\item
	Our work is further motivated by the studies of Andersson-Kapitanski \cite{AK} and Zhang \cite{Zhang} on the incompressible neo-Hookean elastic equations. We observe that the solution spaces introduced in \cite{AK,Zhang} are not directly applicable to the wave component in \eqref{Lag5}--\eqref{Lag5B}, since the latter involves a degenerate one-dimensional operator. To our knowledge, no Strichartz estimates are available for one-dimensional wave equations. Consequently, if we wish to lower the regularity requirements on the initial data, the problem should be approached through bilinear estimates rather than Strichartz estimates.
	\item
	The first key challenge is therefore to define a new solution space that is adapted to the structure of system \eqref{Lag5}. A second essential point is to verify that, within this new function framework, the relevant product estimates and bilinear estimates for the null form can be established. This strategy draws inspiration from the works of Klainerman and Machedon \cite{KM1}, Foschi and Klainerman \cite{FK}, Klainerman and Selberg \cite{KS2} on nonlinear wave equations.
\end{enumerate}
\end{remark}

\begin{remark}
\begin{enumerate}
		\item The assumption in Theorem \ref{thm} that $\bb_0$ is a constant vector plays a key role, since the proof relies on the solution representation for linear wave equations with constant coefficients as well as the Fourier transform analysis of the associated null form. For details we refer to Proposition \ref{nE} and Lemma \ref{eQR}. A natural follow-up question is whether low regularity well-posedness remains valid when $\bb_0$ is asymptotically flat or is a small perturbation of a nonzero constant vector.
	 \item In Section \ref{PT}, we require the transformation between norms in Eulerian and Lagrangian coordinates. By Lemma \ref{LD4}, the regularity exponent $s$ must satisfy either $s - 1 \in (0,1]$ or $s - 2 \in (0,1]$. Because we require $s > \frac{n+1}{2}$, this forces the spatial dimension to satisfy $2 \leq n \leq 4$.
\end{enumerate}
\end{remark}

\subsection{Notations}
If $f$ and $g$ are two functions, we say $f \lesssim g$ if and only if there exists a constant
$C>0$ such that $f\leq C g$. We say $f\approx g$ if and only if there exists a constant $C_1, C_2>0$ such that $ C_1 f \leq g \leq C_2 f$. The constant $C$ may change from line to line.

Space Fourier transforms on $\mathbb{R}^{n}$ are denoted by $\widehat{\cdot}$ :
\begin{equation*}
\widehat{f}(\xi)=\int_{\mathbb{R}^n} \text{e}^{\text{i}\bx\cdot \xi}f(\bx)d\bx, 
\end{equation*}
and space-time Fourier transforms on $\mathbb{R}^{1+n}$ are denoted by $\widetilde{\cdot}$ :
\begin{equation*}
\widetilde{F}(\tau,\xi)=\int_\mathbb{R} \int_{\mathbb{R}^n} \text{e}^{\text{i}(t\tau+\bx\cdot \xi)}F(t,\bx)d\bx dt.
\end{equation*}
When no confusion can arise, we also use the Fourier transform on $\mathbb{R}$ by
	$\widehat{w}(\tau)=\int_{\mathbb{R}} \text{e}^{\text{i}t \tau}w(t)dt$.
For $a, b, \theta \in \mathbb{R}$, denote the space $H^{a,b}_{\theta}$ by\footnote{The space $\mathcal{S}'(\mathbb{R}^{1+n})$ is the dual space of Schwartz functions.}
\begin{equation*}
	H^{a,b}_{\theta}=\left\{ u\in \mathcal{S}'(\mathbb{R}^{1+n}): \left< \xi \right>^{a} \left< \xi_1 \right>^{b} \left<||\tau|-|\xi_1||\right>^{\theta} \widetilde{u}(\tau,\xi) \in L^2(\mathbb{R}^{1+n}) \right\},
\end{equation*}
where $\xi=(\xi_1,\xi_2,\cdots,\xi_n)^{\mathrm{T}},$ $\left< \xi \right>=1+|\xi|, \left< \xi_1 \right>=1+|\xi_1|$ and  $\left<|\tau|-|\xi_1|\right>=1+ \left||\tau|-|\xi_1| \right|$. We use the notation $\|f\|_{s,\theta}$ to denote a norm in $H^{s,\theta}$, that is
\begin{equation}\label{HS1}
	\|u\|_{H^{a,b}_{\theta}}=\| \left< \xi \right>^a \left< \xi_1 \right>^{b} \left<||\tau|-|\xi_1||\right>^{\theta} \widetilde{u}(\tau,\xi) \|_{ L^2(\mathbb{R}^{1+n}) }.
\end{equation}
We also introduce a norm
\begin{equation}\label{HS2}
	|f|_{H^{a,b}_{\theta}}=\|f\|_{H^{a,b}_{\theta}}+ \|\partial_t f\|_{H^{a,b-1}_{\theta}}.
\end{equation}

The operators $\Lambda$, $\Lambda_{-}$, $\Lambda_1$ and $D$ are denoted by
\begin{equation*}\label{Na}
	\begin{split}
		&\widehat{\Lambda^\alpha f}(\xi)=\left< \xi \right>^\alpha \widehat{f}(\xi),
	\qquad \ \widehat{\Lambda_1^\alpha f}(\xi)=\left< \xi_1 \right>^\alpha \widehat{f}(\xi),
		\\
		& \widehat{D^\alpha f}(\xi)=| \xi_1 |^\alpha \widehat{f}(\xi),
	\quad
		\widehat{\Lambda_{-}^\alpha F}(\tau,\xi)=\left<||\tau|-|\xi_1||\right>^\alpha \widetilde{F}(\tau,\xi).
	\end{split}
\end{equation*}
Denote the operator $\widetilde{\square}$ by
\begin{equation}\label{wo}
	\widetilde{\square}=\frac{\partial^2}{\partial t^2}-\frac{\partial^2}{\partial y_1^2}.
\end{equation}
For two functions $f$ and $g$, we also define $Q_0$ by
\begin{equation}\label{nuf}
	Q_0(f,g)= \partial_t f \partial_t g - \partial_{y_1} f \partial_{y_1} g.
\end{equation} 
For $a,b \in \mathbb{R}$, we define the space
\begin{equation*}
	\begin{split}
			H^{a}(\mathbb{R}^n)=& \{ u\in \mathcal{S}'(\mathbb{R}^{n}): \left< \xi \right>^a  \widehat{u}(\xi) \in L^2(\mathbb{R}^{n}) \}, 
	\end{split}
\end{equation*}
and
\begin{equation*}
	\begin{split}
		H^{a,b}(\mathbb{R}^n)=& \{ u\in \mathcal{S}'(\mathbb{R}^{n}): \left< \xi \right>^a \left< \xi_1 \right>^b  \widehat{u}(\xi) \in L^2(\mathbb{R}^{n}) \}.
	\end{split}
\end{equation*}
Consequently, $H^{a,0}(\mathbb{R}^n)=H^{a}(\mathbb{R}^n)$. Introduce two cut-off functions $\chi$ and $\phi$ respectively satisfying
 \begin{equation}\label{chi}
	\chi \in C^\infty_c(\mathbb{R}), \quad \chi=1 \ \text{on} \ [-1,1], \quad \text{supp} \chi \subseteq (-2,2).
 \end{equation}
 and
 \begin{equation}\label{phi}
	\phi \in C^\infty_c(\mathbb{R}), \quad \phi=1 \ \text{on} \ [-2,2], \quad \text{supp} \phi \subseteq (-4,4).
 \end{equation}
\subsection{Organization of the paper}
In the next section, \ref{app}, we derive the wave-elliptic formulation of \eqref{MHD} in the Lagrangian framework. In Section \ref{pr}, we establish several inequalities and product estimates. In Section \ref{be}, we prove bilinear estimates for null forms. Section \ref{PT} presents the proof of Theorem \ref{thm}. Finally, in the appendix, Section \ref{App}, we provide the proof of Proposition \ref{nE}.

\section{Derivation of incompressible MHD equations in Lagrangian coordinates}\label{app}
\begin{Lemma}\label{Lagm}
	Let $(\bar{\bb}, \bar{\bv})$ be a smooth solution of system \eqref{MHD}, with the initial magnetic field taken as a constant vector. Using the Lagrangian trajectory defined in \eqref{Lag2} and \eqref{Lag3}, the system in Lagrangian coordinates is reduced as follows:
	\begin{equation}\label{Lag00}
		\begin{cases}
			&\frac{\partial^2 x^i}{\partial t^2}-b_0^k b_0^j \frac{\partial^2 x^i}{\partial y^k y^j}+ \partial_{x_i} q=0,
			\\
			&\mathrm{det}(\frac{\partial \bx}{\partial \by})=1,
			\\
			& \bx|_{t=0}=\by, \quad \frac{\partial \bx}{\partial t}|_{t=0}=\bv_0.
		\end{cases}
	\end{equation}
	Moreover, if we set
	\begin{equation*}
		H^{ia}=\frac{\partial x^i}{\partial y^a},
	\end{equation*}
	we also have
	\begin{equation}\label{LagB}
		\begin{cases}
			&\frac{\partial^2 H^{ia}}{\partial t^2}-b_0^k b_0^j \frac{\partial^2 H^{ia}}{\partial y^k y^j}+ \frac{\partial}{\partial y^a} (\partial_{x_i} q)=0,
			\\
			& H^{ia}|_{t=0}=\delta^{ia}, \quad \frac{\partial H^{ia}}{\partial t}|_{t=0}=\frac{\partial v^i_0}{\partial y^a},
		\end{cases}
	\end{equation}
	where $q$ and $\bH=(H^{ia})_{n\times n}$ also satisfy\footnote{Here $\bH^{-1}$ is a inverse matrix of $\bH$.}
	\begin{align}\label{LagC}
		\Delta_x {q}(t,\by)
		=& (\bH^{-1})^{{ki}} (\bH^{-1})^{mj} \big(  \frac{\partial H^{jk} }{\partial t}  \cdot \frac{\partial H^{im}}{\partial t}  
		-b_0^l b_0^r \frac{\partial H^{jk}}{\partial y^l} \cdot \frac{\partial H^{im}}{\partial y^r} \big),
		\\\label{LagD}
		(\bH^{-1})^{ia} = & \frac{1}{(n-1)!} \epsilon_{i i_2 i_3 \cdots i_n }\epsilon^{a a_2 a_3 \cdots a_n } H^{i_2 a_2}  \cdots H^{i_n a_n}.
	\end{align}
\end{Lemma}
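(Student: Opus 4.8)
The plan is to pass from \eqref{MHD} to \eqref{Lag00}--\eqref{LagD} by combining three ingredients: the Liouville/volume-preservation identity for the flow, a ``frozen-in'' representation of the magnetic field in terms of the deformation gradient $\bH$, and the chain rule relating $\na_x$ to $\na_y$ along the trajectory \eqref{Lag2}. First I would record the elementary consequences of \eqref{Lag2}--\eqref{Lag4}: differentiating \eqref{Lag2} gives $\frac{\p x^i}{\p t}=v^i$, so that the material derivative $\p_t+\bar{\bv}\cdot\na$ becomes $\frac{\p}{\p t}$ in Lagrangian variables and $\frac{\p^2 x^i}{\p t^2}=(\p_t\bar{v}^i+\bar{v}^k\p_{x^k}\bar{v}^i)|_{\bx(t,\by)}$. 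The incompressibility $\dv\bar{\bv}=0$, together with Jacobi's formula $\frac{d}{dt}\det\bH=\det\bH\,(\dv\bar{\bv})|_{\bx}$ and $\det\bH|_{t=0}=1$, yields the constraint $\det\bH\equiv1$, the second line of \eqref{Lag00}.

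The conceptual heart is the frozen-in identity $b^i(t,\by)=H^{ia}(t,\by)\,b_0^a$. To prove it I set $\tilde b^i:=H^{ia}b_0^a$ and check that $\tilde b^i$ and $b^i$ solve the same linear ODE in $t$ with the same datum: using $\bb_0$ constant and $\p_t H^{ia}=\p_{y^a}v^i$, I get $\p_t\tilde b^i=\p_{y^a}v^i\,b_0^a=(\p_{x^k}\bar{v}^i)H^{ka}b_0^a=(\p_{x^k}\bar{v}^i)\tilde b^k$, while the induction equation (second line of \eqref{MHD}) in Lagrangian variables reads $\p_t b^i=(\bar{\bb}\cdot\na)\bar{v}^i=(\p_{x^k}\bar{v}^i)b^k$; since $\tilde b^i|_{t=0}=b_0^i=b^i|_{t=0}$, uniqueness forces $\tilde b^i=b^i$. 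A direct corollary, using the chain rule $\p_{y^a}=H^{ka}\p_{x^k}$ (hence $\p_{x^k}=(\bH^{-1})^{ak}\p_{y^a}$) and $\bH^{-1}\bH=\bE$, is the operator identity $(\bar{\bb}\cdot\na_x)=b^k(\bH^{-1})^{ak}\p_{y^a}=b_0^a\p_{y^a}$ along the flow.

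Substituting this identity and the material-derivative formula into the momentum equation (first line of \eqref{MHD}), and using $\bb_0$ constant so that $b_0^a\p_{y^a}(H^{ic}b_0^c)=b_0^a b_0^c\frac{\p^2 x^i}{\p y^a\p y^c}$, gives exactly the first line of \eqref{Lag00}; differentiating that equation in $y^a$ then produces \eqref{LagB}, with initial data read off from $\bx|_{t=0}=\by$ and $\p_t\bx|_{t=0}=\bv_0$. For the pressure \eqref{LagC} I would differentiate the constraint $\det\bH\equiv1$ twice in $t$: the first derivative gives $(\bH^{-1})^{ai}\p_t H^{ia}=0$, and the second, combined with $\p_t(\bH^{-1})^{ai}=-(\bH^{-1})^{ak}\p_t H^{km}(\bH^{-1})^{mi}$ and the wave equation \eqref{LagB} for $\p_t^2 H^{ia}$, converts $(\bH^{-1})^{ai}\p_{y^a}(\p_{x^i}q)=\Delta_x q$ into quadratic expressions in $\p_t\bH$ and, after differentiating the spatially differentiated constraint $(\bH^{-1})^{ai}\p_{y^c}H^{ia}=0$ once more and contracting with $b_0^c b_0^d$, in $b_0\cdot\na_y\bH$; collecting terms yields \eqref{LagC}. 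Finally \eqref{LagD} is the adjugate formula for the inverse of a matrix of determinant one, valid pointwise since $\det\bH\equiv1$.

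The main obstacle is the bookkeeping in the pressure derivation: one must convert $\na_x$ to $\na_y$ consistently, apply the derivative-of-inverse identity, and invoke both the time- and space-differentiated forms of $\det\bH\equiv1$ to expose the quadratic (null-type) structure of the right-hand side of \eqref{LagC}. Careful index management, and attention to the placement of $\bH^{-1}$ and to the signs of the velocity and magnetic quadratic terms, is where errors are most likely to creep in.
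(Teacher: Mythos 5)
Your proposal is correct and, for most of the lemma, follows the same route as the paper: incompressibility plus Jacobi's formula gives $\det\bH\equiv 1$; the frozen-in identity $b^i=H^{ia}b_0^a$ is proved by an ODE-uniqueness argument (the paper phrases this as the difference $\bar b^i-b_0^j\partial x^i/\partial y^j$ solving a homogeneous linear ODE with zero datum, see \eqref{T030}--\eqref{BKM}, which is the same argument as your ``two solutions of the same linear ODE'' formulation); substituting into the momentum equation gives \eqref{Lag00}, differentiating in $y^a$ gives \eqref{LagB}, and the adjugate formula gives \eqref{LagD}. The one place where you genuinely diverge is the pressure identity \eqref{LagC}: the paper obtains it by taking the spatial divergence of the Euler momentum equation and then converting the resulting quadratic expression to Lagrangian variables by the chain rule (its \eqref{T08}--\eqref{T10}), whereas you differentiate the constraint $\det\bH\equiv 1$ twice in $t$, use the derivative-of-inverse identity together with the wave equation for $\partial_t^2 H^{ia}$, and eliminate the leftover term $b_0^cb_0^d(\bH^{-1})^{ai}\partial_{y^c}\partial_{y^d}H^{ia}$ via the space-differentiated constraint. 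Both routes work; the paper's is computationally lighter, while yours stays entirely at the level of the Lagrangian unknowns and makes the compatibility between the constraint and the wave equation explicit. One caveat, which you yourself anticipate under ``signs of the quadratic terms'': carried out carefully, your computation yields $\Delta_x q=-\mathrm{tr}\big((\bH^{-1}\partial_t\bH)^2\big)+b_0^lb_0^r\,\mathrm{tr}\big(\bH^{-1}\partial_{y^l}\bH\,\bH^{-1}\partial_{y^r}\bH\big)$, i.e.\ the opposite overall sign to the one displayed in \eqref{LagC}. This is not a defect of your argument: the divergence of the momentum equation actually gives $\Delta_x\bar q=-\partial_i\bar v^j\partial_j\bar v^i+\partial_i\bar b^j\partial_j\bar b^i$, and the paper's \eqref{T08} drops the minus sign. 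The discrepancy is harmless downstream, since only the null-form structure $Q_0$ and not the sign enters the later estimates, but you should record the identity with the correct sign.
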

\begin{proof}
	The incompressibility condition $\text{div}\bar{\bv}=0$ and \eqref{Lag2} mean that
	\begin{equation}\label{T00}
		\text{det}(\frac{\partial \bx}{\partial \by})=1.
	\end{equation}
	Using the chain rule and \eqref{Lag2}, we have
	\begin{equation*}\label{T01}
		\frac{d}{dt} ( \frac{\partial x^i(t,\by)}{\partial y^j} )= \frac{\partial \bar{v}^i}{\partial x^k}(t,\bx(t,\by)) \frac{\partial x^k(t,\by)}{\partial y^j}.
	\end{equation*}
	Similarly, we get
	\begin{equation}\label{T02}
		\frac{d}{dt} ( b^j_0(\by)\frac{\partial x^i(t,\by)}{\partial y^j} )=( b^j_0(\by)\frac{\partial x^k(t,\by)}{\partial y^j} ) \frac{\partial \bar{v}^i}{\partial x^k}(t,\bx(t,\by)) .
	\end{equation}
	By the second equation in \eqref{MHD}, it yields
	\begin{equation}\label{T03}
		\frac{d}{dt} ( \bar{b}^i(t,\bx(t,\by)) ) = \bar{b}^k(t,\bx(t,\by))  \frac{\partial \bar{v}^i}{\partial x^k}(t,\bx(t,\by))
	\end{equation}
	Due to $\bx(0,\by)=\by$, and \eqref{T02}-\eqref{T03}, we therefore get
	\begin{equation}\label{T030}
		\begin{cases}
			& \frac{d}{dt} ( \bar{b}^i - b^j_0\frac{\partial x^i}{\partial y^j} ) = ( \bar{b}^k - b^j_0\frac{\partial x^k}{\partial y^j} )  \frac{\partial \bar{v}^i}{\partial x^k} ,
			\\
			& (\bar{b}^i-b^j_0\frac{\partial x^i}{\partial y^j}) |_{t=0}= 0. 
		\end{cases}
	\end{equation}
	Since\footnote{When we derive the formulations for MHD equations in Lagrangian coordinates, we consider the smooth solutions, therefore \eqref{BKM} holds.} 
	\begin{equation}\label{BKM}
		\int^t_0 \| \nabla \bar{\bv} \|_{L^\infty(\mathbb{R}^n)} d\tau <\infty,
	\end{equation}
and $\bb_0$ is a constant vector, from \eqref{T030}, we can infer 
	\begin{equation}\label{T04}
		b^i(t,\by)=\bar{b}^i(t,\bx(t,\by)) = b^j_0 \frac{\partial x^{\textcolor{red}{i}}(t,\by)}{\partial y^j} .
	\end{equation}
	Furthermore, by the chain rule, we find
	\begin{equation}\label{T05}
		(\bar{b}^j \partial_j \bar{b}^i)(t,\bx(t,\by)) ) =\bar{b}^j(t,\bx(t,\by)) \frac{\partial \bar{b}^i}{ \partial y^k }(t,\bx(t,\by)) \cdot \frac{\partial y^k(t,\by)}{\partial x^j}.
	\end{equation}
	Inserting \eqref{T04} to \eqref{T05}, we have
	\begin{equation}\label{T06}
		\begin{split}
			(\bar{b}^j \partial_j \bar{b}^i)(t,\bx(t,\by)) ) =& b^l_0 \frac{\partial x^j}{\partial y^l} \frac{\partial \bar{b}^i}{ \partial y^k }(t,\bx(t,\by)) \cdot \frac{\partial y^k(t,\by)}{\partial x^j}
			\\
			= &b^l_0 \delta_{lk}\frac{\partial \bar{b}^i}{ \partial y^k }(t,\bx(t,\by)) 
			\\
			= &b^k_0  b^m_0 \frac{\partial^2 x^i}{ \partial y^k  \partial y^m}.
		\end{split}
	\end{equation}
	Therefore, in Lagrangian coordinates, since \eqref{T06}, we can write the first equation in \eqref{MHD} as
	\begin{equation}\label{T07}
		\frac{\partial^2 x^i}{\partial t^2}-b_0^k b_0^m \frac{\partial^2 x^i}{\partial y^k y^m}+ \partial_{x_i} q=0.
	\end{equation}
	Combining \eqref{Lag2}, \eqref{T00} and \eqref{T07}, we have proved \eqref{Lag00}. To prove \eqref{LagB}, let us calculate \eqref{Lag00} in a further way. By \eqref{MHD}, we infer
	\begin{equation*}
		\Delta_x \bar{q}=\partial_i \bar{v}^j \partial_j \bar{v}^i-\partial_i \bar{b}^j \partial_j \bar{b}^i.
	\end{equation*}
By equations \eqref{Lag2} and \eqref{Lag3}, and since $\mathbf{b}_0$ is a constant vector, we therefore obtain 
	\begin{equation}\label{T08}
		\begin{split}
			\Delta_x {q}(t,\by)=& \frac{\partial \bar{v}^j}{\partial y^k} \frac{\partial y^k}{\partial x^i} \cdot \frac{\partial \bar{v}^i}{\partial y^m} \frac{\partial y^m}{\partial x^j}  -\frac{\partial \bar{b}^j}{\partial y^k} \frac{\partial y^k}{\partial x^i} \cdot \frac{\partial \bar{b}^i}{\partial y^m} \frac{\partial y^m}{\partial x^j} 
			\\
			=& \frac{\partial y^k}{\partial x^i} \frac{\partial y^m}{\partial x^j} \left( \frac{\partial \bar{v}^j}{\partial y^k}  \frac{\partial \bar{v}^i}{\partial y^m}  -\frac{\partial \bar{b}^j}{\partial y^k} \frac{\partial \bar{b}^i}{\partial y^m} \right)
			\\
			=& \frac{\partial y^k}{\partial x^i} \frac{\partial y^m}{\partial x^j} \left\{  \frac{\partial }{\partial y^k} (\frac{\partial x^j}{\partial t} )  \frac{\partial }{\partial y^m} (\frac{\partial x^i}{\partial t} ) 
			-\frac{\partial }{\partial y^k}(b_0^l \frac{\partial x^j}{\partial y^l} ) \frac{\partial }{\partial y^m}(b_0^r \frac{\partial x^{{i}}}{\partial y^r} ) \right\}
			\\
			=& \frac{\partial y^k}{\partial x^i} \frac{\partial y^m}{\partial x^j} \left\{  \frac{\partial }{\partial t} (\frac{\partial x^j}{\partial y^k} ) \cdot \frac{\partial }{\partial t} (\frac{\partial x^i}{\partial y^m} ) 
			-b_0^l b_0^r \frac{\partial }{\partial y^l}( \frac{\partial x^j}{\partial y^k} ) \cdot \frac{\partial }{\partial y^r}( \frac{\partial x^{{i}}}{\partial y^m} ) \right\} .
		\end{split}
	\end{equation}
	For $\bH=(H^{ia})_{n\times n}$ being a $n\times n$ matrix, and $H^{ia}=\frac{\partial x^i}{ \partial y^a}$, using \eqref{T00}, then there exists an inverse matrix of $\bH$. We record it $\bH^{-1}$.  By \eqref{T08}, we deduce
	\begin{equation}\label{T10}
		\begin{split}
			\Delta_x {q}(t,\by)
			=& (\bH^{-1})^{ki} (\bH^{-1})^{mj} \left\{  \frac{\partial H^{jk} }{\partial t}  \cdot \frac{\partial H^{im}}{\partial t}  
			-b_0^l b_0^r \frac{\partial H^{jk}}{\partial y^l} \cdot \frac{\partial H^{im}}{\partial y^r} \right\}.
		\end{split}
	\end{equation}
	Operating $\frac{\partial }{ \partial y^a}$ on \eqref{T07}, it yields
	\begin{equation}\label{T11}
		\frac{\partial^2 H^{ia}}{\partial t^2}-b_0^k b_0^m \frac{\partial^2 H^{ia}}{\partial y^k y^m}+ \frac{\partial }{ \partial y^a}(\frac{\partial q}{ \partial x^i})=0.
	\end{equation}
	Using \eqref{T00} again, we have
	\begin{equation}\label{T12}
		\begin{split}
			(\bH^{-1})^{ia} = & \frac{1}{(n-1)!} \epsilon_{i i_2 i_3 \cdots i_n }\epsilon^{a a_2 a_3 \cdots a_n } H^{i_2 a_2}  \cdots H^{i_n a_n}.
		\end{split}
	\end{equation}
Combining \eqref{T10}, \eqref{T11}, and \eqref{T12}, we get \eqref{LagB}, \eqref{LagC} and \eqref{LagD}. Therefore, we complete the proof of this lemma.
\end{proof}

\section{Preliminaries}\label{pr}
In this section, we will introduce several inequalities within the solution space.
\begin{Lemma}\label{QR1}
	Let $\theta>\frac12$ and $a,b \in \mathbb{R}$. For any function $f\in \mathcal{S}'(\mathbb{R}^{1+n})$, we have 
	\begin{equation*}
		\|f\|_{ H^{a,b} } \lesssim \|f\|_{ H^{a,b}_\theta } .
	\end{equation*}
\end{Lemma}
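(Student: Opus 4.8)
The plan is to read $\|f\|_{H^{a,b}}$ as the fixed-time spatial norm $\sup_{t}\|f(t,\cdot)\|_{H^{a,b}(\mathbb{R}^n)}$, so that the asserted inequality is exactly the classical embedding of a Bourgain-type space into $L^\infty_t H^{a,b}_x$, here adapted to the $1$-D null weight $\langle ||\tau|-|\xi_1||\rangle^{\theta}$ attached to the operator $\square=\partial_t^2-\partial_{y_1}^2$. The mechanism behind such embeddings is that $\theta>\tfrac12$ makes $\langle ||\tau|-|\xi_1||\rangle^{-\theta}$ square-integrable in $\tau$, which lets one pass from a space--time $L^2$ weight to a uniform-in-$t$ spatial bound by Cauchy--Schwarz in $\tau$. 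First I would fix $t$ and recover the spatial slice from the full space--time transform.

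For each fixed $\xi$, the spatial Fourier transform of the time slice is the partial inverse transform in $\tau$,
\begin{equation*}
	\widehat{f(t,\cdot)}(\xi)=\frac{1}{2\pi}\int_{\mathbb{R}} \mathrm{e}^{-\mathrm{i}t\tau}\,\widetilde{f}(\tau,\xi)\,d\tau ,
\end{equation*}
so that $|\widehat{f(t,\cdot)}(\xi)|\le \frac{1}{2\pi}\int_{\mathbb{R}}|\widetilde{f}(\tau,\xi)|\,d\tau$ uniformly in $t$. Inserting the factor $\langle ||\tau|-|\xi_1||\rangle^{\theta}\langle ||\tau|-|\xi_1||\rangle^{-\theta}$ and applying Cauchy--Schwarz in $\tau$ gives
\begin{equation*}
	|\widehat{f(t,\cdot)}(\xi)|^2 \lesssim \Big( \int_{\mathbb{R}} \langle ||\tau|-|\xi_1||\rangle^{-2\theta}\,d\tau \Big) \int_{\mathbb{R}} \langle ||\tau|-|\xi_1||\rangle^{2\theta}\,|\widetilde{f}(\tau,\xi)|^2\,d\tau .
\end{equation*}

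The place where $\theta>\tfrac12$ enters is the first factor: since the integrand is even in $\tau$ and $2\theta>1$,
\begin{equation*}
	\int_{\mathbb{R}} \langle ||\tau|-|\xi_1||\rangle^{-2\theta}\,d\tau = 2\int_{0}^{\infty} \langle |\tau-|\xi_1|| \rangle^{-2\theta}\,d\tau \le 2\int_{\mathbb{R}} (1+|\sigma|)^{-2\theta}\,d\sigma =: C_\theta<\infty ,
\end{equation*}
a bound that is \emph{uniform} in $\xi_1$ (hence in $\xi$), because it only restricts/translates the domain of an integrable even profile. I would then multiply the previous display by $\langle\xi\rangle^{2a}\langle\xi_1\rangle^{2b}$, integrate over $\xi\in\mathbb{R}^n$, and use Fubini to exchange the $\tau$- and $\xi$-integrations; the right-hand side becomes $C_\theta (2\pi)^{-2}\|f\|_{H^{a,b}_\theta}^2$ directly from the definition \eqref{HS1}. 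As this is independent of $t$, taking the supremum over $t$ yields $\|f\|_{H^{a,b}}\lesssim \|f\|_{H^{a,b}_\theta}$.

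The Cauchy--Schwarz step and the weight integral are routine; the only genuinely delicate points are (i) making the manipulation rigorous for $f\in\mathcal{S}'(\mathbb{R}^{1+n})$ rather than a smooth function, which I would handle by first proving the estimate for $f$ with $\widetilde{f}$ smooth and compactly supported, where every integral converges absolutely and Fubini is immediate, and then extending by density in the $H^{a,b}_\theta$ norm; and (ii) verifying that $C_\theta$ is truly uniform in the frequency shift $|\xi_1|$, which is exactly the even-ness plus integrability observation above. I expect (i) to be the main obstacle only in the bookkeeping sense. As a byproduct the same estimate upgrades boundedness to continuity in $t$, giving the $C_t H^{a,b}_x$ membership invoked later, though that refinement is not required for the stated norm inequality.
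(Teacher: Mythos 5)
Your proposal is correct and follows essentially the same route as the paper: the paper likewise expresses the fixed-time spatial Fourier transform as the partial inverse transform in $\tau$, inserts the weight $\langle ||\tau|-|\xi_1||\rangle^{\pm\theta}$, applies Cauchy--Schwarz in $\tau$, and uses that $\int_{\mathbb{R}}\langle ||\tau|-|\xi_1||\rangle^{-2\theta}d\tau\lesssim 1$ uniformly in $\xi_1$ when $\theta>\frac12$ (the paper first reduces to $a=b=0$ by setting $u=\Lambda^a\Lambda_1^b f$, which is equivalent to your multiplying by $\langle\xi\rangle^{2a}\langle\xi_1\rangle^{2b}$ before integrating). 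Your additional remarks on density and on uniformity in the frequency shift are consistent with, and slightly more careful than, the paper's argument.
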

\begin{proof}
	Set $u=  \Lambda^a \Lambda^b_1 f$. Thus, we only need to prove the following estimate
	\begin{equation}\label{qr0}
		\| u \|_{L^2(\mathbb{R}^n)} \lesssim \|  \Lambda^\theta_{-} u\|_{L^2(\mathbb{R}^{1+n})}.
	\end{equation}
	A direct calculation and H\"older's inequality tell us
	\begin{equation}\label{qr1}
		\begin{split}
			\| u \|^2_{L^2(\mathbb{R}^n)} =& \int_{\mathbb{R}^n} |\widehat{u}(t,\xi)|^2d\xi
			\\
			=& \frac{1}{2\pi}\int_{\mathbb{R}^n} |\int_{\mathbb{R}} \widetilde{u}(\tau,\xi)\mathrm{e}^{i\tau t} d\tau |^2d\xi
			\\
			\lesssim & \int_{\mathbb{R}^n} \big( \int_{\mathbb{R}}  |\left<  |\tau|-|\xi_1| \right>^\theta \widetilde{u}(\tau,\xi) |^2 d\tau  \big) \cdot \big( \int_{\mathbb{R}}  \left<  |\tau|-|\xi_1| \right>^{-2\theta}  d\tau  \big)  d\xi.
		\end{split}
	\end{equation}
	Due to $\theta>\frac12$, it follows
	\begin{equation}\label{qr2}
		\int_{\mathbb{R}}  \left<  |\tau|-|\xi_1| \right>^{-2\theta}  d\tau  \lesssim 1.
	\end{equation}
	Inserting \eqref{qr2} to \eqref{qr1}, we therefore get \eqref{qr0}. At this stage, we complete the proof of this lemma.
\end{proof}
\begin{Lemma}\label{dQR}
	Let $s>\frac{n+1}{2} (n\geq 1)$ and $\theta>\frac12$. Then the following estimates
	\begin{equation}\label{dr0}
		\begin{split}
			\| f \|_{L^\infty(\mathbb{R}^{1+n})} \lesssim \| f \|_{H_\theta^{s-1,1}(\mathbb{R}^{1+n})},
		\end{split}
	\end{equation}
	and
	\begin{equation}\label{dr1}
		\begin{split}
			\| fg \|_{H^{s-1,0}_0 (\mathbb{R}^{1+n}) } \lesssim \| f\|_{H^{s-1,0}_0 (\mathbb{R}^{1+n})} \| g \|_{ H^{s-1,1}_\theta (\mathbb{R}^{1+n})},
		\end{split}
	\end{equation}
	hold.
\end{Lemma}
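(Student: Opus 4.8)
The plan is to treat the two inequalities separately: I would prove \eqref{dr0} by a direct Fourier-analytic argument and reduce \eqref{dr1}, via Plancherel in time, to a purely spatial product estimate handled by an anisotropic Littlewood--Paley decomposition. For \eqref{dr0}, I would start from Fourier inversion, which gives the crude bound $\|f\|_{L^\infty(\mathbb{R}^{1+n})}\lesssim\|\widetilde f\|_{L^1(\mathbb{R}^{1+n})}$. Writing $w(\tau,\xi)=\langle\xi\rangle^{s-1}\langle\xi_1\rangle\langle||\tau|-|\xi_1||\rangle^{\theta}$ for the weight defining the $H^{s-1,1}_\theta$-norm, Cauchy--Schwarz gives $\|\widetilde f\|_{L^1}\le\|w^{-1}\|_{L^2}\,\|f\|_{H^{s-1,1}_\theta}$, so everything reduces to the finiteness of $\|w^{-1}\|_{L^2}^2=\int\langle\xi\rangle^{-2(s-1)}\langle\xi_1\rangle^{-2}\langle||\tau|-|\xi_1||\rangle^{-2\theta}\,d\tau\,d\xi$. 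I would integrate in three stages: the $\tau$-integral is bounded uniformly in $\xi_1$ because $\theta>\frac12$ (this is exactly \eqref{qr2}); then for fixed $\xi_1$ the integral of $\langle\xi\rangle^{-2(s-1)}$ over $\xi'=(\xi_2,\dots,\xi_n)\in\mathbb{R}^{n-1}$ is comparable to $\langle\xi_1\rangle^{(n-1)-2(s-1)}$, which is finite precisely because $2(s-1)>n-1$, i.e. $s>\frac{n+1}{2}$; finally the remaining $\xi_1$-integral of $\langle\xi_1\rangle^{(n-1)-2(s-1)-2}$ converges because $2s>n$. The essential point is that the weight $\langle\xi_1\rangle^{-2}$ furnished by the index $b=1$ lowers the admissible exponent from the isotropic threshold $s>\frac n2+1$ to $s>\frac{n+1}{2}$.

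For \eqref{dr1}, I would first note that, since the indices $b=0$ and $\theta=0$ carry neither the $\xi_1$- nor the modulation weight, Plancherel in $t$ identifies $\|u\|_{H^{s-1,0}_0}=\|u\|_{L^2_tH^{s-1}_x}$. Hence it suffices to prove the pointwise-in-time spatial estimate $\|f(t)g(t)\|_{H^{s-1}_x}\lesssim\|f(t)\|_{H^{s-1}_x}\|g(t)\|_{H^{s-1,1}_x}$; taking $L^2_t$ of the first factor, $\sup_t$ of the second, and then invoking Lemma \ref{QR1} to bound $\sup_t\|g(t)\|_{H^{s-1,1}_x}\lesssim\|g\|_{H^{s-1,1}_\theta}$, yields \eqref{dr1}. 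For the spatial estimate I would run Bony's paraproduct decomposition $fg=T_fg+T_gf+R(f,g)$ together with the anisotropic mixed-norm bound $\|FG\|_{L^2_x}\le\|F\|_{L^2_{x_1}L^\infty_{x'}}\|G\|_{L^\infty_{x_1}L^2_{x'}}$. The two inputs that make this work are: one-dimensional Sobolev embedding in $x_1$, namely $\|G\|_{L^\infty_{x_1}L^2_{x'}}\lesssim\|\langle\xi_1\rangle^{a}\widehat G\|_{L^2}\lesssim\|G\|_{H^{0,1}_x}$ with any $\frac12<a\le1$ (cheap, because $b=1$ supplies more than the required $\frac12$ derivative, and Minkowski's inequality lets one pass from $L^2_{x'}L^\infty_{x_1}$ to $L^\infty_{x_1}L^2_{x'}$); and $(n-1)$-dimensional Sobolev embedding in $x'$, $\|F\|_{L^2_{x_1}L^\infty_{x'}}\lesssim\|F\|_{H^{(n-1)/2+\epsilon}_x}$, which is affordable exactly because $s-1>\frac{n-1}{2}$.

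The two paraproducts are routine: $T_gf$ (low-frequency $g$) is controlled by $\|g\|_{L^\infty_x}\|f\|_{H^{s-1}_x}$ using the spatial form of \eqref{dr0}, and $T_fg$ (low-frequency $f$) is frequency-localized near the high frequency of $g$, hence almost orthogonal in the dyadic parameter, and closes directly from the mixed-norm bound. The main obstacle is the resonant (high--high) term $R(f,g)=\sum_k\sum_{|j-k|\le C}f_jg_k$, where both factors oscillate at frequency $\sim2^k$ but the output frequency may be far lower, so no output localization is available and the sum over $k$ must be handled by hand. Here I would estimate $\|f_{\sim k}g_k\|_{H^{s-1}_x}\lesssim 2^{k(s-1)}\|f_{\sim k}\|_{L^2_{x_1}L^\infty_{x'}}\|g_k\|_{L^\infty_{x_1}L^2_{x'}}$ and apply the two embeddings: measuring $f_{\sim k}$ in the lower norm $H^{(n-1)/2+\epsilon}_x$ produces a gain $2^{-k(s-1-\frac{n-1}{2}-\epsilon)}$, while $g$ contributes a factor $2^{-k(s-1)}\|g_k\|_{H^{s-1,1}_x}$, so that the $2^{k(s-1)}$ prefactor is absorbed and the residual dyadic weight is summable since $s-1>\frac{n-1}{2}$; a Cauchy--Schwarz in $k$ then closes the estimate. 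I expect this resonant term to be the crux: it is precisely where the anisotropy---extra $x_1$-regularity on $g$ traded against the $(n-1)$-dimensional Sobolev threshold on $f$---is indispensable, and where the borderline character of the exponent $s=\frac{n+1}{2}$ manifests itself.
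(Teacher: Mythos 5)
Your proposal is correct, and for \eqref{dr0} it is the paper's argument verbatim: Fourier inversion, Cauchy--Schwarz against the weight $\langle\xi\rangle^{s-1}\langle\xi_1\rangle\langle||\tau|-|\xi_1||\rangle^{\theta}$, and convergence of the dual integral exactly under $\theta>\frac12$ and $s>\frac{n+1}{2}$ (the paper's \eqref{w1}--\eqref{w2}); your three-stage evaluation of that integral just makes explicit why the anisotropic weight $\langle\xi_1\rangle^{-2}$ buys the half derivative. For \eqref{dr1} the substance is also the same: the paper's proof rests on precisely your two embeddings --- the one-dimensional Sobolev embedding in $y_1$ paid for by the $\langle\xi_1\rangle$ weight on $g$ (giving $\|\Lambda^{s-1}g\|_{L^\infty_{t,y_1}L^2_{y'}}\lesssim\|\Lambda_1\Lambda^{s-1}g\|_{L^\infty_tL^2}$, then Lemma \ref{QR1}), and the $(n-1)$-dimensional Sobolev embedding in the transverse variables using $s-1>\frac{n-1}{2}$ (giving $\|f\|_{L^2_{t,y_1}L^\infty_{y'}}\lesssim\|\Lambda^{s-1}f\|_{L^2}$) --- combined with the same mixed-norm H\"older pairing. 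The difference is organizational: the paper invokes the informal fractional Leibniz splitting $\Lambda^{s-1}(fg)\approx\Lambda^{s-1}f\cdot g+f\cdot\Lambda^{s-1}g$ (its \eqref{dr2}) and keeps the time variable coupled into the mixed norms, whereas you first reduce by Plancherel in $t$ to a fixed-time spatial estimate and then run a full Bony decomposition. Your version is the rigorous form of the paper's heuristic: the two-term splitting in \eqref{dr2} does not literally hold for non-integer $s-1$, and it is exactly the resonant piece $R(f,g)$ --- which the paper's splitting never isolates --- where one must check, as you do, that the $2^{k(s-1)}$ output prefactor is absorbed by the gains from the two embeddings. So your write-up fills a genuine gap in rigor while using the identical analytic inputs; the paper's route is shorter but leans on \eqref{dr2} as stated.
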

\begin{proof}
	By applying the space-time Fourier transform and H\"older's inequality, we obtain
	\begin{equation}\label{w1}
		\begin{split}
			\| f \|_{L^\infty(\mathbb{R}^{1+n})}
			\lesssim & \int_{ \mathbb{R}^{n} } \int_{\mathbb{R}} |\widetilde{f}(\tau,\xi)\mathrm{e}^{it\tau} | d\tau d\xi
			\\
			\lesssim & \left( \int_{ \mathbb{R}^{n} } \int_{\mathbb{R}} \left< |\tau|-|\xi_1| \right>^{2\theta} \left< \xi_1 \right>^2 \left<\xi\right>^{2(s-1)} |\widetilde{f}(\tau,\xi) |^2 d\tau d\xi \right)^{\frac12} 
			\\
			& \quad \times \left( \int_{ \mathbb{R}^{n} } \int_{\mathbb{R}} \left< |\tau|-|\xi_1| \right>^{-2\theta} \left< \xi_1 \right>^{-2} \left<\xi\right>^{-2(s-1)}  d\tau d\xi \right)^{\frac12}
			\\
			\lesssim & \| f \|_{H_\theta^{s-1,1}(\mathbb{R}^{1+n})} \left( \int_{ \mathbb{R}^{n} } \int_{\mathbb{R}} \left< |\tau|-|\xi_1| \right>^{-2\theta} \left< \xi_1 \right>^{-2} \left<\xi\right>^{-2(s-1)}  d\tau d\xi \right)^{\frac12}.
		\end{split}
	\end{equation}
	For $\theta>\frac12$ and $s>\frac{n+1}{2}$, we have
	\begin{equation}\label{w2}
		\left( \int_{ \mathbb{R}^{n} } \int_{\mathbb{R}} \left< |\tau|-|\xi_1| \right>^{-2\theta} \left< \xi_1 \right>^{-2} \left<\xi\right>^{-2(s-1)}  d\tau d\xi \right)^{\frac12} \lesssim 1.
	\end{equation}
	By estimates \eqref{w1} and \eqref{w2}, we have proven \eqref{dr0}. Note
	\begin{equation}\label{dr2}
		\Lambda^{s-1}(fg) \approx \Lambda^{s-1}f \cdot g +  f  \cdot \Lambda^{s-1}g .
	\end{equation}
	On one hand, by applying H\"older's inequality and using the estimate \eqref{dr0}, we can derive that
	\begin{equation}\label{dr3}
		\begin{split}
			\| \Lambda^{s-1}f \cdot g \|_{L^2(\mathbb{R}^{1+n})} \lesssim & \| \Lambda^{s-1}f \|_{L^2(\mathbb{R}^{1+n})} \| g \|_{L^\infty (\mathbb{R}^{1+n})}
			\\
			\lesssim & \| f \|_{H_0^{s-1,0}(\mathbb{R}^{1+n})} \| g \|_{H_\theta^{s-1,1}(\mathbb{R}^{1+n})}.
		\end{split}
	\end{equation}
	On the other hand, by H\"older's inequality, Lemma \ref{QR1}, and the Sobolev embedding theorem, we have
	\begin{equation}\label{dr4}
		\begin{split}
			\| \Lambda^{s-1}g \cdot f \|_{L^2(\mathbb{R}^{1+n})} \lesssim & \| \Lambda^{s-1}g \|_{L_{t,y_1}^\infty(\mathbb{R}^2)L^2(\mathbb{R}^{n-1})} \| f \|_{L^2_{t,y_1} (\mathbb{R}^2)L^\infty (\mathbb{R}^{n-1})}
			\\
			\lesssim & \|\Lambda_1 \Lambda^{s-1}g \|_{L_{t}^\infty(\mathbb{R})L^2(\mathbb{R}^{n})} \|\Lambda^{s-1} f \|_{L^2_{\mathbb{R}^{1+n}} }
			\\
			\lesssim & \| g \|_{H_\theta^{s-1,1}(\mathbb{R}^{1+n})} \| f \|_{H_0^{s-1,0}(\mathbb{R}^{1+n})}.
		\end{split}
	\end{equation}
	Combining \eqref{dr2}, \eqref{dr3}, and \eqref{dr4}, we have verified \eqref{dr1}. Thus, we complete the proof of this lemma.
\end{proof}
Next, we present a lemma concerning the inequalities between the Lagrangian and Eulerian coordinate systems.
\begin{Lemma}\label{LD4}
	Let $n\geq 1$. Denote $(t,\bx)$ by the Eulerian coordinates and $(t, \mathbf{y})$ as the Lagrangian coordinates. Let $u$ be a function from $(t,\by) \rightarrow \mathbb{R}^n$. Let $\bar{u}(t,\bx)=u(t,\by(t,\bx))$. If the Jacobian determinant satisfies $\text{det}(\frac{\partial \bx}{\partial \by})=1$, then the following properties hold: 
		\begin{equation}\label{ac1}
		\| u \|_{L^2(\mathbb{R}^n_y)} = \|\bar{u} \|_{L^2(\mathbb{R}^n_x)}, 
	\end{equation}
	\begin{equation}\label{ac2}
		\| u \|_{H^s(\mathbb{R}^n_y)} \leq C \| \frac{\partial \bx}{\partial \by} \|^{\frac{n}{2}+s}_{L^\infty(\mathbb{R}^n_x)} \|\bar{u} \|_{H^s(\mathbb{R}^n_x)}, \quad 0<s<1,
	\end{equation}
	and
	\begin{equation}\label{ac3}
		\| \bar{u}\|_{H^s(\mathbb{R}^n_x)} \leq C \| \frac{\partial \by}{\partial \bx} \|^{\frac{n}{2}+s}_{L^\infty(\mathbb{R}^n_y)} \| u \|_{H^s(\mathbb{R}^n_y)}, \quad 0<s<1,
	\end{equation}
	and
		\begin{equation}\label{ac4}
			\begin{split}
					&  \| \bar{u} \|_{H^1(\mathbb{R}^n_x)} \leq  (1+\| \frac{\partial \by}{\partial \bx} \|_{L^\infty(\mathbb{R}^n_x)})\| {u} \|_{H^1(\mathbb{R}^n_y)},
					\\
					&  \| {u} \|_{H^1(\mathbb{R}^n_y)} \leq  (1+\| \frac{\partial \bx}{\partial \by} \|_{L^\infty(\mathbb{R}^n_y)})\| \bar{u} \|_{H^1(\mathbb{R}^n_x)}.
			\end{split}
	\end{equation}
\end{Lemma}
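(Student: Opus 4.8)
The plan is to prove the four estimates in turn: \eqref{ac1} by a direct change of variables, the fractional estimates \eqref{ac2}--\eqref{ac3} through the Gagliardo (Slobodeckij) characterization of the $H^s$ seminorm, and \eqref{ac4} by the chain rule. The two structural facts I would use throughout are that the volume-preserving hypothesis $\det(\frac{\partial \bx}{\partial \by})=1$ forces the change-of-variables Jacobian to equal $1$ in \emph{both} directions, since $\det(\frac{\partial \by}{\partial \bx})=1/\det(\frac{\partial \bx}{\partial \by})=1$, and that the Lagrangian map $\by\mapsto\bx(\by)$ is a bi-Lipschitz diffeomorphism whose Lipschitz constants are governed by $\|\frac{\partial \bx}{\partial \by}\|_{L^\infty}$ and $\|\frac{\partial \by}{\partial \bx}\|_{L^\infty}$. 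First I would establish \eqref{ac1}: writing $\bar u(t,\bx)=u(t,\by(t,\bx))$ and substituting $\bx\mapsto\by$, the Jacobian factor $\det(\frac{\partial \bx}{\partial \by})=1$ gives $d\bx=d\by$, so $\int|\bar u(t,\bx)|^2\,d\bx=\int|u(t,\by)|^2\,d\by$, which is exactly \eqref{ac1}.

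For the fractional estimates I would recall that for $0<s<1$ the Fourier-defined norm $\|\cdot\|_{H^s(\mathbb{R}^n)}$ is equivalent to $\|u\|_{L^2}^2+[u]_{H^s}^2$, where $[u]_{H^s}^2=\iint |u(\by)-u(\by')|^2\,|\by-\by'|^{-(n+2s)}\,d\by\,d\by'$ is the Gagliardo seminorm. To prove \eqref{ac2} I would start from $[u]_{H^s(\mathbb{R}^n_y)}$, write $u(\by)=\bar u(\bx(\by))$, and change variables $(\by,\by')\mapsto(\bx,\bx')$ with Jacobian $1$, producing the double integral of $|\bar u(\bx)-\bar u(\bx')|^2\,|\by(\bx)-\by(\bx')|^{-(n+2s)}$. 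The mean value theorem for the flow yields $|\bx-\bx'|\le \|\frac{\partial \bx}{\partial \by}\|_{L^\infty}\,|\by-\by'|$, hence $|\by(\bx)-\by(\bx')|^{-(n+2s)}\le \|\frac{\partial \bx}{\partial \by}\|_{L^\infty}^{\,n+2s}\,|\bx-\bx'|^{-(n+2s)}$, so that $[u]_{H^s(\mathbb{R}^n_y)}\le \|\frac{\partial \bx}{\partial \by}\|_{L^\infty}^{\,\frac n2+s}\,[\bar u]_{H^s(\mathbb{R}^n_x)}$. Combining this with \eqref{ac1} and the pointwise lower bound $\|\frac{\partial \bx}{\partial \by}\|_{L^\infty}\ge 1$ (valid because $\det(\frac{\partial \bx}{\partial \by})=1$ forces the largest singular value to be at least $1$) absorbs the $L^2$ part into the same constant and gives \eqref{ac2}. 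Estimate \eqref{ac3} then follows by the identical argument with the roles of $\bx$ and $\by$ exchanged, using $|\by-\by'|\le\|\frac{\partial \by}{\partial \bx}\|_{L^\infty}\,|\bx-\bx'|$.

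For the integer case \eqref{ac4} I would use the chain rule $\partial_{x^i}\bar u=(\partial_{y^a}u)\,(\partial_{x^i}y^a)$, which gives the pointwise bound $|\nabla_x\bar u|\le \|\frac{\partial \by}{\partial \bx}\|_{L^\infty}\,|\nabla_y u|$; squaring, integrating, and changing variables with Jacobian $1$ produces $\|\nabla_x\bar u\|_{L^2(\mathbb{R}^n_x)}\le\|\frac{\partial \by}{\partial \bx}\|_{L^\infty}\,\|\nabla_y u\|_{L^2(\mathbb{R}^n_y)}$, and together with \eqref{ac1} this yields the first inequality in \eqref{ac4}; the second is symmetric.

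The heart of the argument, and the one delicate point, is the passage through the Gagliardo seminorm, which reduces everything to a pointwise comparison of the two distance functions $|\bx-\bx'|$ and $|\by-\by'|$. The two key inputs making this clean are the exact cancellation of the Jacobian — so that no extra weight appears in the change of variables and the $L^\infty$ norm of the deformation gradient is itself coordinate-independent — and the Lipschitz bound on the flow furnished by the mean value theorem. The main thing to be careful about is that this nonlocal seminorm characterization is available only for $0<s<1$, which is exactly why that hypothesis is imposed; for $s\ge 1$ the characterization fails and one must differentiate, which is precisely the reason the integer case $H^1$ is isolated and treated separately in \eqref{ac4}.
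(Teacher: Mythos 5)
Your proposal is correct and follows essentially the same route as the paper: the $L^2$ identity by a Jacobian-$1$ change of variables, the fractional estimates via the Gagliardo seminorm combined with the Lipschitz comparison $|\bx-\bx'|\le\|\frac{\partial\bx}{\partial\by}\|_{L^\infty}|\by-\by'|$ and a second Jacobian-$1$ change of variables, and the $H^1$ case by the chain rule. Your explicit remark that $\det(\frac{\partial\bx}{\partial\by})=1$ forces $\|\frac{\partial\bx}{\partial\by}\|_{L^\infty}\gtrsim 1$, so that the $L^2$ part of the inhomogeneous norm can be absorbed into the stated constant, is a small point the paper leaves implicit, but otherwise the two arguments coincide.
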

\begin{proof}
	Firstly, by a change of coordinates, we have
	\begin{equation}\label{ab0}
		\begin{split}
			\| u(t,\cdot) \|_{L^2(\mathbb{R}^n)} &= \int_{\mathbb{R}^n_y} | u(t,\by) |^2 d\by
			\\
			& = \int_{\mathbb{R}^n} | \bar{u}(t,\bx) |^2  \text{det}(\frac{\partial \by}{\partial \bx}) d\bx
			\\
			& = \| \bar{u} \|_{L^2(\mathbb{R}^n_x)}.
		\end{split}
	\end{equation}
	For the homogeneous norm $\dot{H}^s$ ($0<s<1$), cf. \cite{BCD}, we have
	\begin{equation*}
		\begin{split}
			\| u(t,\cdot)  \|^2_{\dot{H}^s (\mathbb{R}^n_y)} &= \int_{\mathbb{R}^n} \int_{\mathbb{R}^n} \frac{|u(t,\by+\bh)-u(t,\by)|^2}{|\bh|^{n+2s}} d\by d\bh.
		\end{split}
	\end{equation*}
	Then we obtain
	\begin{equation}\label{ab1}
		\begin{split}
			\| u \|^2_{\dot{H}^s (\mathbb{R}^n_y)} &= \int_{\mathbb{R}^n} \int_{\mathbb{R}^n} \frac{|\bar{u}(t,x(t,\by+\bh))-\bar{u}(t,x(t,\by))|^2}{|x(t,\by+\bh)-x(t,\by)|^{n+2s}} \cdot \frac{|x(t,\by+\bh)-x(t,\by)|^{n+2s} }{|\bh|^{n+2s} } d\by d\bh
			\\
			&\leq \| \frac{\partial \bx}{\partial \by} \|^{n+2s}_{L^\infty(\mathbb{R}^n_y)} \cdot \int_{\mathbb{R}^n} \int_{\mathbb{R}^n} \frac{|\bar{u}(t,x(t,\by+\bh))-\bar{u}(t,x(t,\by))|^2}{|x(t,\by+\bh)-x(t,\by)|^{n+2s}} d\by d\bh .
		\end{split}
	\end{equation}
	On the other hand, a change of coordinates gives
	\begin{equation}\label{ab2}
		\begin{split}
			& \int_{\mathbb{R}^n} \int_{\mathbb{R}^n} \frac{|\bar{u}(t,x(t,\by+\bh))-\bar{u}(t,x(t,\by))|^2}{|x(t,\by+\bh)-x(t,\by)|^{n+2s}} d\by d\bh 
			\\
			=& \int_{\mathbb{R}^n} \int_{\mathbb{R}^n} \frac{|\bar{u}(t,x(t,\bar{\by}))-\bar{u}(t,x(t,\by))|^2}{|x(t,\bar{\by})-x(t,\by)|^{n+2s}} d\bar{\by} d\by 
			\\
			=& \int_{\mathbb{R}^n} \int_{\mathbb{R}^n} \left( \frac{|\bar{u}(t,x(t,\bar{\by}))-\bar{u}(t,\bx)|^2}{|x(t,\bar{\by})-\bx|^{n+2s}} \right)  \text{det}^{-1}(\frac{\partial \bx}{\partial \by}) d\bar{\by} d\bx 
			\\
			= & \int_{\mathbb{R}^n} \int_{\mathbb{R}^n} \left( \frac{|\bar{u}(t,\bar{\bx})-\bar{u}(t,\bx)|^2}{|\bar{\bx}-\bx|^{n+2s}} \right)  \text{det}^{-1}(\frac{\partial \bx(\bar{\by})}{\partial \bar{\by}}) d{\bar{\bx}} d\bx 
			\\
			= & \int_{\mathbb{R}^n} \int_{\mathbb{R}^n}  \frac{|\bar{u}(\bar{\bx})-\bar{u}(\bx)|^2}{|\bar{\bx}-\bx|^{n+2s}}  d{\bar{\bx}} d\bx .
		\end{split}
	\end{equation}
	where we set $x(t,\bar{\by})=\bar{\bx}$. Putting \eqref{ab0}, \eqref{ab1} and \eqref{ab2} together yields \eqref{ac1} and \eqref{ac2}. The inequalities \eqref{ac3} and \eqref{ac4} follow analogously.
\end{proof}
Finally, we present the following proposition.
\begin{proposition}\label{nE}
	Let $\theta \in (\frac12,1)$, $\varepsilon \in (0,1-\theta]$, and let $s\in \mathbb{R}$. The operator $\widetilde{\square}$ is defined in \eqref{wo}. Consider the Cauchy problem for the linear wave equation
	\begin{equation}\label{linearw}
		\begin{cases}
			& \widetilde{\square} U=F, \quad (t,y)\in \mathbb{R}^{1+n},
			\\
			& (u,\frac{\partial u}{\partial t})|_{t=0} =(f, g).
		\end{cases}
	\end{equation}
	Assume the functions $f,g, F$ satisfy 
	\begin{equation*}
		f\in H^{s-1,1}, \quad g\in H^{s-1,0},\quad F\in H^{s-1,0}_{\theta+\varepsilon-1}.
	\end{equation*}
	Let $\chi$ and $\phi$ be given by \eqref{chi} and \eqref{phi} respectively. For $0<T<1$ define
	\begin{equation}\label{defu}
		u=\chi(t) u_0+ \chi(\frac{t}{T}) u_1+ \chi(\frac{t}{T}) u_2,
	\end{equation}
	where
	\begin{equation}\label{defu0}
		\begin{split}
			u_0= &\cos(tD)f +  D^{-1} \sin(tD)g,
			\\
			F_1= & \phi( T^{\frac12}\Lambda_{-} )F, \quad F_2= ( 1-\phi( T^{\frac12}\Lambda_{-} ) )F,
			\\
			u_1= & \int^t_0  D^{-1} \sin( (t-t')D ) F_1(t')dt',
			\\
			u_2= & \widetilde{\square}^{-1} F_2 \textcolor{red}{.}
		\end{split}
	\end{equation}
Then, the function $u$ defined in \eqref{defu}-\eqref{defu0} is the unique solution of \eqref{linearw} on $[0,T]\times \mathbb{R}^n$ such that $u\in C([0,T];H^{s-1,1}) \cap C^1([0,T];H^{s-1,0})$. Moreover, the following estimate holds:
	\begin{equation}\label{none1}
		|u|_{H^{s-1,1}_\theta} \leq C(\|f\|_{H^{s-1,1}}+ \|g\|_{H^{s-1,0}}+ T^{\frac{\varepsilon}{2}}
		\| F\|_{H^{s-1,0}_{\theta+\varepsilon-1} } ),
	\end{equation}
	where the positive number $C$ only depends on $\chi$ and $\theta$.  
\end{proposition}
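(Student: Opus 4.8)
The plan is to establish the a priori bound \eqref{none1} by estimating the three building blocks of $u$ in \eqref{defu}--\eqref{defu0} separately, and then to read off existence, uniqueness and time-continuity from that bound. Two reductions make the geometry transparent. First, the Fourier multiplier $\Lambda^{s-1}$ commutes with every operator in the construction ($D$, $\square$, $\Lambda_{-}$, the time cutoffs $\chi(t),\chi(t/T)$, and $\phi(T^{\frac12}\Lambda_{-})$), and $\|w\|_{H^{a,b}_\theta}=\|\Lambda^{s-1}w\|_{H^{a-(s-1),b}_\theta}$; applying $\Lambda^{s-1}$ thus reduces everything to the case $s=1$, i.e. to weights $\langle\xi_1\rangle^{b}\langle||\tau|-|\xi_1||\rangle^{\theta}$. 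Second, all these weights and operators depend only on $(t,y_1)$, the transverse frequency $\xi'=(\xi_2,\dots,\xi_n)$ being a spectator, so by Fubini it suffices to prove the one-dimensional $(t,y_1)$ estimate uniformly in $\xi'$ and integrate. Throughout, the $\partial_t$-part of $|\cdot|$ is handled in parallel: the factor $\tau$ produced by $\partial_t$ is comparable to $|\xi_1|$ near the light cone $|\tau|=|\xi_1|$, which is exactly the loss of one power of $\langle\xi_1\rangle$ recorded in \eqref{HS2}.

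For the homogeneous piece $\chi(t)u_0$ I would use the explicit d'Alembert representation $u_0=\tfrac12(e^{itD}+e^{-itD})f+\tfrac{1}{2iD}(e^{itD}-e^{-itD})g$. Each elementary wave $e^{\pm itD}h$ is Fourier-supported on the cone $\tau=\mp|\xi_1|$, so after multiplication by $\chi$ its space-time transform is $\widehat{\chi}(\tau\pm|\xi_1|)\,\widehat{h}(\xi)$. The pointwise inequality $\langle||\tau|-|\xi_1||\rangle\le\langle\tau\pm|\xi_1|\rangle$ then lets the modulation weight $\langle\Lambda_{-}\rangle^{\theta}$ be absorbed into the Schwartz factor $\widehat{\chi}$, whose weighted mass $\int\langle\mu\rangle^{2\theta}|\widehat{\chi}(\mu)|^2\,d\mu$ is a finite constant $C(\chi,\theta)$. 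The operator $D^{-1}\sin(tD)$ has the bounded symbol $\tfrac{\sin(t|\xi_1|)}{|\xi_1|}$, so the apparent singularity at $\xi_1=0$ is harmless and the $g$-contribution is controlled by $\|g\|_{H^{0,0}}$; the terms $\chi'(t)u_0$ and $\partial_t(\chi u_0)$ are analogous. This gives $|\chi(t)u_0|_{H^{0,1}_\theta}\lesssim\|f\|_{H^{0,1}}+\|g\|_{H^{0,0}}$, with no power of $T$.

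The high-modulation piece $u_2=\square^{-1}F_2$ is a pure multiplier estimate. Since $\widetilde{\square w}=-(|\tau|-|\xi_1|)(|\tau|+|\xi_1|)\widetilde{w}$ and $\widetilde{F_2}$ is supported where $\langle\Lambda_{-}\rangle\gtrsim T^{-\frac12}$, the inverse symbol $\big[(|\tau|-|\xi_1|)(|\tau|+|\xi_1|)\big]^{-1}$ is nonsingular there. On that support one has the elementary bound $\tfrac{\langle\xi_1\rangle}{(|\tau|+|\xi_1|)\,||\tau|-|\xi_1||}\lesssim\langle\Lambda_{-}\rangle^{-1}\lesssim T^{\frac{\varepsilon}{2}}\langle\Lambda_{-}\rangle^{\varepsilon-1}$, the last step using $\langle\Lambda_{-}\rangle\gtrsim T^{-\frac12}$ so that $T^{\frac{\varepsilon}{2}}\langle\Lambda_{-}\rangle^{\varepsilon}\gtrsim1$; the factor $\tfrac{\langle\xi_1\rangle}{|\tau|+|\xi_1|}$ is itself $\lesssim1$ (when $|\xi_1|\lesssim1$ large modulation forces $|\tau|\gtrsim T^{-\frac12}$). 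Multiplying by $\langle\xi_1\rangle\langle\Lambda_{-}\rangle^\theta$ yields $\|u_2\|_{H^{0,1}_\theta}\lesssim T^{\frac{\varepsilon}{2}}\|F_2\|_{H^{0,0}_{\theta+\varepsilon-1}}$, and the $\partial_t$-part is the same with $\tfrac{|\tau|}{|\tau|+|\xi_1|}\le1$.

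The main obstacle is the low-modulation Duhamel piece $\chi(t/T)u_1$, where $\widetilde{F_1}$ lives in the region $\langle\Lambda_{-}\rangle\lesssim T^{-\frac12}$ straddling the cone, so $\square^{-1}$ is unavailable and one must exploit the explicit truncated integral $u_1=\tfrac{1}{2iD}\big[e^{itD}\int_0^t e^{-it'D}F_1\,dt'-e^{-itD}\int_0^t e^{it'D}F_1\,dt'\big]$. Evaluating a single half-wave mode shows the particular and homogeneous contributions nearly cancel at zero modulation, producing linear growth $\sim t\,e^{it|\xi_1|}$ that the cutoff $\chi(t/T)$ truncates to size $\sim T$; for modulations up to $T^{-\frac12}$ one estimates the time Fourier transform of $\chi(t/T)\int_0^t(\cdots)$ directly, using $\widehat{\chi(\cdot/T)}(\sigma)=T\widehat{\chi}(T\sigma)$. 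Balancing this $T$-gain against the modulation deficit $\langle\Lambda_{-}\rangle^{1-\varepsilon}\le(T^{-\frac12})^{1-\varepsilon}$ between the target weight $\langle\Lambda_{-}\rangle^{\theta}$ and the source weight $\langle\Lambda_{-}\rangle^{\theta+\varepsilon-1}$ is precisely what produces the factor $T^{\frac{\varepsilon}{2}}$; this delicate bookkeeping, the heart of the proof, follows the standard $X^{s,b}$ time-localization technique of Klainerman--Machedon \cite{KM1} and Selberg \cite{Selberg}. Summing the three bounds and using $\|F_i\|_{H^{0,0}_{\theta+\varepsilon-1}}\le\|F\|_{H^{0,0}_{\theta+\varepsilon-1}}$ (the cutoffs being bounded multipliers) gives \eqref{none1}. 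Finally, since both cutoffs equal $1$ on $[0,T]$ and $\square u_0=0,\ \square u_1=F_1,\ \square u_2=F_2$, the function $u$ solves $\square u=F$ there with the Cauchy data carried by $u_0$; the bound \eqref{none1} together with the embedding $H^{0,1}_\theta\hookrightarrow C_tH^{0,1}$ for $\theta>\tfrac12$ (Lemma \ref{QR1} and its $\partial_t$-version) yields $u\in C([0,T];H^{s-1,1})\cap C^1([0,T];H^{s-1,0})$, while uniqueness follows by applying \eqref{none1} to the difference of two solutions, which has zero data and forcing.
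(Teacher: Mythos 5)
Your decomposition and the mechanisms you assign to each piece coincide with the paper's: the homogeneous part is handled exactly as in Lemma \ref{CQR} (cone-supported Fourier transform times $\widehat{\chi}$, with $D^{-1}\sin(tD)$ a bounded symbol), and your treatment of $u_2=\square^{-1}F_2$ matches Lemma \ref{mF}, with the added merit that you extract the factor $T^{\frac{\varepsilon}{2}}$ transparently from $\langle\Lambda_{-}\rangle\gtrsim T^{-\frac12}$ on $\mathrm{supp}\,\widetilde{F_2}$ (the paper produces this factor rather abruptly in \eqref{pt20}). The preliminary reduction via $\Lambda^{s-1}$ and Fubini in the transverse frequencies is sound and makes the degeneracy of $\square$ harmless.

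The gap is in the piece $\chi(t/T)u_1$, which you correctly identify as the heart of the matter but then dispatch by citation to ``standard $X^{s,b}$ time-localization.'' Two concrete issues. First, your representation $u_1=\frac{1}{2\mathrm{i}D}\bigl[\mathrm{e}^{\mathrm{i}tD}\int_0^t\mathrm{e}^{-\mathrm{i}t'D}F_1\,dt'-\mathrm{e}^{-\mathrm{i}tD}\int_0^t\mathrm{e}^{\mathrm{i}t'D}F_1\,dt'\bigr]$ splits the bounded kernel $\sin((t-t')|\xi_1|)/|\xi_1|$ into two halves, each singular on the entire hyperplane $\{\xi_1=0\}$, so ``evaluating a single half-wave mode'' requires a separate low-frequency argument that you never supply; the paper handles this in Lemma \ref{mE} by splitting $F_1=F_{1,1}+F_{1,2}$ at $|\xi_1|=c_0\sim T^{-\frac12}$ and writing, for low frequencies, $D^{-1}\sin(tD)=\int_0^1 t\,\mathrm{e}^{\mathrm{i}(2\rho-1)tD}\,d\rho$, trading $D^{-1}$ for a factor of $t$. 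Second, the quantitative balance you assert --- cutoff gain $T$ against the modulation deficit $c_0^{1-\theta-\varepsilon}$ against the output weight $\langle\Lambda_{-}\rangle^{\theta}$ --- is exactly the bookkeeping that occupies Lemmas \ref{mE}--\ref{oE} and \eqref{pt5}--\eqref{pt18}: the paper Taylor-expands $(\mathrm{e}^{\mathrm{i}t(\tau\mp|\xi_1|)}-1)/(\mathrm{i}(\tau\mp|\xi_1|))$ into $\sum_j\frac{t^j}{j!}\mathrm{i}^{j-1}(\tau\mp|\xi_1|)^{j-1}$, controls each term by \eqref{cr0} and \eqref{cr5} with constants $(3c_0)^{j-\frac12}\|t^j\chi_T\|_{H^\theta}/j!$, sums the series together with the remainders $R_\pm$, and only after rescaling $\chi\to\chi_T$ verifies that a positive power of $T$ survives (it lands at $T^{\frac14}\le T^{\frac{\varepsilon}{2}}$). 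None of this follows automatically from the heuristic, so as written the central estimate is asserted rather than proved. (A minor imprecision you share with the paper: $u_2=\square^{-1}F_2$ does not actually have vanishing Cauchy data, so $u$ in \eqref{defu} matches $(f,g)$ only up to a correction term; since the paper glosses over this too, I do not count it against you.)
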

\begin{remark}
 We note that the function \( u \) is a solution of \eqref{linearw} on \([0,T] \times \mathbb{R}^n\), and that the $H^{s-1,1}_\theta$ norm of \( u \) is considered over the entire space-time domain \(\mathbb{R}^{1+n}\). The proof of Proposition \ref{nE} is presented in the appendix, Section \ref{App}. This proof is a minor modification of Theorem 12 from Selberg's paper \cite{Selberg}.
\end{remark} 

\section{Bilinear {estimate} of null forms}\label{be}
In this section, we establish a key bilinear estimate for the null form $Q_0(f, g)$, where $Q_0$ is defined in equation \eqref{nuf}.
\begin{Lemma}\label{eQR}
Let $s>\frac{n+1}{2}, \theta\in (\frac12,1)$ and $n \geq 2$. Suppose $f$ and $g$ belong to the space $H_{\theta}^{s-1,1}$. Then the following estimate holds:
	\begin{equation}\label{er2}
		\begin{split}
			\| Q_0(f,g)\|_{{H_0^{s-1,0}}} 
			\lesssim  & | f |_{H_{\theta}^{s-1,1}}  | g|_{H_{\theta}^{s-1,1}} .
		\end{split}
	\end{equation} 
\end{Lemma}
\begin{proof}
	By applying the Fourier transform on the space-time $\mathbb{R}^{1+n}$, we obtain
	\begin{equation}\label{kk2}
		\begin{split}
			\widetilde{Q_0(f,g)} (\tau,\xi)= \int_{\mathbb{R}^{n}} \int_{\mathbb{R}}  q_0 (\tau-\lambda,\xi-\eta,\lambda,\eta) 
			\widetilde{f} (\tau-\lambda,\xi-\eta)\widetilde{g}(\lambda,\eta) d\lambda d\eta,
		\end{split}
	\end{equation}
	where 
	\begin{equation*}
		q_0 (\tau-\lambda,\xi-\eta,\lambda,\eta) =(\tau- \lambda) \lambda - (\xi_1 - \eta_1) \eta_1.
	\end{equation*}
	From identity 
	\begin{equation*}
		(\tau-\lambda)\lambda=\frac12 \{ \tau^2-(\tau-\lambda)^2-\lambda^2\},
	\end{equation*}
	it follows that
	\begin{equation*}
		(\tau- \lambda) \lambda - (\xi_1 - \eta_1) \eta_1=\frac12 \left\{  \tau^2- \xi_1^2- (\tau-\lambda)^2 + (\xi_1-\eta_1)^2 -\lambda^2 + \eta_1^2  \right\}.
	\end{equation*}
Then, we deduce that
	\begin{equation}\label{kk1}
	\begin{split}
		 | q_0 (\tau-\lambda,\xi-\eta,\lambda,\eta)  | 
	\lesssim 	&  \left< |\tau|-|\xi_1| \right>  \left< |\tau|+|\xi_1| \right>  + \left< |\tau-\lambda|+|\xi_1-\eta_1| \right> \left< |\tau-\lambda|-|\xi_1-\eta_1| \right>
		\\
		& \ 
		+ \left< |\lambda|+|\eta_1| \right> \left< |\lambda|-|\eta_1| \right> .
	\end{split}	
	\end{equation}
On the other hand, we also have the following inequality:
	\begin{equation}\label{kk3}
	\begin{split}
		| q_0 (\tau-\lambda,\xi-\eta,\lambda,\eta)  | 
		\lesssim 	&  \left< |\tau-\lambda|+|\xi_1-\eta_1| \right> \left< |\lambda|+|\eta_1| \right>  .
	\end{split}	
\end{equation}
Applying the interpolation formula to \eqref{kk1} and \eqref{kk3} yields
\begin{equation}\label{kk5}
\begin{split}
	& | q_0 (\tau-\lambda,\xi-\eta,\lambda,\eta)  | 
		\\
		\lesssim 	&  \left< |\tau-\lambda|+|\xi_1-\eta_1| \right>^{\varepsilon}  \left< |\lambda|+|\eta_1| \right>^{\varepsilon} 
\cdot \big[   \left< |\tau|+|\xi_1| \right>^{1-\varepsilon} \left< |\tau|-|\xi_1| \right>^{1-\varepsilon}  
\\
& + \left< |\tau-\lambda|+|\xi_1-\eta_1| \right>^{1-\varepsilon} \left< |\tau-\lambda|-|\xi_1-\eta_1| \right>^{1-\varepsilon} + \left< |\lambda|+|\eta_1| \right>^{1-\varepsilon} \left< |\lambda|-|\eta_1| \right>^{1-\varepsilon}  
\big] 
\\
=& \left< |\tau|+|\xi_1| \right>^{1-\varepsilon} \left< |\tau|-|\xi_1| \right>^{1-\varepsilon} \left< |\tau-\lambda|+|\xi_1-\eta_1| \right>^{\varepsilon}  \left< |\lambda|+|\eta_1| \right>^{\varepsilon} 
\\
& + \left< |\tau-\lambda|+|\xi_1-\eta_1| \right> \left< |\tau-\lambda|-|\xi_1-\eta_1| \right>^{1-\varepsilon}  \left< |\lambda|+|\eta_1| \right>^{\varepsilon}   
\\
& + \left< |\tau-\lambda|+|\xi_1-\eta_1| \right>^{\varepsilon}  \left< |\lambda|+|\eta_1| \right> \left< |\lambda|-|\eta_1| \right>^{1-\varepsilon} ,
\end{split}	
\end{equation}
where we take $0<\varepsilon<\frac12$. By using \eqref{kk2} and \eqref{kk5}, we obtain
\begin{equation*}
	| Q_0(f,g) | \lesssim | \Lambda_{+}^{1-\varepsilon} \Lambda_{-}^{1-\varepsilon} \big(  \Lambda_{+}^{\varepsilon} f \cdot \Lambda_{+}^{\varepsilon} g \big) | + | \Lambda_{+} \Lambda_{-}^{1-\varepsilon} f \cdot \Lambda_{+}^{\varepsilon} g |
	+ | \Lambda_{+}^{\varepsilon} f \cdot \Lambda_{+} \Lambda_{-}^{1-\varepsilon} g | .
\end{equation*}
Note
\begin{equation*}
\begin{split}
\Lambda_{+}^{1-\varepsilon} \Lambda_{-}^{1-\varepsilon} \big(  \Lambda_{+}^{\varepsilon} f \cdot \Lambda_{+}^{\varepsilon} g \big) \approx
& \Lambda_{+} \Lambda_{-}^{1-\varepsilon} f \cdot \Lambda_{+}^{\varepsilon} g
+ \Lambda_{+}^{\varepsilon} f \cdot \Lambda_{+} \Lambda_{-}^{1-\varepsilon} g
\\
& + \Lambda_{-}^{1-\varepsilon} \Lambda_{+}^{\varepsilon} f \cdot \Lambda_{+}g + \Lambda_{+}f \cdot \Lambda_{-}^{1-\varepsilon} \Lambda_{+}^{\varepsilon}  g.
\end{split}	 
\end{equation*}
This implies
\begin{equation}\label{kk7}
	\begin{split}
		\| Q_0(f,g) \|_{L^2(\mathbb{R}^{1+n})} \lesssim & \| \Lambda_{-}^{1-\varepsilon} \Lambda_{+}^{\varepsilon} f \cdot \Lambda_{+}g \|_{L^2(\mathbb{R}^{1+n})}  + \|\Lambda_{+}f \cdot \Lambda_{-}^{1-\varepsilon} \Lambda_{+}^{\varepsilon}  g \|_{L^2(\mathbb{R}^{1+n})} 
		\\
		& + \| \Lambda_{+} \Lambda_{-}^{1-\varepsilon} f \cdot \Lambda_{+}^{\varepsilon} g \|_{L^2(\mathbb{R}^{1+n})}
		+ \| \Lambda_{+}^{\varepsilon} f \cdot \Lambda_{+} \Lambda_{-}^{1-\varepsilon} g \|_{L^2(\mathbb{R}^{1+n})}.
	\end{split}
\end{equation}
Take $\theta = 1 - \varepsilon$. Then we have $\theta \in \left(\frac{1}{2}, 1\right)$. Note that $s > \frac{n+1}{2}$. By H\"older's inequality and Sobolev embeddings, it follows that
\begin{equation}\label{kk9}
	\begin{split}
		\| \Lambda_{-}^{1-\varepsilon} \Lambda_{+}^{\varepsilon} f \cdot \Lambda_{+}g \|_{L^2(\mathbb{R}^{1+n})}  \lesssim & 
		\| \Lambda_{-}^{1-\varepsilon} \Lambda_{+}^{\varepsilon} f \|_{L^\infty_{t}L^2(\mathbb{R}^{n})} \|\Lambda_{+}g \|_{L^2_{t}L^{\infty}(\mathbb{R}^{n})}
		\\
		\lesssim & 
		\| \Lambda_{+}^{1-\varepsilon} \left( \Lambda_{-}^{1-\varepsilon} \Lambda_{+}^{\varepsilon} f \right) \|_{L^2_{t}L^2(\mathbb{R}^{n})} \|\Lambda_{-}^{1-\varepsilon} \Lambda^{s-1} \left( \Lambda_{+}g  \right) \|_{L^2_{t}L^2(\mathbb{R}^{n})}
		\\
		= & \| \Lambda_{+} \Lambda_{-}^{1-\varepsilon}  f \|_{L^2(\mathbb{R}^{1+n})} \|\Lambda_{-}^{1-\varepsilon} \Lambda^{s-1}  \Lambda_{+}g   \|_{L^2(\mathbb{R}^{1+n})}
		\\
		\lesssim & |f|_{H^{0,1}_{\theta}}  |g|_{H^{s-1,1}_{\theta}} .
	\end{split}
\end{equation}
Similarly, using H\"older's inequality and Sobolev embeddings again, we obtain
\begin{equation}\label{kk11}
	\begin{split}
	\|\Lambda_{+}f \cdot \Lambda_{-}^{1-\varepsilon} \Lambda_{+}^{\varepsilon}  g \|_{L^2(\mathbb{R}^{1+n})} 
	\lesssim & \|\Lambda_{+}f \|_{L^\infty_{t}L^2(\mathbb{R}^{n})} \| \Lambda_{-}^{1-\varepsilon} \Lambda_{+}^{\varepsilon}  g \|_{L^2_{t}L^{\infty}(\mathbb{R}^{n})} 
	\\
	\lesssim & 
	\| \Lambda_{-}^{1-\varepsilon} \Lambda_{+}f  \|_{L^2_{t}L^2(\mathbb{R}^{n})} \| \Lambda_{+}^{1-\varepsilon} \Lambda^{s-1}\left( \Lambda_{-}^{1-\varepsilon} \Lambda_{+}^{\varepsilon}  g \right) \|_{L^2_{t}L^{2}(\mathbb{R}^{n})}
	\\
	= & 
	\| \Lambda_{-}^{1-\varepsilon} \Lambda_{+}f  \|_{L^2(\mathbb{R}^{1+n})} \| \Lambda_{+} \Lambda^{s-1}  \Lambda_{-}^{1-\varepsilon}   g \|_{L^{2}(\mathbb{R}^{1+n})}
	\\
	\lesssim &
	 |f|_{H^{0,1}_{\theta}}  |g|_{H^{s-1,1}_{\theta}} ,
	\end{split}
\end{equation}
and
\begin{equation}\label{kk13}
	\begin{split}
		\| \Lambda_{+} \Lambda_{-}^{1-\varepsilon} f \cdot \Lambda_{+}^{\varepsilon} g \|_{L^2(\mathbb{R}^{1+n})} \lesssim & 
		\| \Lambda_{+} \Lambda_{-}^{1-\varepsilon} f \|_{L^2(\mathbb{R}^{1+n})} \| \Lambda_{+}^{\varepsilon} g \|_{L^\infty(\mathbb{R}^{1+n})}
		\\
		\lesssim & \| \Lambda_{+} \Lambda_{-}^{1-\varepsilon} f \|_{L^2(\mathbb{R}^{1+n})} \| \Lambda^{s-1} \Lambda_{-}^{1-\varepsilon}\Lambda_{+}^{1-\varepsilon} \left( \Lambda_{+}^{\varepsilon} g \right) \|_{L^2(\mathbb{R}^{1+n})}
		\\
		\lesssim & |f|_{H^{0,1}_{\theta}}  |g|_{H^{s-1,1}_{\theta}}  .
	\end{split}
\end{equation}
In a similar way, we also obtain
\begin{equation}\label{kk15}
	\begin{split}
		\| \Lambda_{+}^{\varepsilon} f \cdot \Lambda_{+} \Lambda_{-}^{1-\varepsilon} g \|_{L^2(\mathbb{R}^{1+n})} 
		\lesssim & \| \Lambda_{+}^{\varepsilon} f \|_{L^\infty_{t,y_1}L^2(\mathbb{R}^{n-1})} \| \Lambda_{+} \Lambda_{-}^{1-\varepsilon} g \|_{L^2_{t,y_1}L^\infty(\mathbb{R}^{n-1})} 
		\\
		\lesssim & \|  \Lambda_{+}^{1-\varepsilon} \Lambda_{-}^{1-\varepsilon} \Lambda_{+}^{\varepsilon} f \|_{L^2(\mathbb{R}^{1+n})} \| \Lambda^{s-1} \left( \Lambda_{+} \Lambda_{-}^{1-\varepsilon} g \right) \|_{L^2(\mathbb{R}^{1+n})} 
		\\
		\lesssim  & |f|_{H^{0,1}_{\theta}}  |g|_{H^{s-1,1}_{\theta}}   .
	\end{split}
\end{equation}
Submitting \eqref{kk9}, \eqref{kk11}, \eqref{kk13}, and \eqref{kk15} to \eqref{kk7}, we have
\begin{equation*}
	\begin{split}
		\| Q_0(f,g) \|_{L^2(\mathbb{R}^{1+n})} \lesssim &  |f|_{H^{0,1}_{\theta}}  |g|_{H^{s-1,1}_{\theta}}  .
	\end{split}
\end{equation*}
This implies that
\begin{equation*}
	\begin{split}
		\| Q_0(f,g) \|_{H^{s-1,0}_0} \lesssim &  |f|_{H^{s-1,1}_{\theta}}  |g|_{H^{s-1,1}_{\theta}}  .
	\end{split}
\end{equation*}
Therefore, the estimate \eqref{er2} holds, and we have completed the proof of Lemma \ref{eQR}.
\end{proof}

\section{Proof of Theorem \ref{thm}}\label{PT}
In this section we prove Theorem \ref{thm} via the contraction mapping principle. Since the unknown in \eqref{Lag5}--\eqref{Lag5B} is the matrix function $\bH$, it is convenient to introduce $\bG=\bH-\bE$, where $\bE$ is an $n\times n$ identity matrix. From \eqref{Lag5}--\eqref{Lag5B} we then obtain the following system for $\bG$:
\begin{equation}\label{p0}
	\begin{cases}
		& \widetilde{\square} G^{ia}=\frac{\partial }{\partial y^a} ( \frac{\partial q}{\partial x^i} ) ,
		\\
		& \Delta_x {q}=(\bH^{-1})^{kl}(\bH^{-1})^{mj} Q_0(G^{mk}, G^{jl}) ,
		\\
		& 	 (G^{ia}, \frac{\partial G^{ia}}{\partial t})|_{t=0}=(0, \frac{\partial v_0^{i}}{\partial y^a}).
	\end{cases}
\end{equation}
where $\bH^{-1}=(\bG+\bE)^{-1}$ satisfies \eqref{Lag5B}. Let $s$ be as stated in Theorem \ref{thm}, and let the two positive constants be $$\theta=\frac34, \ \varepsilon=\frac14.$$ Define a solution space
\begin{equation}\label{Xd}
	 X_{s,\theta}=\left\{ u\in \mathcal{S}'(\mathbb{R}^{1+n}): |u|_{H^{s-1,1}_\theta}\leq C_1 \|\bv_0\|_{H^{s}} \right\},
\end{equation}
where $C_1$ will be defined later. For $\bG \in X_{s,\theta}$, we define the map $\text{M}$ as follows:
\begin{equation}\label{t0}
	\begin{split}
		\text{M} G^{ia}= 	&\chi(t) \left\{   D^{-1} \sin(tD) (\frac{\partial v^i_0}{\partial y^a}) \right\} +  { \widetilde{\square}^{-1}} ( 1-\phi( T^{\frac12}\Lambda_{-} ) )  \frac{\partial }{\partial y^a}(\frac{\partial q}{\partial x_i})
		\\
		& + \chi(\frac{t}{T}) \int^t_0 D^{-1} \sin( (t-t')D ) \left\{  \phi( T^{\frac12}\Lambda_{-} ) \frac{\partial }{\partial y^a}(\frac{\partial q}{\partial x_i}) \right\} (t')dt',
	\end{split}	
\end{equation}
where $q$ satisfies (please refer \eqref{Lag5A})
\begin{equation}\label{t00}
	\Delta_x q=(\bH^{-1})^{kl}(\bH^{-1})^{mj}  Q_0(G^{mk}, G^{jl}).
\end{equation}
In \eqref{t0}, we can also calculate that $  \frac{\partial }{\partial y^a}(\frac{\partial q}{\partial x_i})= \frac{\partial x^j}{\partial y^a} \frac{\partial^2 q}{\partial x_i \partial x_j} =(\delta^{ja}+ G^{ja}) (-\Delta_x)^{-1}\partial_{x_i} \partial_{x_j} \Delta_x q$. Due to \eqref{t00}, we have
\begin{equation}\label{ty}
	\frac{\partial }{\partial y^a}(\frac{\partial q}{\partial x_i})= (\delta^{ja}+ G^{ja}) (-\Delta_x)^{-1}\partial_{x_i} \partial_{x_j} \left\{ ((\bG+\bE)^{-1})^{kl}((\bG+\bE)^{-1})^{mj}  Q_0(G^{mk}, G^{jl}) \right\}.
\end{equation}
Combining \eqref{t0} and \eqref{ty}, the map $\text{M}$ depends on $\bG$ explicitly. By \eqref{t0}, we can calculate out
\begin{equation*}\label{t1}
	\begin{cases}
		& \widetilde{\square}  \text{M} G^{ia}=\frac{\partial }{\partial y^a} ( \frac{\partial q}{\partial x^i} ), \quad \quad [0,T]\times \mathbb{R}^n,
		\\
		& (\text{M} G^{ia}, \frac{\partial } {\partial t} \text{M} G^{ia})|_{t=0}=(0,\frac{\partial v^i_0}{\partial y^a}). 
	\end{cases}
\end{equation*}
Using Proposition \ref{nE}, we derive
\begin{equation}\label{t2}
	\begin{split}
		| \text{M} \bG |_{H^{s-1,1}_\theta} \leq & C( \| \frac{\partial \bv_0}{\partial y} \|_{H^{s-1}} + T^{\frac{\varepsilon}{2}}  \| \frac{\partial }{\partial y}(\frac{\partial q}{\partial x}) \|_{H^{s-1,0}_{\theta+\varepsilon-1}}).
	\end{split}
\end{equation}
Thanks to $\frac{\partial }{\partial y}(\frac{\partial q}{\partial x})= \frac{\partial }{\partial x}(\frac{\partial q}{\partial x}) \cdot \frac{\partial x}{\partial y} $, we get
\begin{equation}\label{t20}
	\frac{\partial }{\partial y}(\frac{\partial q}{\partial x})
	= \bH\cdot \frac{\partial^2 q}{\partial x^2}
	= (\bE+\bG)\frac{\partial^2 q}{\partial x^2}.
\end{equation}
Substituting \eqref{t20} to \eqref{t2}, we can obtain
\begin{equation*}
	\begin{split}
		| \text{M} \bG |_{H^{s-1,1}_{\theta}} \leq  & C \left\{ \|\bv_0\|_{H^{s}}+ T^{\frac{\varepsilon}{2}}  ( \| \frac{\partial^2 q}{\partial^2 x} \|_{H^{s-1,0}_{\theta+\varepsilon-1}}+ \| \bG \frac{\partial^2 q}{\partial^2 x} \|_{H^{s-1,0}_{\theta+\varepsilon-1}} ) \right\} .
	\end{split}
\end{equation*}
For $\theta+ \varepsilon -1 = 0$, applying Lemma \ref{dQR} yields
\begin{equation}\label{t3}
	\begin{split}
		| \text{M} \bG |_{H^{s-1,1}_{\theta}} 
		\leq  & C \left\{  \|\bv_0\|_{H^{s}} + T^{\frac{\varepsilon}{2}}  (1+ | \bG |_{H^{s-1,1}_{\theta}}) \| \frac{\partial^2 q}{\partial^2 x} \|_{H^{s-1,0}_{\theta+\varepsilon-1} } \right\}.
	\end{split}
\end{equation}
For $\theta+ \varepsilon -1 = 0$, we also have
\begin{equation}\label{T0}
	\| \frac{\partial^2 q}{\partial^2 x} \|_{s-1,\theta+ \varepsilon -1 } = \| \frac{\partial^2 q}{\partial^2 x} \|_{s-1,0}=\| \frac{\partial^2 q}{\partial^2 x} \|_{L^2_{t}H^{s-1}(\mathbb{R}^n_y)}.
\end{equation}
Considering $s\in (\frac{n+1}{2}, \frac{n}{2}+1]$, we divide the discussion into two cases.

\textit{Case 1}: $n=2$. In this case, $s-1\in(\frac{1}{2},1]$, using Lemma \ref{LD4}, we can see
\begin{equation}\label{t4g}
	\begin{split}
		\| \frac{\partial^2 q}{\partial^2 x} \|_{L^2_{t}H^{s-1}(\mathbb{R}^n_y) } 
		\leq &  C (1+ \| \frac{\partial \bx}{\partial \by} \|^{\frac{n}{2}+s}_{L^\infty({\mathbb{R} \times \mathbb{R}^n_y) } } ) \| \frac{\partial^2 \bar{q}}{\partial^2 x} \|_{L^2_tH^{s-1}(\mathbb{R}^n_x)}
		\\
		\leq & C (1+ \| \frac{\partial \bx}{\partial \by} \|^{\frac{n}{2}+s}_{L^\infty({\mathbb{R} \times \mathbb{R}^n_y) } } )
		\| \Delta_x \bar{q}\|_{L^2_t H^{s-1}(\mathbb{R}^n_x)}
		\\
		\leq &  C (1+ \| \frac{\partial \bx}{\partial \by} \|^{\frac{n}{2}+s}_{L^\infty({\mathbb{R} \times \mathbb{R}^n_y) } } )
		\| \Delta_x {q}\|_{L^2_t H^{s-1}(\mathbb{R}^n_y)}  \| \frac{\partial \by}{\partial \bx} \|^{\frac{n}{2}+s}_{L^\infty(\mathbb{R}^n_y)} 
		\\
		\leq & C (1+|\bG|^{\frac{n}{2}+s}_{H^{s-1,1}_\theta} ) (1+ |\bG|^{2(n-1)}_{H^{s-1,1}_\theta} ) \| Q_0(\bG,\bG) \|_{H^{s-1,0}_0} (1+|\bG|_{L^\infty(\mathbb{R}^n_y)}^{n-1})^{\frac{n}{2}+s} 
		\\
		\leq & C (1+|\bG|^{n^2+3n}_{H^{s-1,1}_\theta}  ) \| Q_0(\bG,\bG) \|_{H^{s-1,0}_0}.
	\end{split}
\end{equation}
\textit{Case 2}: $n=3,4$. In this case, $s-2\in(0,1]$, using Lemma \ref{LD4} again, we obtain
\begin{equation*}\label{t4a}
	\begin{split}
		\| \frac{\partial^2 q}{\partial^2 x} \|_{L^2_{t}H^{s-1}(\mathbb{R}^n_y) } =& \| \frac{\partial}{\partial y} (\frac{\partial^2 q}{\partial^2 x}) \cdot  (\bG+\bE) \|_{L^2_{t}H^{s-2}(\mathbb{R}^n_y) } + \| \frac{\partial^2 q}{\partial^2 x} \|_{L^2_{t}L^{2}(\mathbb{R}^n_y) } 
		\\
		\leq & \| \frac{\partial^3 q}{\partial^3 x}  \|_{L^2_{t}H^{s-2}(\mathbb{R}^n_y) } (1+|\bG|^2_{H^{s-1,1}_\theta}) + \| \frac{\partial^2 q}{\partial^2 x} \|_{L^2_{t}L^{2}(\mathbb{R}^n_y) }.
	\end{split}
\end{equation*}
For the term $\frac{\partial^3 q}{\partial^3 x}$, we can derive
\begin{equation}\label{t4b}
	\begin{split}
		\| \frac{\partial^3 q}{\partial^3 x}  \|_{L^2_{t}H^{s-2}(\mathbb{R}^n_y) } \leq C \| \frac{\partial \bx}{\partial \by} \|^{\frac{n}{2}+s-2}_{L^\infty({\mathbb{R} \times \mathbb{R}^n_y) } } \| \frac{\partial^3 \bar{q}}{\partial^3 x} \|_{L^2_tH^{s-2}(\mathbb{R}^n_x)},
	\end{split}
\end{equation}
and
\begin{equation}\label{t4d}
	\begin{split}
		\| \frac{\partial^3 \bar{q}}{\partial^3 x} \|_{L^2_tH^{s-2}(\mathbb{R}^n_x)}\leq & \| \frac{\partial }{\partial x} \Delta_x \bar{q} \|_{L^2_tH^{s-2}(\mathbb{R}^n_x)}
		\\
		= & \| \frac{\partial }{\partial y} \Delta_x \bar{q} \cdot (\bG+\bE)^{-1} \|_{L^2_tH^{s-2}(\mathbb{R}^n_x)}
		\\
		\leq 
		& C\| \frac{\partial \by}{\partial \bx} \|^{\frac{n}{2}+s-2}_{L^\infty({\mathbb{R} \times \mathbb{R}^n_x) }} \| \frac{\partial }{\partial y} \Delta_x \bar{q} \cdot (\bG+\bE)^{-1} \|_{L^2_tH^{s-2}(\mathbb{R}^n_y)} 
		\\
		\leq 
		& C\| \frac{\partial \by}{\partial \bx} \|^{\frac{n}{2}+s-2}_{L^\infty({\mathbb{R} \times \mathbb{R}^n_y) } }(1+|\bG|^{n-1}_{H^{s-1,1}_\theta}) \| \Delta_x {q} \|_{L^2_tH^{s-1}(\mathbb{R}^n_y)}.
	\end{split}
\end{equation}
Above, we use the embedding $H^{s-1,1}_{\theta}(\mathbb{R}^{1+n}) \hookrightarrow L^\infty(\mathbb{R}^{1+n})$, which has proved in Lemma \ref{dQR}. Inserting \eqref{t4d} into \eqref{t4b}, we obtain
\begin{equation}\label{t4m}
	\begin{split}
		\| \frac{\partial^3 {q}}{\partial^3 x} \|_{L^2_tH^{s-2}(\mathbb{R}^n_y)}\leq  C(1+|\bG|^{n^2+3n}_{H^{s-1,1}_\theta}) \| \Delta_x {q} \|_{L^2_tH^{s-1}(\mathbb{R}^n_y)}.
	\end{split}
\end{equation}
Moreover, we also have
\begin{equation}\label{t4c}
	\begin{split}
		\| \frac{\partial^2 q}{\partial^2 x} \|_{L^2_{t}L^{2}(\mathbb{R}^n_y) } 
		= 
		\| \Delta_x \bar{q}\|_{L^2_t L^{2}(\mathbb{R}^n_x)}
		=
		\| \Delta_x {q}\|_{L^2_t L^{2}(\mathbb{R}^n_y)}.
	\end{split}
\end{equation}
Combing \eqref{t4m} and \eqref{t4c}, we conclude that
\begin{equation}\label{t4f}
	\begin{split}
		\| \frac{\partial^2 q}{\partial^2 x} \|_{L^2_{t}H^{s-1}(\mathbb{R}^n_y) } 
		\leq & C  (1+ {|\bG|^{n^2+3n}_{H^{s-1,1}_\theta} } ) \| Q_0(\bG,\bG) \|_{H^{s-1,0}_0}.
	\end{split}
\end{equation}
By \eqref{t4g} and \eqref{t4f}, in both $n=2$ and $n=3,4$, we prove
\begin{equation}\label{t4}
	\begin{split}
		\| \frac{\partial^2 q}{\partial^2 x} \|_{L^2_{t}H^{s-1}(\mathbb{R}^n_y) } 
		\leq & C (1+  |\bG|^{n^2+3n}_{H^{s-1,1}_\theta}  ) \| Q_0(\bG,\bG) \|_{H^{s-1,0}_0}.
	\end{split}
\end{equation}
Due to \eqref{T0} and \eqref{t4}, it implies
\begin{equation}\label{t70}
	\| \frac{\partial^2 q}{\partial^2 x} \|_{s-1,\theta+ \varepsilon -1 }\leq  C  (1+  |\bG|^{n^2+3n}_{H^{s-1,1}_\theta}  ) \| Q_0(\bG,\bG) \|_{H^{s-1,0}_0}.
\end{equation}
Applying Lemma \ref{eQR} yields:
\begin{equation}\label{t5}
	\| Q_0(\bG,\bG) \|_{H^{s-1,0}_0} \leq C |\bG|^2_{H^{s-1,1}_\theta}. 
\end{equation}
Combining \eqref{t3}, \eqref{t70}, and \eqref{t5}, it yields 
\begin{equation}\label{t6}
	\begin{split}
		| \text{M} \bG |_{H^{s-1,1}_{\theta}} \leq   & C \|\bv_0\|_{H^{s}} 
		+ CT^{\frac{\varepsilon}{2}} (1+|\bG|^{n^2+3n+\textcolor{red}{3}}_{{H^{s-1,1}_\theta}}) .
	\end{split}
\end{equation}
Taking $C_1=2C$ in \eqref{Xd}, and choose 
\begin{equation*}\label{CT}
	T=\left\{ \frac{({1+\|\bv_0\|_{H^{s}} })}{(1+(C+1)^{n^2+3n+3}{({1+\|\bv_0\|_{H^{s}} })}^{n^2+3n+3})} \right\}^{\frac{2}{\varepsilon}}.
\end{equation*}
Using \eqref{t6}, we get
\begin{equation}\label{t7}
	\begin{split}
		| \text{M} \bG |_{H^{s-1,1}_{\theta}} \leq   & 2C \|\bv_0\|_{H^{s}} =C_1\|\bv_0\|_{H^{s}} .
	\end{split}
\end{equation}
By \eqref{t7}, $\text{M}$ is a map from $X_{s,\theta}$ to $X_{s,\theta}$. Next, we will prove that $\text{M}$ is a contraction map in $X_{s,\theta}$. For
\begin{equation*}
	Q_0(\bG,\bG)-Q_0(\bJ,\bJ)=Q_0(\bG-\bJ,\bG)+Q_0({\bJ},\bG-\bJ),
\end{equation*}
we therefore conclude
\begin{equation}\label{t8}
	\begin{split}
		| \text{M} \bG -  \text{M} \bJ |_{H^{s-1,1}_{\theta}} \leq   & C  T^{\frac{\varepsilon}{2}}  |\bG - \bJ|_{H^{s-1,1}_{\theta}} \left(1+|\bG|^{n^2+3n+2}_{H^{s-1,1}_{\theta}} +|\bJ|^{n^2+3n+2}_{H^{s-1,1}_{\theta}} \right).
	\end{split}
\end{equation}
When $T$ is sufficiently small, using \eqref{t8}, we shall get 
\begin{equation*}\label{t9}
	\begin{split}
		| \text{M} \bG -  \text{M} \bJ |_{H^{s-1,1}_{\theta}} \leq   & \frac12  |\bG - \bJ |_{H^{s-1,1}_{\theta}}.
	\end{split}
\end{equation*}
Therefore, $\text{M}$ is a contraction mapping in $X_{s,\theta}$. Using the contraction mapping principle, then there is a unique solution for the problem \eqref{p0}. To prove the continuous dependence, let $\bG_1$ and $\bG_2$ satisfy
\begin{equation*}\label{G1}
	\begin{cases}
		& \square G_1^{ia}=\frac{\partial }{\partial y^a} ( \frac{\partial p_1}{\partial x^i} ) ,
		\\
		& \Delta_x p_1=(\bF_1^{-1})^{kl}(\bF_1^{-1})^{mj} Q_0(G_1^{mk}, G_1^{jl}) ,
		\\
		& 	 (G_1^{ia}, \frac{\partial G_1^{ia}}{\partial t})|_{t=0}=(0, \frac{\partial v_{01}^{i}}{\partial y^a}),
	\end{cases}
\end{equation*}
and
\begin{equation*}\label{G2}
	\begin{cases}
		& \square G_2^{ia}=\frac{\partial }{\partial y^a} ( \frac{\partial p_2}{\partial x^i} ) ,
		\\
		& \Delta_x p_2=(\bF_2^{-1})^{kl}(\bF_2^{-1})^{mj} Q_0(G_2^{mk}, G_2^{jl}) ,
		\\
		& 	 (G_2^{ia}, \frac{\partial G_2^{ia}}{\partial t})|_{t=0}=(0, \frac{\partial v_{02}^{i}}{\partial y^a}),
	\end{cases}
\end{equation*}
where $\bF_1^{-1}$ and $\bF_2^{-1}$ satisfy
\begin{equation*}\label{G3}
	\begin{split}
		(\bF_1^{-1})^{ia} = & \frac{1}{(n-1)!} \epsilon_{i i_2 i_3 \cdots i_n }\epsilon^{a a_2 a_3 \cdots a_n } (\delta^{i_2 a_2}+ G_1^{i_2 a_2}) \cdots (\delta^{i_n a_n}+ G_1^{i_n a_n}),
		\\
		(\bF_2^{-1})^{ia} = & \frac{1}{(n-1)!} \epsilon_{i i_2 i_3 \cdots i_n }\epsilon^{a a_2 a_3 \cdots a_n } (\delta^{i_2 a_2}+ G_2^{i_2 a_2}) \cdots (\delta^{i_n a_n}+ G_2^{i_n a_n}).
	\end{split}
\end{equation*}
Using {Proposition \ref{nE}}, we can prove
\begin{equation}\label{G4}
	\begin{split}
		& |\bG_2-\bG_1|_{H^{s-1,1}_{\theta}} 
		\\
		\leq C & \|\bv_{02}-\bv_{01}\|_{H^{s}} 
		+CT^{\frac{\varepsilon}{2} } |\bG_2 - \bG_1|_{H^{s-1,1}_{\theta}} \left(1+ |\bG_1|^{n^2+3n+2}_{H^{s-1,1}_{\theta}}+ |\bG_2|^{n^2+3n+2}_{H^{s-1,1}_{\theta}}  \right).
	\end{split}
\end{equation}
By \eqref{G4}, for $\bG_1, \bG_2\in X_{s,\theta}$, if $T$ is sufficiently small, then we can get
\begin{equation}\label{G5}
	\begin{split}
		|\bG_2-\bG_1|_{H^{s-1,1}_{\theta}} \leq C   \|\bv_{02}-\bv_{01}\|_{H^{s}} .
	\end{split}
\end{equation}
Using Lemma \ref{QR1} and estimate \eqref{G5} yield
\begin{equation*}\label{G6}
	\begin{split}
		\|\bG_2-\bG_1\|_{L^\infty_{[0,T]}H^{s-1,1}}+\|\frac{\partial }{\partial t} (\bG_2-\bG_1 )\|_{L^\infty_{[0,T]}H^{s-1}} 
		\leq  C \|\bv_{02}-\bv_{01}\|_{H^{s}} .
	\end{split}
\end{equation*}
Therefore, we have proved the continuous dependence of solutions on the initial data.

It still remains for us to consider the velocity $\bv$ and magnetic field $\bb$. Recalling $|\bG|_{H^{s-1,1}_\theta}\lesssim \|\bv_0\|_{H^s}$ and $\theta=\frac34$, using Lemma \ref{QR1}, we have $\bG\in C([0,T],H^{s-1,1}) \cap C^1([0,T],H^{s-1,0})$. This implies
\begin{equation}\label{Gw1}
	\nabla_y \bb \in C([0,T],H^{s-1}), \quad \nabla_y \bv \in C([0,T],H^{s-1}),
\end{equation}
where $\nabla_y =(\frac{\partial}{\partial {y_1}},\frac{\partial}{\partial {y_2}},\cdots, \frac{\partial}{\partial {y_n}})^{\mathrm{T}}$. By \eqref{T07}, multiplying $\frac{\partial \bx}{\partial t}$ and integrating it on $[0,T]\times \mathbb{R}^n$, we have
\begin{equation}\label{Gw2}
	\begin{split}
		\|\frac{\partial \bx}{\partial t}\|^{{2}}_{L^2(\mathbb{R}^n_y)}+ \|\bb_0 \cdot \frac{\partial \bx}{\partial \by}\|^{{2}}_{L^2(\mathbb{R}^n_y)}+\int^t_0 \int_{\mathbb{R}^n} \partial_{x_i} q \cdot \frac{\partial \bx^i}{\partial t} dy =\|{\bv}_0\|^{{2}}_{L^2(\mathbb{R}^n)}+\|{\bb}_0\|^{{2}}_{L^2(\mathbb{R}^n)}.
	\end{split}	
\end{equation}
By using $\frac{\partial \bx^i}{\partial t}=v^i$ and changing of coordinates, we have
\begin{equation}\label{Gw3}
	\begin{split}
		\int^t_0 \int_{\mathbb{R}^n} \partial_{x_i} q \cdot \frac{\partial \bx^i}{\partial t} dy
		= & \int^t_0 \int_{\mathbb{R}^n} \partial_{x_i} \bar{q} \cdot \bar{v}^i \mathrm{det}(\frac{\partial \by}{\partial \bx})dx
		\\
		= & \int^t_0 \int_{\mathbb{R}^n} \partial_{x_i} \bar{q} \cdot \bar{v}^i dx
		\\
		=& -\int^t_0 \int_{\mathbb{R}^n}  \bar{q} \cdot \mathrm{div}  \bar{\bv} dx
		\\
		=& 0.
	\end{split}
\end{equation}
Substituting \eqref{Gw3} to \eqref{Gw2}, it yields
\begin{equation*}
	\|\frac{\partial \bx}{\partial t}\|^{{2}}_{L^2(\mathbb{R}^n_y)}+ \| \bb_0 \cdot \frac{\partial \bx}{\partial \by}\|^{{2}}_{L^2(\mathbb{R}^n_y)} =\|{\bv}_0\|^{{2}}_{L^2(\mathbb{R}^n)}+\|{\bb}_0\|^{{2}}_{L^2(\mathbb{R}^n)}.
\end{equation*}
The above tells us
\begin{equation}\label{Gw5}
	\|\bv\|^{{2}}_{L^2(\mathbb{R}^n_y)}+ \|\bb\|^{{2}}_{L^2(\mathbb{R}^n_y)} =\|{\bv}_0\|^{{2}}_{L^2(\mathbb{R}^n)}+\|{\bb}_0\|^{{2}}_{L^2(\mathbb{R}^n)}.
\end{equation}
Adding \eqref{Gw1} with \eqref{Gw5}, we have $(\bv, \bb) \in C([0,T],H^{s})$. Using \eqref{Gw1} and \eqref{Gw5} again, along with the fundamental theorem of calculus, we obtain
\begin{equation*}
	\bx(t,\by)-\by \in C([0,T],H^{s+1}), \quad \bH-\bE \in C([0,T],H^{s}).
\end{equation*}
Therefore, we have finished the proof of Theorem \ref{thm}.

\section{Appendix: The proof of Proposition \ref{nE}}\label{App}
In this section, we present a proof of Proposition \ref{nE}, inspired by Selberg's paper \cite{Selberg}. By examining equations \eqref{defu} and \eqref{none1}, we need to establish the corresponding estimates for $\chi(t)u_0$, $\chi(t)u_1$, and $\chi(t)u_2$, respectively.
\subsection{The estimates for $\chi(t)u_0$}
By \eqref{defu0}, we recall $\chi(t)u_0= \chi(t) \cos(tD)f +  \chi(t) D^{-1} \sin(tD)g$. Let us introduce the following lemma:
\begin{Lemma}\label{CQR}
	Let $\theta>\frac12$ and $s\in \mathbb{R}$. Let $\chi \in C^\infty_c(\mathbb{R})$ and $(f,g)\in H^{s-1,1}\times H^{s-1,0}$. Then the following estimates hold:
	\begin{align}
		\label{cr0}
		\| \chi(t)\mathrm{e}^{\pm \mathrm{i}tD}f \|_{H^{s-1,1}_{\theta}(\mathbb{R}^{1+n})} \lesssim & \| \chi\|_{H^\theta(\mathbb{R})}\|f \|_{H^{s-1,1}(\mathbb{R}^n)},
		\\ \label{cr1}
		\| \chi(t) \cos(tD) f \|_{H^{s-1,1}_{\theta}(\mathbb{R}^{1+n})} \lesssim & \| \chi\|_{H^\theta(\mathbb{R})}\|f \|_{H^{s-1,1}(\mathbb{R}^n)}.
	\end{align}
	If $r\in [-1,1]$, and $\mathrm{supp} \widehat{f} \subseteq \{\xi: |\xi_1|<m\}$ ($m>0$), then we also have
	\begin{equation}\label{cr5}
		\| \chi(t)\mathrm{e}^{ \mathrm{i}rtD} f \|_{H^{s-1,1}_{\theta}(\mathbb{R}^{1+n})} \lesssim (m^\theta\|\chi\|_{L^2(\mathbb{R})}+\|\chi\|_{H^\theta(\mathbb{R})} ) \|f\|_{H^{s-1,1}(\mathbb{R}^n)}.
	\end{equation}
	Moreover, we get 
	\begin{align}\label{cr2}
		\| \chi(t) D^{-1}\sin(tD) g \|_{H_{\theta}^{s-1,1} (\mathbb{R}^{1+n})} \lesssim & (\| \chi\|_{H^\theta(\mathbb{R})}+ \| t \chi\|_{H^\theta(\mathbb{R})}) \|g \|_{H^{s-1,0}(\mathbb{R}^n)}.
	\end{align}
\end{Lemma}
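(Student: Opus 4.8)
The plan is to prove all four estimates by passing to the space-time Fourier side, where the weight $\langle||\tau|-|\xi_1||\rangle^{\theta}$ in the $H^{s-1,1}_\theta$ norm can be transferred onto the time-Fourier transform of the cut-off $\chi$. The basic observation is the elementary pointwise inequality
\[
\langle||\tau|-|\xi_1||\rangle \le \langle\tau+|\xi_1|\rangle \quad\text{and}\quad \langle||\tau|-|\xi_1||\rangle \le \langle\tau-|\xi_1|\rangle,
\]
which follow from $||\tau|-|\xi_1|| \le |\tau\pm|\xi_1||$ by checking the sign of $\tau$ separately. This inequality is exactly adapted to the half-wave propagators $\mathrm{e}^{\pm\mathrm{i}tD}$, whose symbols concentrate the time frequency on the degenerate cone $\tau=\mp|\xi_1|$.

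For \eqref{cr0} I would first record that the space-time Fourier transform of $\chi(t)\mathrm{e}^{\mathrm{i}tD}f$ equals $\widehat{\chi}(\tau+|\xi_1|)\widehat{f}(\xi)$ (and $\widehat{\chi}(\tau-|\xi_1|)\widehat{f}(\xi)$ for the minus sign), using the Fourier conventions fixed in the Notations. Inserting this into the definition \eqref{HS1} of the norm, for each fixed $\xi$ the $\tau$-integral involves only $\langle||\tau|-|\xi_1||\rangle^{2\theta}|\widehat{\chi}(\tau+|\xi_1|)|^2$; after the substitution $\sigma=\tau+|\xi_1|$ and the pointwise inequality above it is bounded, uniformly in $\xi$, by $\int\langle\sigma\rangle^{2\theta}|\widehat{\chi}(\sigma)|^2\,d\sigma=\|\chi\|_{H^\theta}^2$. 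The remaining $\xi$-integral is exactly $\|f\|_{H^{s-1,1}(\mathbb{R}^n)}^2$, which yields \eqref{cr0}; then \eqref{cr1} follows immediately by writing $\cos(tD)=\tfrac12(\mathrm{e}^{\mathrm{i}tD}+\mathrm{e}^{-\mathrm{i}tD})$ and applying the triangle inequality. For \eqref{cr5} the same computation applies, but since $|r|<1$ the concentration set $\tau=-r|\xi_1|$ no longer lies on the cone, so $||\tau|-|\xi_1||$ need not be small. The frequency restriction $\mathrm{supp}\,\widehat{f}\subseteq\{|\xi_1|<m\}$ controls this defect: after $\sigma=\tau+r|\xi_1|$ one gets $||\tau|-|\xi_1||\le|\sigma|+2m$, hence $\langle||\tau|-|\xi_1||\rangle^{2\theta}\lesssim\langle\sigma\rangle^{2\theta}+m^{2\theta}$, and integrating against $|\widehat{\chi}(\sigma)|^2$ produces precisely the two terms $\|\chi\|_{H^\theta}^2$ and $m^{2\theta}\|\chi\|_{L^2}^2$.

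Finally, for \eqref{cr2} I would split $g$ in the $\xi_1$-frequency into a high part supported in $\{|\xi_1|\ge1\}$ and a low part supported in $\{|\xi_1|<2\}$. On the high part $D^{-1}$ is harmless: writing $D^{-1}\sin(tD)=\tfrac{1}{2\mathrm{i}}(\mathrm{e}^{\mathrm{i}tD}-\mathrm{e}^{-\mathrm{i}tD})D^{-1}$ and applying \eqref{cr0}, the bound $\langle\xi_1\rangle|\xi_1|^{-1}\lesssim1$ converts $\|D^{-1}g_{\mathrm{high}}\|_{H^{s-1,1}}$ into $\|g\|_{H^{s-1,0}}$, contributing $\|\chi\|_{H^\theta}\|g\|_{H^{s-1,0}}$. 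The genuine difficulty — and the reason the $\|t\chi\|_{H^\theta}$ term is forced — is the singularity of $D^{-1}$ at $\xi_1=0$ on the low part, where the naive half-wave decomposition is not available. Here I would use the identity
\[
\frac{\sin(t|\xi_1|)}{|\xi_1|}=t\int_0^1\cos(rt|\xi_1|)\,dr,
\]
which trades the singular factor $D^{-1}$ for a harmless factor $t$, so that
\[
\chi(t)D^{-1}\sin(tD)g_{\mathrm{low}}=\tfrac12\int_0^1 (t\chi)(t)\big(\mathrm{e}^{\mathrm{i}rtD}+\mathrm{e}^{-\mathrm{i}rtD}\big)g_{\mathrm{low}}\,dr.
\]
Applying \eqref{cr5} with $t\chi$ in place of $\chi$, with $m=2$ and $\pm r\in[-1,1]$, and then Minkowski's inequality in $r$, bounds the low part by $\|t\chi\|_{H^\theta}\|g_{\mathrm{low}}\|_{H^{s-1,1}}\lesssim\|t\chi\|_{H^\theta}\|g\|_{H^{s-1,0}}$, the last step using $\langle\xi_1\rangle\approx1$ on the support. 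Adding the two contributions gives \eqref{cr2}. I expect this low-frequency analysis to be the main obstacle, as it is exactly where the half-wave transference fails and the extra weight $\|t\chi\|_{H^\theta}$ must be introduced.
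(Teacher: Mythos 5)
Your proposal is correct and follows essentially the same route as the paper: the half-wave transference of the weight $\langle||\tau|-|\xi_1||\rangle^{\theta}$ onto $\widehat{\chi}$ for \eqref{cr0}--\eqref{cr1}, the triangle inequality $||\tau|-|\xi_1||\le|\tau-r|\xi_1||+Cm$ on the frequency-localized support for \eqref{cr5}, and the low/high $\xi_1$-frequency splitting for \eqref{cr2}, trading the singular $D^{-1}$ at low frequency for a factor of $t$ via an average of modulated exponentials (the paper writes $\sin(t|\xi_1|)/|\xi_1|=t\int_0^1\mathrm{e}^{\mathrm{i}(2\rho-1)t|\xi_1|}\,d\rho$, which is your cosine-average identity after a change of variables). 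The only differences are cosmetic normalizations, so no further comparison is needed.
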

\begin{proof}
	Since the space-time Fourier transform of $\chi(t)\mathrm{e}^{\pm \mathrm{i}tD}f$ is $\widehat{\chi}(\tau \mp |\xi_1|)\widehat{f}(\xi)$, we obtain
	\begin{equation}\label{cr4}
		\begin{split}
			& \| \widehat{\chi}(\tau \mp |\xi_1|)\widehat{f}(\xi) \|_{H^{s-1,1}_{\theta}(\mathbb{R}^{1+n})}
			\\
			=&  \left( \int_{\mathbb{R}^{1+n}} \left<\xi\right>^{2(s-1)} \left<\xi_1 \right>^{2} \left<|\tau|-|\xi_1| \right>^{2\theta} |\widehat{\chi}(\tau\mp|\xi_1|) \widehat{f}(\xi)|^2 d\tau d\xi\right)^{\frac12} 
			\\
			=&  \left( \int_{\mathbb{R}^{n}}\left<\xi\right>^{2(s-1)}\left<\xi_1 \right>^{2}  |\widehat{f}(\xi)|^2 d\xi \int_{\mathbb{R}}  \left<|\tau|-|\xi_1| \right>^{2\theta} |\widehat{\chi}(\tau\mp|\xi_1|) |^2 d\tau  \right)^{\frac12} 
			\\
			\lesssim & \|\chi\|_{H^\theta} \|f\|_{H^{s-1,1}}.
		\end{split}
	\end{equation}
	Thus, the estimate \eqref{cr0} is established. By using the identity $\cos(tD) f=\frac12(\mathrm{e}^{ \mathrm{i}tD}f + \mathrm{e}^{- \mathrm{i}tD}f )$ and \eqref{cr4}, we can directly obtain \eqref{cr1}. Note that the space-time Fourier transform of $\chi(t)\mathrm{e}^{ \mathrm{i}rtD} {f} $ equals 
	\begin{equation*}
		\widehat{\chi}(\tau-r |\xi_1|)\widehat{{f}}(\xi) .
	\end{equation*}
	When $\xi \in \mathrm{supp} \widehat{f}$ and $r\in[-1,1]$, it yields
	\begin{equation*}\label{cr6}
		| |\tau|-|\xi_1| | \leq |  \tau- r|\xi_1| | + (1-|r|)|\xi_1| \leq |  \tau- r|\xi_1| |+m.
	\end{equation*}
	Through calculations, we can deduce that
	\begin{equation*}\label{cr7}
		\begin{split}
			 \| \chi(t)\mathrm{e}^{ \mathrm{i}rtD} f \|^2_{H^{s-1,1}_{\theta}(\mathbb{R}^{1+n})} 
			=& \int_{\mathbb{R}^{1+n}} \left< |\tau|-|\xi_1| \right>^{2\theta} | \widehat{\chi}(\tau-r|\xi_1|) |^2 \left< \xi_1 \right>^2 \left<\xi\right>^{2(s-1)} | \widehat{f}(\xi) |^2 d\tau d\xi
			\\
			\lesssim & \int_{\mathbb{R}^{1+n}} \left< |\tau|-|\xi_1| \right>^{2\theta} | \widehat{\chi}(\tau-r|\xi_1|) |^2  \left< \xi_1 \right>^2 \left<\xi\right>^{2(s-1)} | \widehat{f}(\xi) |^2 d\tau d\xi
			\\
			\lesssim & \int_{\mathbb{R}^{1+n}} (\left< \tau -r|\xi_1| \right>^{2\theta}+m^{2\theta} ) | \widehat{\chi}(\tau-r|\xi_1|) |^2  \left< \xi_1 \right>^2 \left<\xi\right>^{2(s-1)} | \widehat{{f}}(\xi) |^2 d\tau d\xi
			\\
			\lesssim & (\|\chi\|^2_{H^\theta}+ m^{2\theta}\|\chi\|^2_{L^2} ) \|f\|^2_{H^{s-1,1}}.
		\end{split}
	\end{equation*}
Thus, we have proved \eqref{cr5}. It remains for us to prove \eqref{cr2}. Give a decomposition
	\begin{equation*}
		g=g_1+g_2, \quad \textrm{supp} \ \widehat{g}_1 \subseteq \{ \xi: |\xi_1|< 1\} , \ \textrm{and} \  \textrm{supp} \ \widehat{g}_2 \subseteq \{ \xi: |\xi_1|\geq 1\}.
	\end{equation*}
We rewrite $\chi(t)D^{-1} \sin(tD) g_1$ as
	\begin{equation*}
		\chi(t)D^{-1} \sin(tD) g_1=\int^1_0 t\chi(t) \mathrm{e}^{ \mathrm{i}(2r-1)tD} g_1 dr, \quad  \sup \widehat{g}_1 \subseteq \{ \xi: |\xi_1|< 1\}.
	\end{equation*}
Due to \eqref{cr5}, we can see
	\begin{equation}\label{cr8}
		\| \chi(t)D^{-1} \sin(tD) g_1 \|_{H^{s-1,1}_\theta} \lesssim \| t \chi\|_{H^\theta}\| g_1 \|_{H^{s-1,0}}.
	\end{equation}
Noting that $\widehat{g}_2 \subseteq \{ \xi: |\xi_1|\geq 1\}$, we therefore obtain
	\begin{equation}\label{cr9}
		\begin{split}
			\| \chi(t)D^{-1} \sin(tD) g_2 \|_{H^{s-1,1}_\theta} 
			=& 
			\| \chi(t) \sin(tD) (D^{-1} g_2) \|_{H^{s-1,1}_\theta}
			\\
			\lesssim & \| \chi\|_{H^\theta}\| D^{-1} g_2 \|_{H^{s-1,1}}
			\\
			\lesssim & \| \chi\|_{H^\theta}\| g_2 \|_{H^{s-1,0}}.
		\end{split}
	\end{equation}
Combining \eqref{cr8} with \eqref{cr9} shows that \eqref{cr2} holds. Therefore, the proof of Lemma \ref{CQR} is complete.
\end{proof}
\subsection{The estimates for $u_2$}
By \eqref{defu0}, we have $ u_2 = \widetilde{\square}^{-1} F_2 $, where $F_2= (1-\phi(\Lambda_-) )F$. We define
\begin{equation}\label{nset}
	\mathcal{N}=\left\{ (\tau,\xi)\in \mathbb{R}^{1+n}: \left||\tau|-|\xi_1| \right|<1 \right\}.
\end{equation}
\begin{Lemma}\label{mF}
	Assume $s\in \mathbb{R}$, $\theta \in (\frac12,1)$. Let $u_2$ and $F_2$ be stated in \eqref{defu0}. Then the following estimate holds:
	\begin{equation}\label{up00}
		|u_2|_{H^{s-1,1}_\theta} \leq C
		\| F_2\|_{H^{s-1,0}_{\theta-1} }.
	\end{equation}
\end{Lemma}
\begin{proof}
Note that $\textrm{supp} \widetilde{F}_2 \subseteq \mathbb{R}^{1+n}\backslash \mathcal{N}$, where $\mathcal{N}$ is defined in \eqref{nset}. From \eqref{defu0}, we obtain
	\begin{align*}
		\widetilde{u}_2(\tau,\xi)=& (|\tau|^2-|\xi_1|^2)^{-1} \widetilde{F}_2(\tau,\xi),
		\\
		\widetilde{\partial_t u_2}(\tau,\xi)=& \tau (|\tau|^2-|\xi_1|^2)^{-1} \widetilde{F}_2(\tau,\xi).
	\end{align*}
	Consequently, we can compute
	\begin{equation}\label{up03}
		\begin{split}
			\| {u}_2\|_{H^{s-1,1}_{\theta}}\lesssim  & \| \left<\xi\right>^{s-1} \left< \xi_1 \right> \left< |\tau|-|\xi_1| \right>^\theta (|\tau|-|\xi_1|)^{-1}	(|\tau|+|\xi_1|)^{-1} \widetilde{F}_2(\tau,\xi) \|_{L^2(\mathbb{R}^{1+n})}
			\\
			\lesssim & \| \left<\xi\right>^{s-1} \left< |\tau|-|\xi_1| \right>^{\theta-1} \widetilde{F}_2(\tau,\xi) \|_{L^2(\mathbb{R}^{1+n})}=\| F_2\|_{H^{s-1,0}_{\theta-1} }.
		\end{split}
	\end{equation}
	Similarly, we have
	\begin{equation}\label{up05}
		\begin{split}
			\| \partial_t {u}_2\|_{H^{s-1,0}_{\theta}} \lesssim  & \| \left<\xi\right>^{s-1}  \left< |\tau|-|\xi_1| \right>^\theta |\tau| (|\tau|-|\xi_1|)^{-1}	(|\tau|+|\xi_1|)^{-1} \widetilde{F}_2(\tau,\xi) \|_{L^2(\mathbb{R}^{1+n})}
			\\
			\lesssim & \| \left<\xi\right>^{s-1} \left< |\tau|-|\xi_1| \right>^{\theta-1} \widetilde{F}_2(\tau,\xi) \|_{L^2(\mathbb{R}^{1+n})}=\| F_2\|_{H^{s-1,0}_{\theta-1} }.
		\end{split}
	\end{equation}
	Due to estimates \eqref{up03} and \eqref{up05}, we obtain \eqref{up00}. Therefore, the proof of this lemma is complete.
\end{proof}
\subsection{The estimates for $\chi(t)u_1$}
By \eqref{defu0}, we recall
\begin{equation*}\label{lwa}
	u_1(t)={ \int^t_0 D^{-1}\sin( (t-t')D ) \cdot F_1(t') dt' }.
\end{equation*} 
where $F_1$ is defined as in \eqref{defu0}. To bound $u_1$, let us first introduce the following decomposition.
\begin{Lemma}\label{mE}
	Let $s\in \mathbb{R}$, $\theta \in (\frac12,1)$ and $c_0$ be a positive constant and $c_0\geq 2$. Suppose that
	\begin{equation}\label{lwh}
		2+||\tau|-|\xi_1|| \leq c_0, \ \textrm{for} \ (\tau,\xi)\in \mathrm{supp} \ \widetilde{F}_1.
	\end{equation}
	Then there exists $f_j^{\pm} \in H^{s-1,1}$, $g_j \in C([0,1],H^{s-1,0})$ for $j\geq 1$ such that
	\begin{equation}\label{U1a}
		\begin{split}
			& \mathrm{supp} \ \widehat{f_j^{\pm}} \subseteq \{\xi: |\xi_1| \geq c_0\}, \qquad \qquad \qquad \mathrm{supp} \ \widehat{g_j} \subseteq \{\xi: |\xi_1| < c_0\},
			\\
			& \| f_j^{\pm}\|_{H^{s-1,1}}\lesssim (3c_0)^{j-\frac12} \|F_1\|_{H_0^{s-1,0}}, \quad \sup_{\rho\in[0,1]}\|g_j(\rho)\|_{H^{s-1,0}}  \lesssim (3c_0)^{j-\frac12} \|F_1\|_{H_0^{s-1,0}},
		\end{split}
	\end{equation}
	and
	\begin{equation}\label{U1}
		\begin{split}
			u_1(t)=&\sum^{\infty}_{j=1} \frac{t^{j+1}}{j!} \int^1_0 \mathrm{e}^{\mathrm{i}t(2\rho-1)D} g_j(\rho)d\rho +\sum^{\infty}_{j=1} \frac{t^{j}}{j!} ( \mathrm{e}^{\mathrm{i}tD}f^+_j + \mathrm{e}^{-\mathrm{i}tD}f^-_j )+R_+(t)+R_-(t).
		\end{split}
	\end{equation}
	Above, $R_+(t)$ and $R_-(t)$ are given by
	\begin{equation}\label{Rf}
		\begin{split}
			{{\widehat{R}}}_{+}(t)=& -\frac{1}{4\pi |\xi_1|} \int^0_{-\infty} \frac{\mathrm{e}^{\mathrm{it\tau}}-\mathrm{e}^{\mathrm{it|\xi_1|}} }{|\tau|+|\xi_1|} \widetilde{F_{1,2}}(\tau,\xi)d\tau,
			\\
			{{\widehat{R}}}_-(t)=& -\frac{1}{4\pi |\xi_1|} \int_0^{\infty} \frac{\mathrm{e}^{\mathrm{it\tau}}-\mathrm{e}^{-\mathrm{it|\xi_1|}} }{|\tau|+|\xi_1|} \widetilde{F_{1,2}}(\tau,\xi)d\tau,
		\end{split}
	\end{equation}
	where $\textrm{supp} \ \widehat{F_{1,2}} \subseteq \{ \xi: |\xi_1|\geq c_0 \}$. Moreover, there exists $h^{\pm}_j \in H^{s-1,0}$ for $j\geq 1$ such that
	\begin{equation}\label{hj}
		\begin{split}
			\| h^{\pm}_j \|_{ H^{s-1,0}} \lesssim (3c_0)^{j-\frac12} \| F_1\|_{H_0^{s-1,0}},
		\end{split}
	\end{equation}
	and
	\begin{equation}\label{hj1}
		\begin{split}
			\partial_t	u_1(t)=&\sum^{\infty}_{j=1} \frac{t^{j}}{j!} ( \mathrm{e}^{\mathrm{i}tD}h^+_j + \mathrm{e}^{-\mathrm{i}tD}h^-_j )-\mathrm{i}DQ_+(t)+\mathrm{i}D Q_-(t),
		\end{split}
	\end{equation}
	and
	\begin{equation*}\label{hj2}
		\begin{split}
			{{\widehat{Q}}}_{+}(t)=  &-\frac{1}{4\pi |\xi_1|} \int^0_{-\infty} \frac{\mathrm{e}^{\mathrm{it\tau}}-\mathrm{e}^{\mathrm{it|\xi_1|}} }{|\tau|+|\xi_1|} \widetilde{F_{1}}(\tau,\xi)d\tau,
			\\
			{{\widehat{Q}}}_{-}(t)=  & -\frac{1}{4\pi |\xi_1|} \int_0^{\infty} \frac{\mathrm{e}^{\mathrm{it\tau}}-\mathrm{e}^{-\mathrm{it|\xi_1|}} }{|\tau|+|\xi_1|} \widetilde{F_{1}}(\tau,\xi)d\tau.
		\end{split}
	\end{equation*}
\end{Lemma}
\begin{proof}
We define
	\begin{equation}\label{b01}
		G_{\pm}(t)= \int^t_0 \mathrm{e}^{\pm \mathrm{i} (t-t')D} \cdot F_1(t') dt'.
	\end{equation}
Then, we can calculate the Fourier transform of $G$ by
	\begin{equation}\label{b02}
		\begin{split}
			\widehat{G}_{\pm}(t,\xi)= & \int^t_0 \mathrm{e}^{\pm \mathrm{i} (t-t')|\xi_1|} \cdot \widehat{F}_1(t') dt'.
			\\
			=& \int^t_0 \mathrm{e}^{\pm \mathrm{i} (t-t')|\xi_1|} \left(\frac{1}{2\pi} \int_{\mathbb{R}} \mathrm{e}^{\mathrm{i}t'\tau}\widetilde{F}_1(\tau,\xi)d\tau \right) dt'
			\\
			=& \frac{ \mathrm{e}^{\pm \mathrm{i} t|\xi_1|} }{2\pi}  \int_{\mathbb{R}} \left( \int^t_0 \mathrm{e}^{ \mathrm{i} t'(\tau \mp |\xi_1|)}dt' \right)  \widetilde{F}_1(\tau,\xi) d\tau
			\\
			=& \frac{ \mathrm{e}^{\pm \mathrm{i} t|\xi_1|} }{2\pi}  \int_{\mathbb{R}} \frac{\mathrm{e}^{ \mathrm{i} t(\tau \mp |\xi_1|)}-1}{\mathrm{i}(\tau \mp |\xi_1|)}   \widetilde{F}_1(\tau,\xi) d\tau .
		\end{split}	
	\end{equation}	
	By Taylor's expansion, we have:
	\begin{equation}\label{b03}
		\begin{split}
			\frac{\mathrm{e}^{ \mathrm{i} t(\tau \mp |\xi_1|)}-1}{\mathrm{i}(\tau \mp |\xi_1|)}= & \sum^{\infty}_{j=1} \frac{t^j}{j!}\mathrm{i}^{j-1} (\tau \mp |\xi_1|)^{j-1}.
		\end{split}	
	\end{equation}	
	Substituting \eqref{b03} into \eqref{b02}, we obtain
	\begin{equation}\label{b05}
		\begin{split}
			\widehat{G}_{\pm}(t,\xi)
			=& \frac{ \mathrm{e}^{\pm \mathrm{i} t|\xi_1|} }{2\pi}  \sum^{\infty}_{j=1} \frac{t^j}{j!}  \int_{\mathbb{R}} \mathrm{i}^{j-1} (\tau \mp |\xi_1|)^{j-1}  \widetilde{F}_1(\tau,\xi) d\tau .
		\end{split}	
	\end{equation}	
	Since there is an operator $D^{-1}$ in $u_1$, so we need to consider low and high frequencies separately. To do this, we decompose $F_1=F_{1,1}+F_{1,2}$, where 
	\begin{equation}\label{B04}
		\mathrm{supp}\widehat{F}_{1,1}\subseteq \{\xi: |\xi_1| < c_0 \}, \quad \mathrm{supp}\widehat{F}_{1,2}\subseteq \{\xi : |\xi_1| \geq c_0 \}.
	\end{equation}
	Let $u_{1,a}$ be defined as 
	\begin{equation*}
		u_{1,a}(t)={ \int^t_0 D^{-1}\sin( (t-t')D ) \cdot F_{1,a}(t') dt'}.
	\end{equation*}
	Then, we can rewrite $u_{1,a}$ as
	\begin{equation}\label{B05}
		u_{1,a}(t)={ \frac{1}{2\mathrm{i}}\int^t_0 D^{-1} \left\{  \mathrm{e}^{\mathrm{i}(t-t')D} F_{1,a}(t') - \mathrm{e}^{-\mathrm{i}(t-t')D} F_{1,a}(t') \right\} dt' }, \quad a=1,2.
	\end{equation}
Next, we divide the problem into several steps to bound \( u_{1,a} \) for \( a = 1, 2 \) and \(\partial_t u_1\).

	\textbf{Step 1: formulation for $u_{1,1}$}. Define
	\begin{equation*}
		\alpha(r)=\mathrm{e}^{\mathrm{i}tr}(\tau-r)^{j-1}.
	\end{equation*}
	Observing equations \eqref{b01} and \eqref{b05}, and using \eqref{B05}, we obtain
	\begin{equation}\label{b06}
		\begin{split}
			\widehat{u}_{1,1}(t,\xi)=& \frac{1}{4\pi} \sum^{\infty}_{j=1} \frac{t^j}{j!} 
			\int_{\mathbb{R} } \mathrm{i}^j |\xi_1|^{-1} \left( \mathrm{e}^{\mathrm{i}t|\xi_1|} (\tau-|\xi_1|)^{j-1}
			- \mathrm{e}^{-\mathrm{i}t|\xi_1|} (\tau+|\xi_1|)^{j-1} \right) \widetilde{F_{1,1}} (\tau,\xi) d\tau
			\\
			=& \frac{1}{4\pi} \sum^{\infty}_{j=1} \frac{t^j}{j!} \int_{\mathbb{R} } \mathrm{i}^j |\xi_1|^{-1} 
			\left(\alpha(|\xi_1|)-\alpha(-|\xi_1|)\right) \widetilde{F_{1,1}}(\tau,\xi) d\tau
			\\
			=& \frac{1}{2\pi} \sum^{\infty}_{j=1} \frac{t^j}{j!} \int_{\mathbb{R} } \int^1_0 \mathrm{i}^j  
			\alpha'((2\rho-1)|\xi_1|)  \widetilde{F_{1,1}} (\tau,\xi)d\rho d\tau
			\\
			=& \frac{1}{2\pi} \sum^{\infty}_{j=1} \frac{t^j}{j!} \int^1_0 \int_{\mathbb{R} }  \mathrm{i}^j 
			\alpha'((2\rho-1)|\xi_1|)  \widetilde{F_{1,1}} (\tau,\xi) d\tau d\rho.
		\end{split}
	\end{equation} 
	By direct calculation, we have
	\begin{equation}\label{b07}
		\alpha'(r)=\mathrm{i}t\mathrm{e}^{\mathrm{i}tr}(\tau-r)^{j-1}-(j-1)\mathrm{e}^{\mathrm{i}tr}(\tau-r)^{j-2}.
	\end{equation}
	Above, the second term exists only for $ j \geq 2$. Denote $k_j(\rho)$ such that
	\begin{equation}\label{b08}
		\widehat{k}_j(\rho,\xi)= \int_{\mathbb{R} }  (\tau-(2\rho-1)|\xi_1|)^{j-1} \widetilde{F_{1,1}} (\tau,\xi) d\tau.
	\end{equation}
	Since equations \eqref{B04} and \eqref{b08} hold, it is easy to see that
	\begin{equation}\label{b09}
		\mathrm{supp} \ \widehat{k}_j(\rho,\xi) \subseteq \{\xi : |\xi_1| < c_0\}.
	\end{equation}
	Inserting \eqref{b07} and \eqref{b08} into \eqref{b06} yields
	\begin{equation}\label{b10}
		\begin{split}
			{u}_{1,1}(t)=&  \frac{1}{2\pi} \sum^{\infty}_{j=1} \frac{t^j}{j!} \int^1_0  \mathrm{i}^{j+1} t \mathrm{e}^{\mathrm{i}t(2\rho-1)D} k_j (\rho) d\rho
			\\
			& - \frac{1}{2\pi} \sum^{\infty}_{j=1} \frac{t^j}{j!} \int^1_0  \mathrm{i}^{j} (j-1)  \mathrm{e}^{\mathrm{i}t(2\rho-1)D} k_{j-1} (\rho) d\rho
			\\
			=& \frac{1}{2\pi} \sum^{\infty}_{j=1} \frac{t^{j+1}}{j!} \int^1_0  \mathrm{i}^{j+1}  (1-\frac{j}{j+1})\mathrm{e}^{\mathrm{i}t(2\rho-1)D} k_j (\rho) d\rho \textcolor{red}{.}
		\end{split}
	\end{equation}
	Define
	\begin{equation}\label{b11}
		g_j(\rho)= (2\pi)^{-1}\mathrm{i}^{j+1}  (1-\frac{j}{j+1}) k_j(\rho).
	\end{equation}
	By \eqref{b09}, we have $\mathrm{supp} \ \widehat{g}_j(\rho,\xi) \subseteq \{\xi: |\xi_1| < c_0\}$.
	Using \eqref{b10} and \eqref{b11}, we can rewrite $u_{1,1}$ as
	\begin{equation}\label{b12}
		u_{1,1}(t)= \sum^{\infty}_{j=1} \frac{t^{j+1}}{j!} \int^1_0  \mathrm{e}^{\mathrm{i}t(2\rho-1)D} g_j (\rho) d\rho .
	\end{equation}
	For $\rho\in[0,1]$ and $\xi \in \mathrm{supp}\ \widehat{g}_j(\rho)$, we have
	\begin{equation*}\label{b13}
		|\tau-(2\rho-1)|\xi_1||\leq |\tau|+|\xi_1| \leq | |\tau|-|\xi_1| |+ 2|\xi_1| \leq | |\tau|-|\xi_1| |+2c_0 \leq 3c_0.
	\end{equation*}
	Combining \eqref{b08}, \eqref{b11}, \eqref{b13}, and applying H\"older's inequality, we obtain
	\begin{equation}\label{B13a}
		\begin{split}
			\|g_j(\rho)\|_{H^{s-1,0}} \lesssim & \left( \int_{|\tau|\leq 3c_0 }  |\tau|^{2(j-1)} d\tau \right)^{\frac12} 
			\left( \int_{\mathbb{R} }  \| \widetilde{F_{1,1}} (\tau,\xi)\|^2_{H^{s-1,0}} d\tau \right)^{\frac12} 
			\\
			\lesssim & (3c_0)^{j-\frac12} \| F_1 \|_{H^{s-1,0}_0}. 
		\end{split}
	\end{equation} 
	\textbf{Step 2: formulation for $u_{1,2}$}. By using equations \eqref{b01} and \eqref{b03}, we can obtain
	\begin{equation}\label{B16}
		\begin{split}
			\widehat{G}_{+}(t,\xi)=& 
			\frac{\mathrm{e}^{\mathrm{it|\xi_1|}}}{2\pi} \int^\infty_0  \frac{\mathrm{e}^{\mathrm{it(\tau-|\xi_1|)}}{-1}}{\mathrm{i}(\tau-|\xi_1|)} \widetilde{F}_{1}(\tau,\xi)d\tau
			+ \frac{\mathrm{e}^{\mathrm{it|\xi_1|}}}{2\pi} \int_{-\infty}^0  \frac{\mathrm{e}^{\mathrm{it(\tau-|\xi_1|)}}{-1}}{\mathrm{i}(\tau-|\xi_1|)} \widetilde{F}_{1}(\tau,\xi)d\tau
			\\
			=& 	\frac{\mathrm{e^{\mathrm{i}t|\xi_1|}}}{2\pi } \sum^\infty_{j=1} \frac{t^j}{j!} 
			\int^\infty_0 \mathrm{i}^{j-1} (|\tau|-|\xi_1|)^{j-1} \widetilde{F_{1}}(\tau,\xi)d\tau
			- \frac{1}{2\pi} \int^0_{-\infty} \frac{\mathrm{e}^{\mathrm{it\tau}}-\mathrm{e}^{\mathrm{it|\xi_1|}}}{\mathrm{i}(|\tau|+|\xi_1|)}\widetilde{F_{1}}(\tau,\xi)d\tau.
		\end{split}
	\end{equation}
	In a similar way, we can also obtain
	\begin{equation}\label{B17}
		\begin{split}
			\widehat{G}_{-}(t,\xi)
			=& 	\frac{\mathrm{e^{-\mathrm{i}t|\xi_1|}}}{2\pi } \sum^\infty_{j=1} \frac{t^j}{j!} 
			\int_{-\infty}^0 \mathrm{i}^{j-1} (\tau+|\xi_1|)^{j-1} \widetilde{F_{1}}(\tau,\xi)d\tau
			+ \frac{1}{2\pi} \int_0^{\infty} \frac{\mathrm{e}^{\mathrm{it\tau}}-\mathrm{e}^{-\mathrm{it|\xi_1|}}}{\mathrm{i}(\tau+|\xi_1|)}\widetilde{F_{1}}(\tau,\xi)d\tau
			\\
			=& 	\frac{\mathrm{e^{-\mathrm{i}t|\xi_1|}}}{2\pi } \sum^\infty_{j=1} \frac{t^j}{j!} 
			\int_{-\infty}^0 \mathrm{i}^{j-1} (|\xi_1|-|\tau|)^{j-1} \widetilde{F_{1}}(\tau,\xi)d\tau
			+ \frac{1}{2\pi} \int_0^{\infty} \frac{\mathrm{e}^{\mathrm{it\tau}}-\mathrm{e}^{-\mathrm{it|\xi_1|}}}{\mathrm{i}(|\tau|+|\xi_1|)}\widetilde{F_{1}}(\tau,\xi)d\tau.
		\end{split}
	\end{equation}
	Due to equations \eqref{B05}, \eqref{b01}, \eqref{B16}, and \eqref{B17}, we have
	\begin{equation*}
		\begin{split}
			\widehat{u}_{1,2}(t,\xi)=& 
			\frac{\mathrm{e^{\mathrm{i}t|\xi_1|}}}{4\pi |\xi_1|} \sum^\infty_{j=1} \frac{t^j}{j!} 
			\int^\infty_0 \mathrm{i}^{j} (|\tau|-|\xi_1|)^{j-1} \widetilde{F_{1,2}}(\tau,\xi)d\tau - \frac{1}{4\pi |\xi_1|} \int^0_{-\infty} \frac{\mathrm{e}^{\mathrm{it\tau}}-\mathrm{e}^{\mathrm{it|\xi_1|}}}{|\tau|+|\xi_1|} \widetilde{F_{1,2}}(\tau,\xi)d\tau
			\\
			& -\frac{\mathrm{e^{-\mathrm{i}t|\xi_1|}}}{4\pi |\xi_1|} \sum^\infty_{j=1} \frac{t^j}{j!} 
			\int_{-\infty}^0 \mathrm{i}^{j} (|\xi_1|-|\tau|)^{j-1} \widetilde{F_{1,2}}(\tau,\xi)d\tau
			- \frac{1}{4\pi |\xi_1|} \int_0^{\infty} \frac{\mathrm{e}^{\mathrm{it\tau}}-\mathrm{e}^{-\mathrm{it|\xi_1|}}}{|\tau|+|\xi_1|} \widetilde{F_{1,2}}(\tau,\xi)d\tau
		\end{split}
	\end{equation*}
	Therefore, we obtain
	\begin{equation}\label{U12}
		\begin{split}
			u_{1,2}(t)=&\sum^{\infty}_{j=1} \frac{t^{j}}{j!} ( \mathrm{e}^{\mathrm{i}tD}f^+_j + \mathrm{e}^{-\mathrm{i}tD}f^-_j )+R_+(t)+R_-(t),
		\end{split}
	\end{equation}
	where
	\begin{equation*}
		\begin{split}
			\widehat{f_j^+}(\xi)=& 
			( 4\pi |\xi_1| )^{-1}   
			\int^\infty_0 \mathrm{i}^{j} (|\tau|-|\xi_1|)^{j-1} \widetilde{F_{1,2}}(\tau,\xi)d\tau,
			\\
			\widehat{f_j^-}(\xi)= & -( 4\pi |\xi_1| )^{-1}  
			\int_{-\infty}^0 \mathrm{i}^{j} (|\xi_1|-|\tau|)^{j-1} \widetilde{F_{1,2}}(\tau,\xi)d\tau,
			\\
			\widehat{R_{+}}(t,\xi)=& - \frac{1}{4\pi |\xi_1|} \int^0_{-\infty} \frac{\mathrm{e}^{\mathrm{it\tau}}-\mathrm{e}^{\mathrm{it|\xi_1|}}}{|\tau|+|\xi_1|} \widetilde{F_{1,2}}(\tau,\xi)d\tau,
			\\
			\widehat{R_{-}}(t,\xi)=&- \frac{1}{4\pi |\xi_1|} \int_0^{\infty} \frac{\mathrm{e}^{\mathrm{it\tau}}-\mathrm{e}^{-\mathrm{it|\xi_1|}}}{|\tau|+|\xi_1|} \widetilde{F_{1,2}}(\tau,\xi)d\tau.
		\end{split}
	\end{equation*}
	Using \eqref{lwh}, \eqref{B04}, and H\"older's inequality, for $j\geq 1$, we can derive
	\begin{equation}\label{B14}
		\begin{split}
			\|{f_j^{\pm}}\|_{H^{s-1,1}} \lesssim c_0^{j-\frac12} \|{F_{1,2}}\|_{H^{s-1,0}_0} \lesssim (3c_0)^{j-\frac12} \|{F_{1,2}}\|_{H^{s-1,0}_0}.
		\end{split}
	\end{equation}
	By \eqref{b12}, \eqref{U12}, \eqref{B13a}, and \eqref{B14}, we obtain \eqref{U1} and \eqref{U1a}.

	\textbf{Step 3: formulation for $\partial_t u_1$}. We begin by calculating
	\begin{equation*}
		\begin{split}
			\partial_t u_1=& \int^t_0 \cos\left((t-t')D\right)F_1(t')dt'
			\\
			=& \frac12 \int^t_0 \left( \mathrm{e}^{\mathrm{i}(t-t')D}F_1(t') + \mathrm{e}^{-\mathrm{i}(t-t')D}F_1(t') \right) dt'.
		\end{split}
	\end{equation*}
	By \eqref{b02}, we have
	\begin{equation*}
		\begin{split}
			\widehat{\partial_t u_1}(t,\xi)
			=& \frac{\mathrm{e}^{\mathrm{i}t|\xi_1|}}{4\pi} \int_{\mathbb{R}} \frac{\mathrm{e}^{\mathrm{i}t(\tau-|\xi_1|)}-1}{\mathrm{i}(\tau-|\xi_1|)} \widetilde{F}_1(\tau,\xi)d\tau
			+\frac{\mathrm{e}^{-\mathrm{i}t|\xi_1|}}{4\pi} \int_{\mathbb{R}} \frac{\mathrm{e}^{\mathrm{i}t(\tau+|\xi_1|)}-1}{\mathrm{i}(\tau+|\xi_1|)} \widetilde{F}_1(\tau,\xi)d\tau.
		\end{split}
	\end{equation*}
	By \eqref{b05}, we get
	\begin{equation*}
		\begin{split}
			\widehat{\partial_t u_1}(t,\xi)
			=&  \frac{1}{4\pi} \int_{0}^{\infty} \mathrm{i}^{j}(\tau-|\xi_1|)^{j-1} \widetilde{F}_1(\tau,\xi)d\tau 
			-\frac{1}{4\pi}
			\int_{-\infty}^0 \mathrm{i}^{j} (|\xi_1|-|\tau|)^{j-1} \widetilde{F}_1 (\tau,\xi)d\tau,
			\\
			& -\frac{\mathrm{i}}{4\pi } \int_{0}^{\infty} \frac{\mathrm{e}^{\mathrm{i}t\tau}-\mathrm{e}^{-\mathrm{i}t|\xi_1|}}{|\tau|+|\xi_1|} \widetilde{F}_1(\tau,\xi)d\tau
			-\frac{\mathrm{i}}{4\pi } \int_{-\infty}^{0} \frac{\mathrm{e}^{\mathrm{i}t\tau}-\mathrm{e}^{\mathrm{i}t|\xi_1|}}{|\tau|+|\xi_1|} \widetilde{F}_1(\tau,\xi)d\tau.
		\end{split}
	\end{equation*}
	Hence, we can conclude that
	\begin{equation*}
		\begin{split}
			\partial_t	u_1(t)=&\sum^{\infty}_{j=1} \frac{t^{j}}{j!} ( \mathrm{e}^{\mathrm{i}tD}h^+_j + \mathrm{e}^{-\mathrm{i}tD}h^-_j )-\mathrm{i}DQ_+(t)+\mathrm{i}D Q_-(t),
		\end{split}
	\end{equation*}
	where $Q_\pm$ is defined in Lemma \ref{hj2}, and
	\begin{equation*}
		\begin{split}
			\widehat{h_j^+}(\xi)=& 
			\frac{\mathrm{i}}{4\pi }  
			\int^\infty_0 \mathrm{i}^{j} (|\tau|-|\xi_1|)^{j-1} \widetilde{F_{1}}(\tau,\xi)d\tau,
			\\
			\widehat{h_j^-}(\xi)= & -\frac{\mathrm{i}}{4\pi }  
			\int_{-\infty}^0 \mathrm{i}^{j} (|\xi_1|-|\tau|)^{j-1} \widetilde{F_{1}}(\tau,\xi)d\tau .
		\end{split}
	\end{equation*}
	By \eqref{lwh} and H\"older's inequality, we can deduce that
	\begin{equation*}
		\begin{split}
			\|{h_j^{\pm}}\|_{H^{s-1,0}} \lesssim &  (3c_0)^{j-\frac12} \|F_{1} \|_{H^{s-1,0}_0}.
		\end{split}
	\end{equation*}
	At this stage, we have completed the proof of Lemma \ref{mE}.
\end{proof}
Next, we provide an energy estimate for $\chi(t) u_1$.
\begin{Lemma}\label{oE}
	Assume $s\in \mathbb{R}$ and $\theta \in (\frac12,1)$. Let $u_1$ and $F_1$ be defined as in \eqref{defu0}, and suppose \eqref{lwh} holds. Then, the following estimate holds:
	\begin{equation}\label{op0}
		|\chi u_1|_{H^{s-1,1}_\theta} \leq C_0
		\| F_1\|_{H^{s-1,0}_{0} },
	\end{equation}
	where $C_0$ is given by
	\begin{equation*}
		\begin{split}
			C_0\approx \ & (3c_0)^{\frac12} \left( \|\chi\|_{\dot{H}^{\theta-1}}+\|t\chi\|_{H^{\theta}}+\|t\chi'\|_{\dot{H}^{\theta-1}}+\|t^2\chi'\|_{H^{\theta}}  \right)
			\\
			& +\sum^\infty_{j=1} \big( \frac{(3c_0)^{j+\frac12}}{j!} \| t^{j+1}\chi \|_{H^\theta}+ \frac{(3c_0)^{j+\frac12+\theta}}{j!} \| t^{j+1}\chi \|_{L^2} 
			\\
			& \quad +\frac{(3c_0)^{j-\frac12}}{j!} \| t^{j+1}\chi' \|_{H^\theta}
			+ \frac{(3c_0)^{j-\frac12+\theta}}{j!} \| t^{j+1}\chi \|_{L^2} 
			+ \frac{(3c_0)^{j-\frac12}}{j!} \| t^{j}\chi \|_{H^\theta} \big).
		\end{split}
	\end{equation*}
\end{Lemma}
\begin{proof}
	To estimate $|\chi u_1|_{H^{s-1,1}_\theta}$, we need to consider $\|\chi u_1\|_{H^{s-1,1}_\theta}, \|\chi' u_1\|_{H^{s-1,0}_\theta}$ and $\|\chi \partial_t u_1\|_{H^{s-1,0}_\theta}$. 
	We divide the proof into several steps.

	\textbf{Step 1: Estimate for $\|\chi u_{1,1} \|_{H^{s-1,1}_\theta}$ and $\|\chi' u_{1,1} \|_{H^{s-1,0}_\theta}$}. Using \eqref{b12}, we have
	\begin{equation}\label{BU}
		u_{1,1}(t)=\sum^{\infty}_{j=1} \frac{t^{j+1}}{j!} \int^1_0 \mathrm{e}^{\mathrm{i}t(2\rho-1)D}g_j(\rho)d\rho,
	\end{equation}
	where $g_j(\rho)$ is defined in \eqref{b11}. Due to \eqref{cr5} and \eqref{BU}, we get
	\begin{equation}\label{Opae}
		\begin{split}
			\| \chi u_{1,1} \|_{H^{s-1,1}_{\theta}} \lesssim & \big(\sum^{\infty}_{j=1} \frac{c_0^\theta \|t^{j+1} \chi \|_{L^2}+ \|t^{j+1} \chi \|_{H^\theta}}{j!} \big) \sup_{0\leq\rho\leq 1} \| g_j(\rho) \|_{H^{s-1,1}}
			\\
			\lesssim & \big(\sum^{\infty}_{j=1} \frac{c_0^\theta \|t^{j+1} \chi \|_{L^2}+ \|t^{j+1} \chi \|_{H^\theta}}{j!} \big) \cdot c_0 \cdot \sup_{0\leq\rho\leq 1} \| g_j(\rho) \|_{H^{s-1,0}}
			\\
			\leq & \big( \sum^\infty_{j=1} \frac{ (3c_0)^{j+\frac12} \|t^{j+1} \chi\|_{H^\theta}}{j!} +  \sum^\infty_{j=1} \frac{(3c_0)^{j+\frac12+\theta} \|t^{j+1} \chi\|_{L^2}}{j!} \big) \| F_1 \|_{H_0^{s-1,0}}
			\\
			\leq & C_0 \| F_1 \|_{H_0^{s-1,0}}.
		\end{split}
	\end{equation}
	Similarly, if we replace $\chi$ to $\chi'$, so we can also obtain
	\begin{equation}\label{op22}
		\begin{split}
			\| \chi' u_{1,1} \|_{H^{s-1,0}_{\theta}}
			\lesssim & \big(\sum^{\infty}_{j=1} \frac{c_0^\theta \|t^{j+1} \chi' \|_{L^2}+ \|t^{j+1} \chi' \|_{H^\theta}}{j!} \big) \sup_{0\leq\rho\leq 1} \| g_j(\rho) \|_{H^{s-1,0}}
			\\
			\leq & \big( \sum^\infty_{j=1} \frac{ (3c_0)^{j-\frac12} \|t^{j+1} \chi'\|_{H^\theta}}{j!} +  \sum^\infty_{j=1} \frac{(3c_0)^{j-\frac12+\theta} \|t^{j+1} \chi'\|_{L^2}}{j!} \big) \| F_1 \|_{H_0^{s-1,0}}
			\\
			\leq & C_0 \| F_1 \|_{H_0^{s-1,0}}.
		\end{split}
	\end{equation}
	\qquad \textbf{Step 2: Estimate for $\|\chi u_{1,2} \|_{H^{s-1,1}_\theta}$}. By \eqref{U12}, it yields
	\begin{equation*}
		u_{1,2}(t)=\sum^{\infty}_{j=1} \frac{t^j}{j!} \left( \mathrm{e}^{\mathrm{i}tD} f^+_j + \mathrm{e}^{-\mathrm{i}tD} f^-_j  \right)
		+R_{+}(t)+R_{-}(t).
	\end{equation*}
	Due to \eqref{B14} and \eqref{cr0}, we therefore derive
	\begin{equation}\label{op24}
		\begin{split}
			\left\| \chi(t) \sum^{\infty}_{j=1} \frac{t^j}{j!} \left( \mathrm{e}^{\mathrm{i}tD} f^+_j + \mathrm{e}^{-\mathrm{i}tD} f^-_j  \right) \right\|_{H^{s-1,1}_\theta} \leq \left(  \sum^{\infty}_{j=1} \frac{(3c_0)^{j-\frac12}\|t^j \chi \|_{H^\theta}}{j!} \right) \| F_1 \|_{H_0^{s-1,0}}.
		\end{split}
	\end{equation}
	Due to \eqref{Rf}, we can compute the estimate
	\begin{equation*}\label{op1}
		\begin{split}
			\widetilde{\chi R_+}(\tau,\xi)=-\frac{1}{4\pi {|\xi_1|}} \int^0_{-\infty} \frac{\widehat{\chi}(\tau-\lambda)-\widehat{\chi}(\tau-|\xi_1|)}{|\lambda|+|\xi_1|} \widetilde{F_{1,2}}({\lambda},\xi)d\lambda.
		\end{split}
	\end{equation*}
	By using Minkowski's inequality, we can bound $\|{\chi R_+}\|_{H^{s-1,1}_\theta}$ by 
	\begin{equation}\label{op3}
		\begin{split}
			\|{\chi R_+}\|_{H^{s-1,1}_\theta} \lesssim \int_{-\infty}^0  \left\| A(\lambda,\xi)(1+|\xi|)^{s-1} \widetilde{F_{1,2}}(\lambda,\xi) \right\|_{L^2_\xi}   d\lambda,
		\end{split}
	\end{equation}
	where
	\begin{equation*}
		\begin{split}
			A(\lambda,\xi)=\left\| (1+ | |\tau|-|\xi_1|| )^{\theta} \frac{\widehat{\chi}(\tau-\lambda)-\widehat{\chi}(\tau-|\xi_1|)}{|\lambda|+|\xi_1|} \right\|_{L^2_\tau}.
		\end{split}
	\end{equation*}
	To bound \eqref{op3}, we next provide an estimate for $A(\lambda, \xi)$. Define the set $\mathrm{U} = \left\{ \tau \in \mathbb{R}| |\tau-|\xi_1|| < 2( |\lambda| + |\xi_1| ) \right\}$. We then decompose $A(\lambda,\xi)$ as follows:
	\begin{equation}\label{op4}
		\begin{split}
			A(\lambda,\xi) \lesssim & \underbrace{ \left\|  \frac{\widehat{\chi}(\tau-\lambda)-\widehat{\chi}(\tau-|\xi_1|)}{|\lambda|+|\xi_1|} \right\|_{L^2_\tau} }_{\equiv A_1}
			+ \underbrace{ \left\|  | |\tau|-|\xi_1||^{\theta} \frac{\widehat{\chi}(\tau-\lambda)-\widehat{\chi}(\tau-|\xi_1|)}{|\lambda|+|\xi_1|} \right\|_{L^2_\tau(\mathrm{U})} }_{\equiv A_2}
			\\
			&	+ \underbrace{ \left\|  | |\tau|-|\xi_1||^{\theta} \frac{\widehat{\chi}(\tau-\lambda)-\widehat{\chi}(\tau-|\xi_1|)}{|\lambda|+|\xi_1|} \right\|_{L^2_\tau(\mathbb{R}\setminus\mathrm{U})} }_{\equiv A_3}.
		\end{split}
	\end{equation}
	For $\lambda<0$, then it follows that
	\begin{equation}\label{op5}
		\frac{\widehat{\chi}(\tau-\lambda)-\widehat{\chi}(\tau-|\xi_1|)}{|\lambda|+|\xi_1|}
		=\int^1_0 {\widehat{\chi}}'(\tau-|\xi_1|+\rho( |\lambda|+|\xi_1| )) d\rho.
	\end{equation}
	By Minkowski's inequality, it follows that
	\begin{equation}\label{op6}
		A_1 \lesssim \int^1_0 \| t\chi \|_{L^2(\mathbb{R})} d\rho = \|t\chi \|_{L^2(\mathbb{R})}.
	\end{equation}
	Due to
	\begin{equation*}
		||\tau|-|\xi_1|| \leq |\tau-|\xi_1|| \leq 2 | \tau-|\xi_1|+\rho( |\lambda|+|\xi_1| ) |, \quad \tau \in \mathbb{R}\backslash \mathrm{U},
	\end{equation*} 
	using \eqref{op5} again, we obtain
	\begin{equation}\label{op7}
		A_3 \lesssim \| t\chi \|_{H^\theta(\mathbb{R})} .
	\end{equation}
	On the other hand, for $\tau \in \mathrm{U}$, it yields
	\begin{equation*}\label{op70}
		|\tau-|\xi_1||\lesssim |\lambda|+|\xi_1|,  \quad	|\tau-\lambda|\lesssim |\lambda| + |\xi_1|.
	\end{equation*}
	Noting that $\theta \in (\frac12,1)$, and using \eqref{op70}, it follows
	\begin{equation}\label{op8}
		\begin{split}
			A_2  \lesssim 
			& \|  |\tau-|\xi_1||^{\theta}(|\lambda| + |\xi_1|)^{-1}  \widehat{\chi}(\tau-\lambda) \|_{L_\tau^2(\mathrm{U})}
			+\|  |\tau-|\xi_1||^{\theta}(|\lambda| + |\xi_1|)^{-1} \widehat{\chi}(\tau-|\xi_1|) \|_{L_\tau^2(\mathrm{U})}
			\\
			\lesssim & \|  (|\lambda|+|\xi_1|)^{\theta-1}  \widehat{\chi}(\tau-\lambda) \|_{L_\tau^2(\mathrm{U})}
			+ \|(\tau-|\xi_1|)^{\theta-1} \widehat{\chi}(\tau-|\xi_1|) \|_{L_\tau^2(\mathrm{U})}
			\\
			\lesssim & \|   |\tau-\lambda|^{\theta-1}  \widehat{\chi}(\tau-\lambda) \|_{L_\tau^2(\mathrm{U})}
			+ \|(\tau-|\xi_1|)^{\theta-1} \widehat{\chi}(\tau-|\xi_1|) \|_{L_\tau^2(\mathrm{U})}
			\\
			\lesssim & 2 \|\chi \|_{\dot{H}^{\theta-1}}.
		\end{split}
	\end{equation}
	Substituting \eqref{op6}, \eqref{op7}, and \eqref{op8} into \eqref{op4}, we obtain
	\begin{equation}\label{op10}
		A(\lambda,\xi) \lesssim  \| t\chi \|_{H^\theta(\mathbb{R})}+\|\chi \|_{\dot{H}^{\theta-1}}, \quad (\lambda,\xi)\in \mathrm{supp} \ \widehat{F_{1,2}}.
	\end{equation}
	Due to \eqref{op3}, \eqref{op10}, \eqref{lwh}, and by applying H\"older's inequality, we have
	\begin{equation}\label{op27}
		\begin{split}
			\| \chi R_+ \|_{H^{s-1,1}_\theta} \lesssim & {c_0^{\frac12}}(\| t\chi \|_{H^\theta(\mathbb{R})}+\|\chi \|_{\dot{H}^{\theta-1}}) 
			\left( \int_{-\infty}^0   \left\| (1+|\xi|)^{s-1} \widetilde{F_{1,2}}(\lambda,\xi) \right\|^2_{L^2_\xi} d\lambda \right)^{\frac12}
			\\
			\lesssim & c_0^{\frac12} (\| t\chi \|_{H^\theta(\mathbb{R})}+\|\chi \|_{\dot{H}^{\theta-1}})  \| F_1 \|_{H_0^{s-1,0}}.
		\end{split}
	\end{equation}
	Similarly, we can also derive
	\begin{equation}\label{op28}
		\begin{split}
			\| \chi R_- \|_{H^{s-1,1}_\theta} \lesssim & c_0^{\frac12} (\| t\chi \|_{H^\theta(\mathbb{R})}+\|\chi \|_{\dot{H}^{\theta-1}}) 
			\left( \int^{\infty}_0   \left\| (1+|\xi|)^{s-1} \widetilde{F_{1,2}}(\lambda,\xi) \right\|^2_{L^2_\xi} d\lambda \right)^{\frac12}
			\\
			\lesssim & c_0^{\frac12} (\| t\chi \|_{H^\theta(\mathbb{R})}+\|\chi \|_{\dot{H}^{\theta-1}})  \| F_1 \|_{H_0^{s-1,0}}.
		\end{split}
	\end{equation}
	Combining \eqref{op24}, \eqref{op27}, and \eqref{op28}, we have verified
	\begin{equation}\label{Op2}
		\begin{split}
			\| \chi u_{1,2}\|_{H^{s-1,1}_\theta} 
			\lesssim & \left(  \sum^{\infty}_{j=1} \frac{c_0^{j-\frac12}\|t^j \chi \|_{H^\theta}}{j!}   + c_0^{\frac12} (\| t\chi \|_{H^\theta(\mathbb{R})}+\|\chi \|_{\dot{H}^{\theta-1}})  \right) \| F_1 \|_{H_0^{s-1,0}}
			\\
			\leq & C_0  \| F_1 \|_{H_0^{s-1,0}}.
		\end{split}
	\end{equation}
	\textbf{Step 3: Estimate for $\|\chi' u_{1,2} \|_{H^{s-1,0}_\theta}$}. Using \eqref{B05} and \eqref{b02}, we can rewrite the expression as follows:
	\begin{equation*}\label{op13}
		\widehat{u_{1,2}}(t)=\frac{1}{4\pi |\xi_1|} \int_{\mathbb{R}} \left\{ \frac{\mathrm{e}^{\mathrm{i}t\tau}-\mathrm{e}^{\mathrm{i}t|\xi_1|}}{\tau-|\xi_1|} - \frac{\mathrm{e}^{\mathrm{i}t\tau}-\mathrm{e}^{-\mathrm{i}t|\xi_1|}}{\tau+|\xi_1|}  \right\}\widetilde{F_{1,2}}(\tau,\xi)d\tau.
	\end{equation*}
Therefore, it follows that
	\begin{equation}\label{op30}
		\begin{split}
			\widetilde{\chi u_{1,2}}=&\frac{1}{4\pi |\xi_1|} \int_{\mathbb{R}} \left\{ \frac{\widehat{\chi}(\tau-\lambda)-\widehat{\chi}(\tau-|\xi_1|)}{\lambda-|\xi_1|} - \frac{\widehat{\chi}(\tau-\lambda)-\widehat{\chi}(\tau+|\xi_1|)}{\lambda+|\xi_1|}  \right\}\widetilde{F_{1,2}}(\lambda,\xi)d\lambda
			\\
			=& \frac{1}{4\pi |\xi_1|} \int_{\mathbb{R}} \int^1_0 \left\{ \widehat{\chi}'(\tau-a)-\widehat{\chi}'(\tau-b) \right\} \widetilde{F_{1,2}}(\lambda,\xi) d\rho d\lambda.
		\end{split}
	\end{equation}
	where $a=|\xi_1|+\rho(\lambda-|\xi_1|)$ and $b=-|\xi_1|+\rho(\lambda+|\xi_1|)$. To bound $\chi u_{1,2}$, we divide it into two cases.

	\textit{Case 1: $|\tau|< 8|\xi_1|$.} Observe that, for $(\lambda,\xi)\in \mathrm{supp} \widetilde{F_{1,2}}$, $||\tau|-|\xi_1||,\ |\tau-a|,\ |\tau-b| \lesssim |\xi_1|$. Since $\theta\in(\frac12,1)$, it's clear for us to get
	\begin{equation}\label{op15}
		\begin{split}
			& (1+|\xi|)^{s-1} (1+ ||\tau|-|\xi_1||)^{\theta} | \widetilde{\chi u_{1,2}} | 
			\\
			\lesssim & \int_\mathbb{R} \int^1_0 \left( \frac{|\widetilde{\chi}'(\tau-a)|}{|\tau-a|^{1-\theta}} 
			+ \frac{|\widehat{\chi}'(\tau-b)|}{|\tau-b|^{1-\theta}}\right)
			(1+|\xi|)^{s-1} |\widetilde{F_{1,2}}(\lambda,\xi)| d\rho d\lambda .
		\end{split}
	\end{equation}

	\textit{Case 2: $|\tau|\geq 8|\xi_1|$.} In this case, we can rewrite \eqref{op30} by
	\begin{equation*}
		\begin{split}
			\widetilde{\chi u_{1,2}}(\tau,\xi)=&\frac{1}{2\pi } \int_{\mathbb{R}} \int^1_0 \int^1_0 \widehat{\chi}''(\tau-b+\sigma(b-a))(1-\rho) \widetilde{F_{1,2}}(\lambda,\xi)d\sigma d\rho d\lambda.
		\end{split}
	\end{equation*}
	Note that
	\begin{equation*}
		| \tau-b+\sigma(b-a) | \geq |\tau|-|b|-|b-a|\geq |\tau|-6|\xi_1| \gtrsim ||\tau|-|\xi_1||.
	\end{equation*}
	Therefore, we establish
	\begin{equation}\label{op17}
		\begin{split}
			& (1+|\xi|)^{s-1} (1+ ||\tau|-|\xi_1||)^{\theta} | \widetilde{\chi u_{1,2}} | 
			\\
			\lesssim & \int_\mathbb{R} \int^1_0 \int^1_0 (1+|\tau-b+\sigma(b-a)|^\theta)  
			|\widehat{\chi}''(\tau-b+\sigma(b-a))| (1+|\xi|)^{s-1} |\widetilde{F_{1,2}}(\lambda,\xi) | d\sigma d\rho d\lambda .
		\end{split}
	\end{equation}
	By applying \eqref{op15}, \eqref{op17}, and Minkowski's inequality, we can conclude that
	\begin{equation}\label{OPar}
		\begin{split}
			\| \chi u_{1,2} \|_{H^{s-1,0}_\theta} \lesssim & c_0^{\frac12} ( \|t\chi\|_{\dot{H}^{\theta-1}} + \|t^2 \chi \|_{H^\theta}) \|F_1\|_{H_0^{s-1,0}}.
		\end{split}
	\end{equation}
	As a result, replacing $\chi$ by $\chi'$ in \eqref{OPar}, we get
	\begin{equation}\label{OP00}
		\begin{split}
			\| \chi' u_{1,2} \|_{H^{s-1,0}_\theta} \lesssim & c_0^{\frac12} ( \|t\chi'\|_{\dot{H}^{\theta-1}} + \|t^2 \chi' \|_{H^\theta}) \|F_1\|_{H_0^{s-1,0}}.
		\end{split}
	\end{equation}
	Using \eqref{op22} and \eqref{OP00}, we have
	\begin{equation}\label{Op3}
		\begin{split}
			\| \chi' u_{1} \|_{H^{s-1,0}_\theta} 
			\leq & c_0^{\frac12} ( \|t\chi'\|_{\dot{H}^{\theta-1}} + \|t^2 \chi' \|_{H^\theta}) \|F_1\|_{H_0^{s-1,0}}
			\\
			& \quad + \big( \sum^\infty_{j=1} \frac{c_0^{j-\frac12} \|t^{j+1} \chi'\|_{H^\theta}}{j!} +  \sum^\infty_{j=1} \frac{c_0^{j-\frac12+\theta} \|t^{j+1} \chi'\|_{L^2}}{j!} \big) \|F_1\|_{H_0^{s-1,0}}
			\\
			\leq  & C_0 \|F_1\|_{H_0^{s-1,0}}.
		\end{split}
	\end{equation}
	
	\textbf{Step 4: Estimate for $\|\chi \partial_t u_{1} \|_{H^{s-1,0}_\theta}$}. By \eqref{hj}, \eqref{hj1}, \eqref{op27}, and \eqref{op28}, a straightforward modification of the argument used to estimate $\|\chi u_{1,2} \|_{H^{s-1,1}_\theta}$ shows that
	\begin{equation}\label{Op4}
		\begin{split}
			\| \chi \partial_t u_{1} \|_{H^{s-1,0}_\theta} \lesssim & \big(  \sum^{\infty}_{j=1} \frac{(3c_0)^{j-\frac12}\|t^j \chi \|_{H^\theta}}{j!} \big) \| F_1 \|_{H_0^{s-1,0}}
			+ c_0^{\frac12} ( \|\chi\|_{\dot{H}^{\theta-1}} + \|t \chi \|_{H^\theta}) \|F_1\|_{H_0^{s-1,0}}
			\\
			\leq & C_0  \|F_1\|_{H_0^{s-1,0}}.
		\end{split}
	\end{equation}
Summarizing estimates \textcolor{red}{\eqref{Opae}}, \eqref{op22}, \eqref{Op2}, \eqref{Op3}, and \eqref{Op4}, we obtain \eqref{op0}. Thus, the proof of Lemma \ref{oE} is complete.
\end{proof}
Based on the results above, we are now ready to provide a proof of Proposition \ref{nE}.
\subsection{Proof of Proposition \ref{nE}.}
\begin{proof}[proof of Proposition \ref{nE}]
	By Lemma \ref{cr0} and \ref{cr2}, we can see
	\begin{equation}\label{pt0}
		|\chi(t)u_0|_{H^{s-1,1}_\theta} \lesssim C( \|f\|_{H^{s-1,1}}+ \|g\|_{H^{s-1,0}} ),
	\end{equation}
	where $C=\| \chi \|_{H^\theta}+\| t \chi \|_{H^\theta} +\| \chi' \|_{H^\theta}+\| t\chi' \|_{H^\theta}$. Using \eqref{defu0}, we can verify that
	\begin{equation*}\label{pt1}
		\mathrm{supp} \widetilde{F_1} \subseteq 4\sqrt{2} \mathcal{N}, \quad \mathrm{supp} \widetilde{F_2} \subseteq \mathbb{R}^{1+n} \backslash \mathcal{N}.
	\end{equation*}
	Define 
	\begin{equation*}
		\chi_T(t)=\chi(\frac{t}{T}),\qquad t\in\mathbb{R}.
	\end{equation*}
	We therefore obtain
	\begin{equation}\label{pt2}
		||\tau|-|\xi_1|| \leq 2\sqrt{2} T^{-\frac12}, \quad \mathrm{for} \ (\tau,\xi)\in \mathrm{supp} \widetilde{F_1}.
	\end{equation}
	Substituting \eqref{pt2} into \eqref{lwh}, we can choose
	\begin{equation}\label{pt4}
		c_0=2+2\sqrt{2} T^{-\frac12}.
	\end{equation}
	By \eqref{op0} in Lemma \ref{oE}, it follows that
	\begin{equation}\label{pt5}
		\begin{split}
			| \chi_{T} u_1 |_{H^{s-1,1}_\theta} \leq \bar{C}_T \|F_1\|_{H^{s-1,0}_0},
		\end{split}
	\end{equation}
	where
	\begin{equation}\label{CTb}
		\begin{split}
			\bar{C}_T \simeq \ & (3c_0)^{\frac12} \left( \|\chi_T\|_{\dot{H}^{\theta-1}}+\|t\chi_T\|_{H^{\theta}}+\|t\chi_T'\|_{\dot{H}^{\theta-1}}+\|t^2\chi_T'\|_{H^{\theta}}  \right)
			\\
			& +\sum^\infty_{j=1} \big( \frac{(3c_0)^{j+\frac12}}{j!} \| t^{j+1}\chi_T \|_{H^\theta}+ \frac{(3c_0)^{j+\frac12+\theta}}{j!} \| t^{j+1}\chi_T \|_{L^2} 
			\\
			& \quad +\frac{(3c_0)^{j-\frac12}}{j!} \| t^{j+1}\chi_T' \|_{H^\theta}
			+ \frac{(3c_0)^{j-\frac12+\theta}}{j!} \| t^{j+1}\chi'_T \|_{L^2} 
			+ \frac{(3c_0)^{j-\frac12}}{j!} \| t^{j}\chi_T \|_{H^\theta} \big).
		\end{split}
	\end{equation}
	By \eqref{pt2} and \eqref{pt4}, we have
	\begin{equation}\label{pt7}
		\begin{split}
			\|F_1\|_{H^{s-1,0}_0} \leq c_0^{1-(\theta+\varepsilon)} \|F_1\|_{H^{s-1,0}_{\theta+\varepsilon-1}}\leq (3c_0)^{1-\theta}\|F_1\|_{H^{s-1,0}_{\theta+\varepsilon-1}}.
		\end{split}
	\end{equation}
	Combining \eqref{pt5} and \eqref{pt7} yields
	\begin{equation}\label{ptqw}
		\begin{split}
			| \chi_{T} u_1 |_{H^{s-1,1}_\theta} \leq {C}_T \|F_1\|_{H^{s-1,0}_{\theta\textcolor{red}{+\varepsilon}-1}},
		\end{split}
	\end{equation}
	where 
	\begin{equation}\label{pt9}
		\begin{split}
			C_T=\bar{C}_T(3c_0)^{1-\theta}.
		\end{split}
	\end{equation}
	Due to $\theta \in (\frac12,1)$ and $0<T\leq 1$, shows that
	\begin{equation}\label{pt10}
		\begin{split}
			\| \chi_T\|_{H^\theta} \lesssim & T^{\frac12-\theta} \|\chi\|_{H^\theta},
			\\
			\| \chi_T\|_{\dot{H}^{\theta-1}} = & T^{\frac32-\theta} \|\chi\|_{\dot{H}^{\theta-1}}.
		\end{split}
	\end{equation}
Inserting \eqref{pt10} into \eqref{pt9} and using \eqref{CTb} yields
	\begin{equation}\label{pt12}
		\begin{split}
			{C}_T \lesssim \ & (3c_0 T)^{\frac32-\theta} \left( \|\chi\|_{\dot{H}^{\theta-1}}+\|t\chi\|_{H^{\theta}}+\|t\chi'\|_{\dot{H}^{\theta-1}}+\|t^2 \chi'\|_{H^{\theta}}  \right)
			\\
			& +\sum^\infty_{j=1} \big( \frac{(3c_0T)^{j+\frac32-\theta}}{j!} \| t^{j+1}\chi \|_{H^\theta}+ \frac{(3c_0T)^{j+\frac32}}{j!} \| t^{j+1}\chi\|_{L^2} 
			\\
			& \quad +\frac{(3c_0T)^{j+\frac12-\theta}}{j!} \| t^{j+1}\chi' \|_{H^\theta}
			+ \frac{(3c_0T)^{j+\frac12}}{j!} \| t^{j+1}\chi' \|_{L^2} 
			+ \frac{(3c_0T)^{j+\frac12-\theta}}{j!} \| t^{j}\chi \|_{H^\theta} \big).
		\end{split}
	\end{equation}
	Observing \eqref{pt4} and noting that $0<T<1$, we derive that
	\begin{equation}\label{pt14}
		3c_0 \leq (6+6\sqrt{2})T^{-\frac12} .
	\end{equation}
	Combining \eqref{pt12} and \eqref{pt14}, we conclude that
	\begin{equation}\label{pt15}
		C_T \lesssim C_* T^{\frac14},
	\end{equation}
	where
	\begin{equation}\label{pt17}
		\begin{split}
			{C}_* =\ & (6+6\sqrt{2})(\|\chi\|_{\dot{H}^{\theta-1}}+\|t\chi\|_{H^{\theta}}+\|t\chi'\|_{\dot{H}^{\theta-1}}+\|t^2 \chi'\|_{H^{\theta}} )
			\\
			& +\sum^\infty_{j=1} \frac{(6+6\sqrt{2})^{j+2}}{j!} \left(  \| t^{j+1}\chi \|_{H^\theta}+ \| t^{j+1}\chi'\|_{H^\theta} + \| t^{j}\chi \|_{H^\theta}  \right)  .
		\end{split}
	\end{equation}
	For $\chi$ satisfying \eqref{chi}, this implies that $C_*$ is a bounded constant depending only on $\chi$ and $\theta$. Summing up our results \eqref{ptqw}, \eqref{pt15}, and \eqref{pt17}, we obtain
	\begin{equation}\label{pt18}
		\begin{split}
			| \chi_{T} u_1 |_{H^{s-1,1}_\theta} \leq {C}_* T^{\frac14} \|F_1\|_{H^{s-1,0}_{\theta+\varepsilon-1}} \textcolor{red}{.}
		\end{split}
	\end{equation}
	On the other hand, if 
	\begin{equation*}
		||\tau|-|\xi_1-1||> T^{-\frac12}, \quad (\tau,\xi)\in \mathrm{supp}\widetilde{F_2},
	\end{equation*}
	we therefore get
	\begin{equation}\label{pt20}
		\begin{split}
			| \chi_{T} u_2 |_{H^{s-1,1}_\theta} \lesssim T^{\frac{\varepsilon}{2}} \|F_2\|_{H^{s-1,0}_{\theta+\varepsilon-1}},
		\end{split}
	\end{equation}
	Adding \eqref{pt0}, \eqref{pt18} and \eqref{pt20}, we have proved \eqref{none1}. Due to the representation of solutions for linear waves, the function \(u\) defined in \eqref{defu} satisfies the Cauchy problem \eqref{linearw} for \((t,\by) \in [0,T] \times \mathbb{R}^n\). By standard energy estimates, $u$ is the unique solution of \eqref{linearw}. Therefore, the proof of Proposition \ref{nE} is complete.
\end{proof}

\section*{Acknowledgments} 
The author gratefully acknowledges the referee for their insightful comments and valuable suggestions, which have significantly improved the quality of the manuscript. The author also extends sincere thanks to Yi Zhou and Yuan Cai for their helpful discussions and feedback. This work is supported by the Natural Science Foundation of 
Hunan Province(Grant No. 2025JJ40003), and the Fundamental Research Funds for the Central Universities(Grant No. 531118010867).

\section*{Conflicts of interest and Data Availability Statements}
The author declared that this work does not have any conflicts of interest. The author also confirms that the data supporting the findings of this study are available within the article.

\end{document}